\documentclass[11pt]{amsart}

\usepackage[arrow,curve,matrix,tips,2cell]{xy}
\usepackage{amssymb,amsthm,amsmath}
\usepackage{hyperref}
\usepackage[alphabetic]{amsrefs}
\usepackage[shortlabels]{enumitem}
\usepackage[margin=3cm]{geometry}
\usepackage{microtype}

\newtheorem{theorem}{Theorem}[section]
\newtheorem{corollary}[theorem]{Corollary}
\newtheorem{lemma}[theorem]{Lemma}
\newtheorem{proposition}[theorem]{Proposition}

\theoremstyle{remark}
\newtheorem{remark}[theorem]{Remark}

\newtheorem{example}[theorem]{Example}

\numberwithin{equation}{section}

\newcommand{\N}{\mathbb{N}}

\newcommand{\Z}{\mathbb{Z}}
\newcommand{\R}{\mathbb{R}}
\newcommand{\C}{\mathbb{C}}
\newcommand{\K}{\mathcal{K}}
\newcommand{\Cu}{\mathrm{Cu}}
\newcommand{\Cusim}{\mathrm{Cu}^\sim}
\newcommand{\cc}{\mathrm{cc}}
\newcommand{\co}{\mathrm{c}}

\newcommand{\ov}[1]{\overline{#1}}
\renewcommand{\epsilon}{\varepsilon}
\renewcommand{\leq}{\leqslant}
\renewcommand{\geq}{\geqslant}
\DeclareMathOperator{\rank}{rank}

\title{A revised augmented Cuntz semigroup}

\author
[Robert]{Leonel Robert}
\address{Leonel Robert
\\Department of Mathematics
\\University of Louisiana at Lafayette
\\Lafayette, 70504-3568
\\USA
} \email{lrobert@louisiana.edu}

\author
[Santiago]{Luis Santiago}
\address{Luis Santiago
\\Department of Mathematical Sciences
\\Lakehead University
\\Thunder Bay
\\Canada
}\email{lsantiag@lakehead.ca}

\begin{document}
\begin{abstract}
We revise the construction of the augmented Cuntz semigroup functor used by the first author to classify inductive limits of 1-dimensional noncommutative CW complexes. The original construction has good functorial properties when restricted to the class of C*-algebras of stable rank one. The construction proposed here has good properties for all C*-algebras: We show that the augmented Cuntz semigroup is  a stable, continuous, split exact functor, from the category of C*-algebras to the category of Cu-semigroups.  
\end{abstract}

\maketitle

\section{Introduction}
The Elliott classification program   purports to distinguish ``well behaved" simple nuclear separable C*-algebras up to isomorphism relying on essentially two kinds of data: K-groups and the cone of traces. This classification program has recently come almost to completion, culminating decades of research. In these recent developments the Cuntz semigroup plays  a key role  as a tool to define and exploit the regularity properties of the classifiable C*-algebras. When looking into classifying non-simple C*-algebras, the Cuntz semigroup  itself becomes a good candidate for a classification invariant. One limitation of the Cuntz semigroup, however, is that it fails to capture the $K_0$-group for non-unital C*-algebras. To remedy this, a variation on the Cuntz semigroup was introduced in \cite{Robert}. This ordered semigroup, which we call here the \emph{augmented Cuntz semigroup}, is built out of the the Cuntz semigroup of the unitization of the C*-algebra,  resembling the way in which the $K_0$-group is built out of the monoid of Murray-von Neumann classes of projections of the unitization of the C*-algebra.  The  augmented Cuntz semigroup was used in \cite{Robert} to classify inductive limits of 1-dimensional noncommutative CW complexes.

A shortcoming of the construction of the augmented Cuntz semigroup given in \cite{Robert} is that it is only a well behaved functor on the class of stable rank one C*-algebras. In the present paper we address this issue by revising the definition of this functor.   The new construction agrees with the old one for C*-algebras of stable rank one (and C*-algebras of finite stable rank). But we are now able to establish the basic properties of the augmented Cuntz semigroup in greater generality.  

Let $\Cusim$ denote the augmented Cuntz semigroup functor. We show that 
\begin{enumerate}
\item	
$\Cusim$ is a functor from the category of C*-algebra to the category of Cu-semigroups (Theorem \ref{CusimCu}),
\item
$\Cusim$ preserves inductive limits (Theorem \ref{Cusimlimits}),
\item
$\Cusim$ is stable and split exact (Theorems \ref{isoembed} and \ref{splitexact}).
\end{enumerate}	
Further, we calculate  $\Cusim(A)$  for $A$ simple, separable, and  pure (Theorem \ref{simpleCusim}). Again, we do this without assuming that $A$ has stable rank one. Under more restrictive hypotheses, although also circumventing the stable rank one hypothesis, this calculation is obtained in \cite[Appendix A]{Linetal}.

This paper is organized as follows: Section \ref{prelims} is devoted to preliminaries on the Cuntz semigroups of C*-algebras and their abstract counterparts, Cu-semigroups. In Section \ref{ccconstruction} we describe a ``formal differences" construction for Cu-semigroups, which we use to define $\Cusim(A)$. In Section \ref{Cusim} we define $\Cusim(A)$ and show, among other results, that it is a Cu-semigroup. In Section \ref{Cusimfunctor} we prove the functorial properties of $\Cusim$ mentioned above. In Section \ref{K0andtraces} we show that if $A$ is stably finite then $K_0(A)$ and the cone of densely finite 2-quasitraces on $A$ can be read off of $\Cusim(A)$. Further, if $A$ is simple and pure, $\Cusim(A)$ can be calculated in terms of $K_0(A)$, the cone of densely finite 2-quasitraces on $A$, and the pairing between these two invariants.

\section{Preliminaries}\label{prelims}

\subsection{The Cuntz semigroup}
Here we briefly recall some facts on the Cuntz semigroup of a C*-algebra and on the category of abstract $\Cu$-semigroups. The reader is referred to \cite{Antoine-Perera-Thiel} and references therein for more on this topic.

Let $A$ be a C*-algebra. Let $A_+$ denote the set of positive elements of $A$. Given $a,b\in A_+$ we say that $a$ is Cuntz smaller than $b$, and denote this by $a\precsim_{\Cu} b$, if there exist
$x_1,x_2,\ldots$ in  $\overline{bA}$  such that $x_n^*x_n\to a$.
We say that $a$ is Cuntz equivalent to $b$, and denote this by $a\sim_{\Cu} b$, if $a\precsim_{\Cu} b$ and $b\precsim_{\Cu} a$. Given $a\in A_+$ we denote by $[a]$ the Cuntz equivalence class of $a$.

Let us denote by $\Cu(A)$ the set $(A\otimes\K)_+/\!\!\!\sim_{\Cu}$ of Cuntz equivalence classes of positive elements of $A\otimes\K$.  This set becomes an ordered semigroup when it is endowed with the order
\[
[a]\le [b]\text{ if } a\precsim_{\Cu} b,
\]
and with the addition operation
\[
[a]+[b]=[a'+b'],
\]
where $a',b'\in (A\otimes\K)_+$ are orthogonal elements that are Cuntz equivalent to $a$ and $b$, respectively (the existence of $a'$ and $b'$ is guaranteed by the stability of $A\otimes\mathcal K$). We call $\Cu(A)$ the Cuntz semigroup of the C*-algebra $A$.

Let $\phi\colon A\to B$ be a C*-algebra homomorphism. Then $\Cu(\phi)\colon \Cu(A)\to \Cu(B)$ is defined as $\Cu(\phi)([a])=[(\phi\otimes \mathrm{id}_{\K})(a)]$ for all $a\in (A\otimes \K)_+$, where  $\mathrm{id}_{\K}\colon \K\to \K$ denotes the identity map. This is a morphism of ordered semigroups.

By \cite[Theorems 1]{Coward-Elliott-Ivanescu}, the assignments $A\mapsto \Cu(A)$ and $\phi\mapsto \Cu(\phi)$ define  a functor from the category of C*-algebras to a certain category  of ordered semigroups which we recall next.

\subsection{$\Cu$-semigroups}\label{Cuprelims}
Let $S$ be an ordered set such that every increasing sequence has a supremum. Given $x,y\in S$ we write $x\ll y$
if whenever $y\leq \sup_n y_n$ for some increasing sequence $(y_n)_n$, then there exists $n_0\in \N$ such that $x\leq y_{n_0}$.  We say then that $x$ is compactly contained in $y$, or way below $y$. An element $e\in S$ is called compact if $e\ll e$. 

Suppose now that  $S$ is an ordered semigroup with neutral element 0. Consider the following properties on $S$:
\begin{description}
\item[O0] 0 is a compact element
\item[O1] Every increasing sequence in $S$ has a supremum.
\item[O2] For each $x\in S$ there exists an $\ll$-increasing sequence 	$(x_n)_n$ such that $x=\sup x_n$.
\item[O3] If $(x_n)_n$ and $(y_n)_n$ are increasing sequences then $\sup x_n+y_n=\sup x_n+\sup y_n$
\item[O4] If $x_1\ll y_1$ and $x_2\ll y_2$ then $x_1+x_2\ll y_1+y_2$.
\end{description}	
In the literature  (\cites{Antoine-Perera-Thiel, Coward-Elliott-Ivanescu}) a Cu-semigroup is typically understood to be a positively ordered semigroup (i.e., one for which $0\leq x$ for all $x\in S$) satisfying  O1-O4. Notice that such groups automatically satisfy O0. We will consider here ordered semigroups that are not necessarily positively ordered.  Instead, by a Cu-semigroup we  shall understand an ordered semigroup that satisfies axioms O0-O4.

By a Cu-morphism we understand a map $\alpha\colon S\to T$ between Cu-semigroups that is an ordered semigroup  morphism (preserves order and addition), preserves  neutral elements, the suprema of increasing sequences,  and the compact containment.  These last two properties mean that
\begin{description}
\item[M1] if $(x_n)_{n=1}^\infty$ is an increasing sequence in $S$  then $\alpha(\sup x_n)=\sup\alpha(x_n)$,
\item[M2] if $x,y\in S$ are such that $x\ll y$ then $\alpha(x)\ll \alpha(y)$.
\end{description}

The category $\Cu$ is defined as the category of Cu-semigroups with Cu-morphisms. If $A$ is a C*-algebra then $\Cu(A)$ is a Cu-semigroup (\cite[Theorem 1]{Coward-Elliott-Ivanescu}).   
By \cite[Theorem 2]{Coward-Elliott-Ivanescu},   $\Cu(\cdot)$ is a functor from the category of C*-algebras to the category $\Cu$.  Moreover, this functor preserves inductive limits.

We will make use below of an additional axiom satisfied by the Cuntz semigroup of a C*-algebra:
\begin{description}
\item[O5] For all $x'\ll x\leq y$ and $w'\ll w$ such that $x+w\leq y$ there exists $z$ such that $x'+z\leq y\leq x+z$
and $w'\ll z$. 		
\end{description}
We some times refer to O5 as the ``almost algebraic order" axiom. That  $\Cu(A)$ satisfies O5 as stated above is proven in \cite{Antoine-Perera-Thiel}. A slightly weaker version, which does not seem to suffice for our purposes, is proven in \cite{Rordam-Winter}.

We call a positive  element $z\in S$ full if $\infty\cdot z:=\sup_n nz$ is the largest element of $S$.

\section{The $S_{\cc}$ construction}\label{ccconstruction}
Throughout this section  $S$ is a positively ordered Cu-semigroup that satisfies the almost algebraic order axiom O5.

Recall that an element $e\in S$ is called compact if $e\ll e$.
We denote the subsemigroup of compact elements of $S$ by  $S_{\co}$, i.e., $S_{\co}=\{e\in S:e\ll e\}$.
Let us  define on $S\times S_c$ a relation as follows: 
$(x,e)\precsim_{\cc} (y,f)$ if for all $x'\ll x$ there exists $g\in S_{\co}$
such that 
\[
x'+f+g\ll y+e+g.
\]
Since  $y$ is the supremum of a $\ll$-increasing sequence,  once this inequality holds we can then choose $y'\ll y$ such that $x'+f+g\leq y'+e+g$. Thus, $(x,e)\precsim_{\cc} (y,f)$ if and only if for all $x'\ll x$ 
there exist $y'\ll y$ and $g\in S_{\co}$ such that $x'+f+g\leq y'+e+g$. Equipped with this observation it is straightforward to check that the relation
$\precsim_{\cc}$ is transitive. Another observation  easily checked, and which we will use frequently,   is that if $x'\precsim_{\cc}y$ for all $x'\ll x$ then 
$x\precsim_{\cc}y$.

By anti-symmetrizing $\precsim_{\cc}$
we obtain an equivalence relation: we write
$(x,e)\sim_{\cc} (y,f)$ if $(x,e)\precsim_{\cc} (y,f)$ and $(y,f)\precsim_{\cc}(x,e)$.
We denote the equivalence class of $(x,e)\in S\times S_{\co}$ by $\overline{(x,e)}$.

Let $S_{\cc}=(S\times S_{\co})/\!\!\sim_{\cc}$. 
Let us endow $S_{\cc}$ with an order and an addition operation. 
The order on $S_{\cc}$ is the one induced by the pre-order $\precsim_{\cc}$:  
\[
\overline{(x,e)}\leq \overline{(y,f)}\hbox{ if $(x,e)\precsim_{\cc} (y,f)$.}
\]
Addition in $S_{\cc}$ is defined in the obvious way:
\[
\overline{(x,e)}+\overline{(y,e)}:=\overline{(x+y,e+f)}.
\]
It is straightforward to check that addition is well defined and compatible with the order. Thus, $S_{\cc}$ is an ordered monoid with neutral element $\overline{(0,0)}$.

Given $x\in S$ let us denote $\ov{(x,0)}$---the equivalence class of $(x,0)$---simply by $\overline{x}$. Let us denote the neutral element $\ov{(0,0)}$ by $0$. 
If $e$ is a compact element then $\overline{e}+\overline{(0,e)}=0$, i.e., $\overline{(0,e)}$ is the additive inverse of $\overline{e}$. We can thus write
$\overline{(x,e)}$ as $\overline x-\overline{e}$. This is the form in which we typically write  the elements of $S_{\cc}$:
\[
S_{\cc}=\{\overline x-\overline e:x\in S,\, e\in S_{\co}\}.
\]

Suppose that $e\in S_{\co}$ is an order unit of the semigroup $S_{\co}$. That is,  for any $f\in S_{\co}$ we have $f\leq ne$ for some $n\in \N$.
If $f\leq ne$, with $f,e$ compact, then  $f+f'=ne$ for some $f'\in S_{\co}$ (a consequence of the almost algebraic order axiom O5). 
It follows that  every element of $S_{\cc}$ can be expressed in the form $\ov{x}-n\ov{e}$ for some $x\in S$ and $n\in \N$.
That is,
\[
S_{\cc}=\{\ov{x}-n\ov{e}: x\in S,\, n\in \N\}.
\]

\begin{example} \label{Ncc}
If $S=\overline{\N}:=\N\cup\{\infty\}$ then $S_{\cc}\cong \ov{\Z}:= \Z\cup \{\infty\}$.
\end{example}

\begin{lemma}\label{ontopositives}
The map $x\mapsto \overline{x}$ from $S$ to $S_{\cc}$  is a surjection onto the set of  positive elements of $S_{\cc}$.
\end{lemma}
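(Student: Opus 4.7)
The lemma has two parts: that $\ov{x}$ always lies in the positive cone of $S_{\cc}$, and that every positive element of $S_{\cc}$ is of this form. The first is a direct unwinding of definitions, and the second I would obtain via two successive applications of the almost algebraic order axiom O5.

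For the first assertion, I would check $\ov{(0,0)}\leq\ov{(x,0)}$ by taking $x'=0$ in the definition of $\precsim_{\cc}$ with witness $g=0$; this reduces to verifying $0\ll x$. Since $S$ is positively ordered, $0\leq y_1$ for the first term of any increasing sequence, so $0\ll y$ holds for every $y\in S$. This costs essentially nothing.

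For the surjectivity onto positives, suppose $\ov{(y,f)}\geq 0$, so $(0,0)\precsim_{\cc}(y,f)$. Unwinding with $x'=0$ yields $g\in S_{\co}$ with $f+g\ll y+g$, in particular $f+g\leq y+g$. My plan is to use O5 twice to manufacture an element $z_1\in S$ that will serve as the formal ``difference'' $y-f$. First, apply O5 with $x'=x=f$ (compact), $w'=w=g$ (compact), and $y_0=y+g$, noting $f\leq y+g$ and $f+g\leq y+g$. This produces $z\in S$ with $f+z\leq y+g\leq f+z$, so $f+z=y+g$, and with $g\ll z$. Second, apply O5 with $x'=x=g$, $w'=w=0$, and $y_0=z$, using $g\leq z$; this yields $z_1\in S$ with $g+z_1=z$. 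Combining, $f+g+z_1=y+g$ as an exact equality in $S$.

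With $z_1$ in hand I would verify $(y,f)\sim_{\cc}(z_1,0)$, so that $\ov{(y,f)}=\ov{z_1}$. For $(y,f)\precsim_{\cc}(z_1,0)$: given $y'\ll y$, axiom O4 gives $y'+g\ll y+g=f+g+z_1$, and since $z_1$ is the supremum of a $\ll$-increasing sequence, there is $z_1'\ll z_1$ with $y'+g\leq f+g+z_1'$; the witness is $h=g$. For $(z_1,0)\precsim_{\cc}(y,f)$: given $z_1'\ll z_1$, O4 gives $z_1'+f+g\ll z_1+f+g=y+g$, and O3 together with a $\ll$-increasing approximation of $y$ produces $y'\ll y$ with $z_1'+f+g\leq y'+g$; again $h=g$ works. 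The crux is the double application of O5; after that, the verification is mechanical and uses only O2--O4.
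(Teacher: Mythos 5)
Your proof is correct and follows essentially the same route as the paper: in both, the key step is to use the almost algebraic order axiom O5 to complement the compact part inside $y+g$, obtaining an exact equality $f+g+z_1=y+g$ from which the positive class is read off as $\ov{z_1}$. The paper is just slightly more economical---it complements the compact element in one application of O5 and then identifies $\ov{z_1}=\ov{y}-\ov{f}$ directly from additivity of $x\mapsto\ov{x}$ and invertibility of compact classes in $S_{\cc}$, whereas you use two O5 applications and verify the relation $\sim_{\cc}$ by hand---but these are presentational differences only.
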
	
\begin{proof}
If $0\leq \overline{x}-\overline{e}$ then $e+f\leq x+f$ for some $f\in S_{\co}$.  By O5, $e+f$ is complemented in $x+f$. That is, there exists $x'\in S$ such that $x+f=x'+e+f$. Then $\overline{x'}=\overline{x}-\overline{e}$.	
\end{proof}

\begin{lemma}\label{lemmasups}
If $\ov x\leq \ov y$ and $x'\ll x$ then there exists $z\in S$ such that $x'\ll z$ and $\ov z=\ov y$.	
\end{lemma}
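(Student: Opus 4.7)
Given $\ov x \leq \ov y$ and $x' \ll x$, my plan is to produce $z$ via an application of O5 to a bracketing inequality extracted from the definition of $\precsim_{\cc}$, then to refine the construction by iteration so that the classes in $S_{\cc}$ match exactly.

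First, interpolate $x' \ll x'' \ll x$. By the definition of $\ov x \leq \ov y$ applied to the approximation $x'' \ll x$, there exist $g \in S_{\co}$ and $y'' \ll y$ with $x'' + g \leq y'' + g$. Now apply O5 with ``$x$''$\,=\, x''$, ``$y$''$\,=\, y'' + g$, ``$x'$''$\,=\, x'$, ``$w$''$\,=\, g$, and ``$w'$''$\,=\, g$ (the premise $x'' + g \leq y'' + g$ and the compactness condition $g \ll g$ being exactly the inputs O5 requires). This yields $u \in S$ satisfying
\[
x' + u \leq y'' + g \leq x'' + u, \qquad g \ll u.
\]

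Set $z := x'' + u$. The condition $x' \ll z$ is immediate since $x' \ll x'' \leq z$. The main content is verifying $\ov z = \ov y$. The right-hand bracketing $y'' + g \leq z$ yields, using the invertibility of $\ov g$ in $S_{\cc}$ (which holds since $g \in S_{\co}$, so $\ov g + \ov{(0,g)} = 0$), the inequality $\ov{y''} \leq \ov z$; varying $y''$ over a $\ll$-increasing family with supremum $y$ and using that the assignment $x \mapsto \ov x$ commutes with suprema of increasing sequences gives $\ov y \leq \ov z$. The left-hand bracketing $x' + u \leq y'' + g$ similarly yields $\ov{x'} + \ov u \leq \ov y$; but $\ov z = \ov{x''} + \ov u$ carries an extra $\ov{x''} - \ov{x'}$ term compared to this upper bound.

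The main obstacle is precisely closing this final gap. My plan is to iterate the construction: instead of a single O5 application, build inductively a $\ll$-increasing chain $x' \ll x_1 \ll x_2 \ll \cdots$ with $\sup x_n = x''$, together with corresponding approximations $y_n \ll y$ approaching $y$, producing via O5 a compatible $\ll$-increasing sequence $(u_n)$ in $S$ (the compatibility being arranged by auxiliary O5 applications forcing $u_n \leq u_{n+1}$). Setting $z := \sup_n (x_n + u_n)$, continuity of the assignment $x \mapsto \ov x$ along suprema of increasing sequences (a consequence of axioms O1 and O3, and a property we aim to establish for $\ov\cdot$) then gives $\ov z = \ov y$, while $x' \ll x_1 \leq x_1 + u_1 \leq z$ still gives $x' \ll z$.
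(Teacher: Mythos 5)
Your single application of O5 assigns the roles the wrong way around, and you correctly notice that it cannot close: with $(x',x)=(x',x'')$ and $(w',w)=(g,g)$ the complement $u$ gets attached to $x''$, so $z=x''+u$ only satisfies $\ov{x'}+\ov u\leq \ov y$ while $\ov z=\ov{x''}+\ov u$, and in fact the other direction is also not justified for a fixed $z$ (you only get $\ov{y''}\leq \ov z$ for the one $y''$ produced before $u$ was chosen; you cannot ``vary $y''$'' afterwards since $g$, $u$, $z$ all depend on it). The proposed repair is only a plan, and its crucial step is exactly what is missing: to force $u_n\leq u_{n+1}$ via O5 you would need an inequality of the form $x_{n+1}+u_n\leq y_{n+1}'+g_{n+1}$ as input to the axiom's last clause, which you do not have; and the appeal to ``continuity of $x\mapsto\ov x$ along suprema'' is not available at this point -- in the paper that property (Lemma \ref{ovCumap}) is deduced from Theorem \ref{suprema}, whose proof uses the present lemma, so you would have to establish it independently, which you have not done. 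As written, the argument has a genuine gap.

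The missing idea is to apply O5 with the \emph{compact element} in the $x'\ll x$ slots. From $\ov x\leq \ov y$ and $x'\ll x''\ll x$ one gets $e\in S_{\co}$ with $x''+e\leq y+e$. Now apply O5 to this inequality with $x':=e$, $x:=e$ (legitimate because $e\ll e$), $y:=y+e$, and $(w',w):=(x',x'')$: it yields $z$ with $e+z\leq y+e\leq e+z$, i.e.\ $z+e=y+e$, and $x'\ll z$. Since $e$ is compact, $z+e=y+e$ gives $\ov z=\ov y$ on the nose (for $z'\ll z$ one has $z'+e\leq y+e$, and for $y'\ll y$ one has $y'+e\leq z+e$), so no iteration, no passage to suprema, and no continuity of $\ov{\,\cdot\,}$ is needed. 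In short: the complement should be taken of $e$ inside $y+e$, chosen way above $x'$, not of a piece of $x$.
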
	
\begin{proof}
Choose $x''$ such that $x'\ll x''\ll x$. Since $\ov x\leq \ov y$, there exists $e\in S_{\co}$ such that $x''+e\leq y+e$. By O5, $e$ is complemented in $y+e$ and this complement may be chosen way above $x'$. That is, there exists $z$ such that $z+e=y+e$ and $x'\ll z$. Thus, $z$ is as desired.	
\end{proof}

\begin{theorem}\label{suprema}
Every increasing sequence in  $S_{\cc}$ has a supremum. Moreover, if $(x_n)_{n=1}^\infty$ is an increasing sequence	of positive elements in $S_{\cc}$ then we can choose an increasing sequence $(z_n)_{n=1}^\infty$ in $S$  such that 	$\ov{z_n}\leq x_n$ for all $n$  and $\sup_n x_n=\ov {\sup_n z_n}$.
\end{theorem}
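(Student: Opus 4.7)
\textbf{Reduction to positive sequences.} My first move is to reduce to the positive case. For any compact $e \in S_{\co}$, the class $\ov{e}$ has additive inverse $\ov{(0,e)}$ in $S_{\cc}$, so translation by $\ov{e}$ is an order isomorphism. Given an arbitrary increasing $(u_n)$, write $u_1 = \ov{a_1} - \ov{e_1}$; then the translated sequence $(u_n + \ov{e_1})_n$ is increasing with minimum $u_1 + \ov{e_1} = \ov{a_1}\geq 0$, hence consists of positive elements. Existence of a supremum for $(u_n)$ thus reduces to the ``moreover'' statement, and I focus on that.

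\textbf{Constructing the sequence $(z_n)$.} I would write $x_n = \ov{y_n}$ (Lemma \ref{ontopositives}) and fix $\ll$-increasing sequences $y_{n,k} \nearrow y_n$ in $S$. Enumerating the pairs $(n,k) \in \N\times\N$ as $(p_m,q_m)_{m\geq 1}$ with $p_m\leq m$ (a diagonal enumeration), I would construct by induction on $m$ elements $z_m\in S$ and auxiliary $\tilde y_m\in S$ such that
\begin{enumerate}
\item[(a)] $\ov{\tilde y_m} = x_m$ and $z_{m-1}\leq z_m \ll \tilde y_m$,
\item[(b)] $y_{p_m, q_m} + g_m \leq z_m + g_m$ for some $g_m \in S_{\co}$.
\end{enumerate}
For the inductive step: apply Lemma \ref{lemmasups} to $\ov{\tilde y_m} = x_m \leq x_{m+1} = \ov{y_{m+1}}$ and $z_m \ll \tilde y_m$ to obtain $\tilde y_{m+1}$ with $z_m \ll \tilde y_{m+1}$ and $\ov{\tilde y_{m+1}} = x_{m+1}$. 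To handle the pair $(p_{m+1}, q_{m+1})$, note that $y_{p_{m+1},q_{m+1}} \ll y_{p_{m+1}, q_{m+1}+1}$ and $\ov{y_{p_{m+1}, q_{m+1}+1}} \leq x_{p_{m+1}} \leq x_{m+1} = \ov{\tilde y_{m+1}}$, so the definition of $\precsim_{\cc}$ furnishes $g_{m+1}\in S_{\co}$ and $w \ll \tilde y_{m+1}$ with $y_{p_{m+1}, q_{m+1}} + g_{m+1} \leq w + g_{m+1}$. I then take $z_{m+1}$ to be a $\ll$-approximant of $\tilde y_{m+1}$ dominating both $z_m$ and $w$ (possible since both are $\ll \tilde y_{m+1}$).

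\textbf{Identifying the supremum.} Setting $z := \sup_m z_m$ in $S$ (which exists by O1), I claim $\ov z = \sup_n x_n$ in $S_{\cc}$. For any $y' \ll y_n$ there exists $k$ with $y' \leq y_{n,k}$, and then some $m$ with $(p_m,q_m) = (n,k)$, so (b) gives $y' + g_m \leq z + g_m$, i.e.\ $\ov{y'} \leq \ov z$. The observation in Section \ref{ccconstruction} that ``$x'\precsim_{\cc} z$ for all $x'\ll x$ implies $x\precsim_{\cc} z$'' then upgrades this to $x_n = \ov{y_n}\leq \ov z$. Conversely, if $v$ is any upper bound of $(x_n)$, then $\ov{z_m}\leq v$ for all $m$; given $z'\ll z$, I would interpolate $z'\ll z''\ll z$ via O2, observe $z''\leq z_N$ hence $z' \ll z_N$ for some $N$, and unpack $\ov{z_N}\leq v$ on the $\ll$-approximant $z'$ to conclude $\ov z\leq v$.

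\textbf{Main obstacle.} The crux is the inductive construction: one must simultaneously preserve $z_m \ll \tilde y_m$ (so that Lemma \ref{lemmasups} iterates cleanly), enforce $z_m\leq z_{m+1}$ in $S$, and absorb the enumerated pair at each stage. The essential trick is to apply the definition of $\precsim_{\cc}$ to $y_{p_{m+1}, q_{m+1}+1}$ rather than to $y_{p_{m+1}, q_{m+1}}$ itself, exploiting the $\ll$-increasing structure of the approximants to produce a single compact witness $g_{m+1}$ that absorbs the full element $y_{p_{m+1}, q_{m+1}}$ (and not merely its $\ll$-approximants). The remaining verifications --- $\ll$-interpolation and the identification of $\ov z$ as the supremum --- are routine given O2 and the observations already recorded in Section \ref{ccconstruction}.
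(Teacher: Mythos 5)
Your proposal is correct and follows essentially the same route as the paper's proof: reduce to positive sequences by translation, lift to $S$ via Lemma \ref{ontopositives}, build an increasing sequence in $S$ recursively using Lemma \ref{lemmasups} together with the definition of $\precsim_{\cc}$ applied to a higher $\ll$-approximant (the same ``absorption'' trick the paper uses), and then verify that $\ov z$ is the supremum. The only differences are bookkeeping: the paper absorbs a whole batch of approximants $y_{k,n}$ per step and keeps $(z_n)$ $\ll$-increasing, whereas you enumerate pairs diagonally and keep $(z_m)$ merely $\leq$-increasing, compensating with the interpolation $z'\ll z''\ll z$ in the final upper-bound check, which you supply correctly.
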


\begin{proof}
Let $(x_n)_{n=1}^\infty$ be an increasing sequence in $S_{\cc}$.
Given $e\in S_{\co}$, the translation map  $x\mapsto x+\ov e$ is an ordered set isomorphism of $S_{\cc}$. Thus,  applying one such translation
we may assume that $x_1\geq 0$. Then, by Lemma \ref{ontopositives}, for each $n$ we can choose $y_n\in S$ such that $x_n=\ov y_n$.

We construct recursively sequences $(z_n)_n$ and $(z_n')_n$ in $S$ satisfying that 
\begin{enumerate}[(1)]
\item
$\ov z_n'=\ov y_n$ for all $n$,
\item
$z_{n}\ll z_{n+1}\ll z_{n}'$ for all $n$, 

\item
if $y\ll y_m$ for some $m$ then $\ov y\leq \ov z_n$ for some $n$.
\end{enumerate}
Let us first choose for each $n$ a $\ll$-increasing sequence $(y_{n,k})_k\subseteq S$ such that $y_n=\sup_k y_{n,k}$. 
Set $z_1=0$ and  $z_1'=y_1$. 	
Assume that $z_1,z_1',\ldots,z_{n},z_{n}'$ have already been chosen. Since $\ov {y_{n-1}}\leq \ov y_n=\ov {z_n'}$, there exists $e\in S_c$ such that $y_{n-1,n+1}+e\leq z_n'+e$. We can thus choose $z_{n+1}\ll z_{n}'$ such that $y_{n-1,n}+e\leq z_{n+1}+e$. Hence, $\ov {y_{n-1,n}}\leq \ov{z_{n+1}}$. Moreover, increasing $z_{n+1}$ along an $\ll$-increasing sequence with supremum $z_n'$,  we can arrange that $\ov {y_{k,n}}\leq \ov{z_{n+1}}$ for $k=1,\ldots,n-1$ and also that $z_n\ll z_{n+1}$ (since $z_n\ll z_n'$). 
Now, using  that $\ov {z_n'}=\ov y_n\leq \ov {y}_{n+1}$ and Lemma \ref{lemmasups}, we choose $z_{n+1}'\in S$ such that $z_{n+1}\ll z_{n+1}'$ and $\ov {z_{n+1}'}=\ov y_{n+1}$.  We continue this process ad infinitum.
The sequences $(z_n)_n$
and $(z_n')_n$ obtained in this way satisfy (1), (2), and (3), above.

Observe that $(z_n)_n$ is increasing.
Let $z=\sup z_n$. Let us show that $\ov z=\sup \ov y_n$. Fix $y_m$. Let $y\ll y_m$. Then
$\ov y\leq \ov{z_{n}}$ for some $n$ ans so $\ov y\leq \ov{z}$.
Since this holds for all $y\ll y_m$, we conclude that $\ov{y_m}\leq \ov{z}$.
Thus, $\ov z$ is an upper  bound for the sequence $(\ov y_n)_n$. Suppose that $w\in S$ is such that
$\ov y_n\leq \ov w$ for all $n$. Choose $z'\ll z$. Then $z'\ll z_n\leq z_n'\sim_{\cc}y_n$ for some $n$.  
Hence, $z'\precsim_{\cc} w$. Since $z'\ll z$ is arbitrary, it follows that $z\precsim_{\cc} w$, as desired.
\end{proof}

\begin{lemma}\label{ovCumap}
The map $x\mapsto \overline{x}$ preserves suprema of increasing sequences and the compact containment relation.
\end{lemma}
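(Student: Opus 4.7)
The plan is to prove the two properties separately, in both cases using Lemma \ref{ontopositives} and Theorem \ref{suprema} to reduce statements in $S_{\cc}$ to statements in $S$.

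\emph{Suprema.} Let $(x_n)_n$ be increasing in $S$ with supremum $x \in S$. The inequality $\sup_n \overline{x_n} \leq \overline{x}$ is immediate by monotonicity (and the left-hand supremum exists in $S_{\cc}$ by Theorem \ref{suprema}). For the reverse inequality I would invoke the ``moreover'' part of Theorem \ref{suprema} applied to the positive sequence $(\overline{x_n})_n$ to produce an increasing sequence $(z_n)_n$ in $S$ with $\overline{z_n} \leq \overline{x_n}$ and $\sup_n \overline{x_n} = \overline{z}$, where $z = \sup_n z_n$. It then suffices to show $\overline{x} \leq \overline{z}$, and by the closure observation recorded just after the definition of $\precsim_{\cc}$ this reduces to showing $\overline{x'} \leq \overline{z}$ for every $x' \ll x$. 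Any such $x'$ satisfies $x' \leq x_n$ for some $n$ (since $x = \sup_n x_n$ in $S$), so $\overline{x'} \leq \overline{x_n} \leq \overline{z}$.

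\emph{Compact containment.} Suppose $x \ll y$ in $S$, and let $(t_n)_n$ be an increasing sequence in $S_{\cc}$ with $\overline{y} \leq \sup_n t_n$; the goal is to produce $n_0$ with $\overline{x} \leq t_{n_0}$. Writing $t_1 = \overline{a} - \overline{e}$ and translating by $\overline{e}$, I first arrange that every $t_n + \overline{e}$ is positive: since $u \mapsto u + \overline{e}$ is an order automorphism of $S_{\cc}$ (with inverse $u \mapsto u + \overline{(0,e)}$), it preserves suprema, and I obtain $\overline{y+e} \leq \sup_n(t_n + \overline{e})$ with a positive increasing sequence on the right. By the ``moreover'' part of Theorem \ref{suprema}, this supremum equals $\overline{z}$ for some $z = \sup_n z_n \in S$ with $\overline{z_n} \leq t_n + \overline{e}$. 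Hence $\overline{y+e} \leq \overline{z}$, i.e.\ $y+e \precsim_{\cc} z$. Axiom O4 applied to $x \ll y$ and $e \ll e$ gives $x + e \ll y + e$, so the definition of $\precsim_{\cc}$ supplies $g \in S_{\co}$ with $x+e+g \ll z + g = \sup_n(z_n + g)$, and therefore there is $n_0$ with $x+e+g \leq z_{n_0}+g$.

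To finish, I verify $\overline{x+e} \leq \overline{z_{n_0}}$: for any $w' \ll x+e$, O4 gives $w' + g \ll (x+e) + g$, and combined with $(x+e)+g \leq z_{n_0}+g$ this yields $w' + g \ll z_{n_0}+g$, so $x+e \precsim_{\cc} z_{n_0}$. Thus $\overline{x} + \overline{e} = \overline{x+e} \leq \overline{z_{n_0}} \leq t_{n_0} + \overline{e}$, and adding $\overline{(0,e)}$ to both sides cancels $\overline{e}$ and leaves $\overline{x} \leq t_{n_0}$, as required. The only mild subtlety is the shift by $\overline{e}$ needed to bring $(t_n)$ into the range of the positive case of Theorem \ref{suprema}; once this is done, the rest is a routine chase through the $\precsim_{\cc}$ definition, using O4 to transport $\ll$ across addition with the compact element $g$.
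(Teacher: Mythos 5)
Your proof is correct and takes essentially the same route as the paper's: both parts are reduced to statements in $S$ via Theorem \ref{suprema} and the definition of $\precsim_{\cc}$, with O4 used to transport $\ll$ across addition of a compact element. The only (harmless) difference is that you explicitly translate a general increasing sequence in $S_{\cc}$ by $\overline{e}$ to make its terms positive before invoking Theorem \ref{suprema}, a reduction the paper leaves implicit by writing the sequence as $(\overline{y_n})_n$.
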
	

\begin{proof}
Let $(x_n)_n$ be an increasing sequence in $S$	 with  $x=\sup x_n$. Choose  $z\in S$ such that $\ov z=\sup \ov x_n$. Since $\ov{x_n}\leq \ov{x}$ for all $n$, we have that $\ov{z}\leq \ov{x}$. Let us prove the opposite inequality. 
Let $x'\ll x$. Then $x'\ll x_n$ for some $n$.  Since $\ov{x_n}\leq \ov{z}$, there exists $e\in S_c$ such that $x'+e\leq z+e$. This shows that $\ov{x}\leq \ov{z}$, as desired.

Let us now prove preservation of compact containment.	
Let $x', x\in S$ with $x'\ll x$. Suppose that $\ov{x}\leq \sup \ov{y_n}$, where $(\ov {y_n})_n$
is increasing.  By Theorem \ref{suprema}, there exists an increasing sequence
$(z_n)_n$ in $S$  such that $\ov{\sup z_n}=\sup \ov{y_n}$ and $\overline{z_n}\leq \overline{y_n}$ for all $n$. Set $z=\sup z_n$.
Since $\ov x\leq \ov z$,  there exist $z'\ll z$ and $e\in S_c$ such that $x'+e\leq z'+e$.
Then $z'\leq z_{n_0}$ for some $n_0$.
Hence, $\overline{x'}\leq \overline{z_{n_0}}\leq \overline{y_{n_0}}$.   This shows that $\ov {x'}\ll\ov{x}$.
\end{proof}

\begin{theorem}\label{SccCu}
Let $S$ be a positively ordered Cu-semigroup satisfying O5. 	Then the ordered semigroup $S_{\cc}$ defined above is a $\Cu$-semigroup also satisfying O5.
\end{theorem}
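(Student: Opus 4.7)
The plan is to verify axioms O0--O5 for $S_{\cc}$ in turn, relying on three main tools: Theorem~\ref{suprema} (existence of suprema, with a concrete representation by sequences in $S$), Lemma~\ref{ovCumap} ($x\mapsto\overline{x}$ preserves suprema and $\ll$), and the observation (recorded after Lemma~\ref{ontopositives}) that every compact $\overline{e}$ has additive inverse $\overline{(0,e)}$; the latter makes translation by any $\overline{e}-\overline{f}$ with $e,f\in S_{\co}$ an order isomorphism of $S_{\cc}$ that automatically preserves suprema of increasing sequences and the relation $\ll$.

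Axiom O1 is exactly Theorem~\ref{suprema}. For O3, let $(a_n),(b_n)$ be increasing in $S_{\cc}$ with suprema $a,b$; translating by suitable compacts reduces to the case of positive sequences, Theorem~\ref{suprema} realises $a=\overline{z}$, $b=\overline{w}$ with $z=\sup z_n$, $w=\sup w_n$ in $S$ and $\overline{z_n}\leq a_n$, $\overline{w_n}\leq b_n$, and then Lemma~\ref{ovCumap}, additivity of $\overline{\cdot}$, and O3 in $S$ squeeze $(a_n+b_n)$ between the increasing sequence $(\overline{z_n+w_n})$ and the constant $\overline{z+w}$, both with supremum $a+b$. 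Axiom O0 follows since $0\ll 0$ in $S$ and $\overline{\cdot}$ preserves $\ll$. For O2, given $\overline{x}-\overline{e}$, choose a $\ll$-increasing $(x_n)\subseteq S$ with $\sup x_n=x$; then $(\overline{x_n}-\overline{e})$ is $\ll$-increasing and has supremum $\overline{x}-\overline{e}$, both by translation invariance.

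For O4 and O5 the key ingredient is the following refinement, immediate from O2 in $S$ and Lemma~\ref{ovCumap}: if $\overline{u}\ll\overline{v}$ in $S_{\cc}$, there is $v^*\in S$ with $v^*\ll v$ in $S$ and $\overline{u}\leq\overline{v^*}$. For O4, given $a_i\ll b_i$ in $S_{\cc}$, translate by a compact to reduce to $\overline{u_i}\ll\overline{v_i}$ with $u_i,v_i\in S$; the refinement gives $v_i^*\ll v_i$ with $\overline{u_i}\leq\overline{v_i^*}$, and O4 in $S$ combined with Lemma~\ref{ovCumap} yields $\overline{u_1+u_2}\leq\overline{v_1^*+v_2^*}\ll\overline{v_1+v_2}$, as required.

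The main obstacle is O5. Writing the five input elements with a common compact denominator $\overline{E}$ translates the hypotheses to $\overline{u_{a'}}\ll\overline{u_a}\leq\overline{u_b}$, $\overline{u_{c'}}\ll\overline{u_c}$, and $\overline{u_a+u_c}\leq\overline{u_b+E}$ for suitable $u_\bullet\in S$. The refinement produces $x_a^*\ll x_a^{**}\ll u_a$ in $S$ with $\overline{u_{a'}}\leq\overline{x_a^*}$, and similarly $x_c^*\ll x_c^{**}\ll u_c$ with $\overline{u_{c'}}\leq\overline{x_c^*}$. Unpacking $\overline{u_a+u_c}\leq\overline{u_b+E}$ at $w=x_a^{**}+x_c^{**}\ll u_a+u_c$ (via O4 in $S$) produces a compact $g$ with $x_a^{**}+x_c^{**}+g\leq u_b+E+g$ in $S$. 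Applying O5 in $S$ to $x_a^*\ll x_a^{**}$, $x_c^*+g\ll x_c^{**}+g$, and $x_a^{**}+(x_c^{**}+g)\leq u_b+E+g$ yields $\delta\in S$ with $x_a^*+\delta\leq u_b+E+g\leq x_a^{**}+\delta$ and $x_c^*+g\ll\delta$. Setting $d=\overline{\delta}-\overline{g}-\overline{E}\in S_{\cc}$, a routine translation by $\overline{E}+\overline{g}$ confirms $a'+d\leq b\leq a+d$ and $c'\ll d$ (the last using Lemma~\ref{ovCumap}). The main bookkeeping difficulty is tracking the auxiliary compact $g$ produced when the $S_{\cc}$-inequality $\overline{u_a+u_c}\leq\overline{u_b+E}$ is unpacked into an inequality in $S$, and packaging $\delta$, $E$, and $g$ into a single witness in $S_{\cc}$.
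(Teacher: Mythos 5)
Your proof is correct and follows essentially the same route as the paper's: O1 from Theorem \ref{suprema}, O0 and O2--O4 via Lemma \ref{ovCumap} together with translation by (invertible) compacts, and O5 by lifting to $S$, unpacking $\precsim_{\cc}$ to produce the auxiliary compact $g$, applying O5 in $S$, and pushing back down with $x\mapsto\overline{x}$. If anything, your bookkeeping is slightly more explicit than the paper's on two points it glosses over: reducing an arbitrary element $\ll$-below $\overline{x}$ to one of the form $\overline{x^*}$ with $x^*\ll x$, and tracking the extra compact $\overline{E}$ that appears when the hypothesis $a+c\leq b$ is rewritten with a common denominator.
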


\begin{proof}
We have already proven the existence of sequential suprema in $S_{\cc}$.	

Let us prove O2.	
Given $\overline{x}-\ov{e}\in S_{\cc}$, we can choose  an $\ll$-increasing sequence $(x_n)_n$ in $S$  with supremum $x$.
Since,  by the previous lemma,  the map	$z\mapsto \ov z$ is  supremum and $\ll$ preserving, the sequence $(\ov x_n))$ is $\ll$-increasing and has supremum $\ov x$.
Finally, the map $\ov z\mapsto \ov z -\ov e$ is an order isomorphism. Thus, $(\ov{x_n}-\ov{e})$ is $\ll$-increasing and has supremum $\ov{x}-\ov{e}$.

Let us prove O3.	Translating the two sequences in this axiom by a suitable $\ov e$, with $e\in S_{\co}$,
we nay assume that their terms are positive.  Let 
$(\ov  x_n)$ and $(\ov  y_n)$ be increasing sequences of positive elements in $S_{\cc}$. One inequality is clear:
since $\ov x_n+\ov y_n\leq \sup \ov x_n+\sup \ov y_n$ for all $n$,
\[
\sup (\ov x_n+\ov y_n)\leq \sup \ov x_n+\sup \ov y_n.
\] 
Using Theorem \ref{suprema}, choose  an increasing sequences $(z_n)_n$ in $S$
such that $\ov z_n\leq \ov x_n$ and $\ov{\sup z_n}=\sup \ov x_n$. Similarly, choose
$(w_n)$ increasing and such that  and $\ov w_n\leq \ov y_n$ and $\ov{\sup w_n}=\sup \ov y_n$. 
By O3 in $S$ we have that
\[
\sup z_n+\sup w_n=\sup (z_n+w_n).
\]
Passing to $S_{\cc}$ and applying that $z\mapsto \ov{z}$ is supremum preserving on the left side we get
\[
\sup_n \ov x_n+\sup_n \ov y_n=\sup_n (\ov{z_n}+{z_n})\leq \sup_n (\ov x_n+\ov y_n). 
\]

Let us prove O4.	Translating by a suitable $\ov e\in S_{\co}$, we may again assume that all the elements involved 
are positive. Suppose that $\ov x_1\ll \ov x_2$ and $\ov y_1\ll \ov y_2$. Then $\ov x_1\leq \ov x_2'$ for some  $x_2'\ll x_2$
and $\ov y_1\ll \ov y_2'$ for some  $y_2'\ll y_2$. By O4 in $S$, $x_2'+y_2'\ll x_2+y_2$. Thus, using that $z\mapsto \ov z$ preserves
compact containment,
\[
\ov x_1+\ov y_1\leq \ov x_2'+\ov y_2'\ll \ov x_2+\ov y_2. 
\]

Finally, let us prove O5. Translating  by a suitable $\ov e\in S_{\co}$, we may again assume that all the elements involved  are positive. Suppose that we have elements $\ov x,\ov y,\ov w\in S_{\cc}$ such that $\ov x+\ov w\leq \ov y$. Let $x'\ll x''\ll x$, $w'\ll w''\ll w$. 
Then 
\[
x''+w''+e\leq y+e
\]
for some $e\in S_{\co}$. By O5 in $S$, there exists $z$ such that $w'\ll z$ and
\[
x'+z+e\leq y+e \leq x''+e+z.
\]
Passing to $S_{\cc}$ we find that $\ov x' +\ov z\leq \ov y\leq \ov x+\ov z$ and $\ov w'\ll \ov z$. This proves O5 in $S_{\cc}$.
\end{proof}

Let $\alpha\colon S\to T$ be a morphism of Cu-semigroups. Since $\alpha$ maps compact elements
to compact elements, we have a map $(x,e)\mapsto (\alpha(x),\alpha(e))$ from $S\times S_{\co}$ to $T\times T_{\co}$. 

\begin{lemma}
The map $(x,e)\mapsto (\alpha(x),\alpha(e))$ preserves the $\precsim_{\cc}$ relation.
\end{lemma}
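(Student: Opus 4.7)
The plan is to chase definitions, using that $\alpha$ preserves order, addition, neutral elements, suprema of increasing sequences, and the compact containment relation.

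Assume $(x,e)\precsim_{\cc}(y,f)$. To verify $(\alpha(x),\alpha(e))\precsim_{\cc}(\alpha(y),\alpha(f))$, fix an arbitrary $x''\ll\alpha(x)$ in $T$; the goal is to produce $g'\in T_{\co}$ with $x''+\alpha(f)+g'\ll\alpha(y)+\alpha(e)+g'$. First I would pull $x''$ back to $S$: choose a $\ll$-increasing sequence $(x_n)$ in $S$ with $\sup x_n=x$ (axiom O2). By M1, $\sup\alpha(x_n)=\alpha(x)$, so $x''\leq\alpha(x_{n_0})$ for some $n_0$, and $x':=x_{n_0}$ satisfies $x'\ll x$.

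Next I would apply the hypothesis $(x,e)\precsim_{\cc}(y,f)$ in the equivalent reformulation recorded earlier in the paper: there exist $y'\ll y$ in $S$ and $g\in S_{\co}$ such that
\[
x'+f+g\leq y'+e+g.
\]
Applying $\alpha$ (which preserves order and addition) yields
\[
\alpha(x')+\alpha(f)+\alpha(g)\leq \alpha(y')+\alpha(e)+\alpha(g).
\]
By M2, $\alpha(g)\ll\alpha(g)$, so $\alpha(g)\in T_{\co}$, and likewise $\alpha(y')\ll\alpha(y)$. Using O4 in $T$ (together with the fact that $\alpha(e)$ and $\alpha(g)$ are compact hence $\ll$ themselves),
\[
\alpha(y')+\alpha(e)+\alpha(g)\ll \alpha(y)+\alpha(e)+\alpha(g).
\]
Combining this with the previous inequality and the bound $x''\leq\alpha(x')$ gives
\[
x''+\alpha(f)+\alpha(g)\ll \alpha(y)+\alpha(e)+\alpha(g),
\]
which is the required witness with $g':=\alpha(g)$.

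This is really just a definition-chase, so I do not expect any serious obstacle. The only subtle point is tracking where a strict $\ll$ versus a mere $\leq$ is needed: the starting $x'\ll x$ in the domain is produced via M1 applied to an O2 sequence (not just by appealing to O2 in $T$), and the final $\ll$ on the right-hand side is obtained by adding the compact elements $\alpha(e)$ and $\alpha(g)$ to the compact containment $\alpha(y')\ll\alpha(y)$ via O4.
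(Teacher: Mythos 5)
Your proof is correct and follows essentially the same route as the paper's: pull the test element $x''\ll\alpha(x)$ back to some $x'\ll x$ via O2 in $S$ and M1 for $\alpha$, apply the hypothesis in $S$, and push the resulting inequality forward with $\alpha$. The only difference is cosmetic: you restore the literal $\ll$ required by the definition of $\precsim_{\cc}$ (via the $y'\ll y$ reformulation and O4 in $T$), a step the paper leaves implicit by ending with a $\leq$ against $\alpha(y)$ and invoking the recorded equivalent formulation of the relation.
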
	

\begin{proof}
Let $(x,e)$ and $(y,f)$ be pairs in $S\times S_{\co}$ such that
$(x,e)\precsim_{\cc}(y,f)$. Let $z\ll \alpha(x)$. Choose $x'\ll x$ such that
$z\leq \alpha(x')\ll \alpha(x)$. Then $x'+f+g\leq y+e+g$ for some  $g\in S_{\co}$.
Then,
\[
z+\alpha(f)+\alpha(g)\leq \alpha(x')+\alpha(f)+\alpha(g)\leq \alpha(y)+\alpha(e)+\alpha(g).
\]
It follows that $(\alpha(x),\alpha(e))\precsim_{\cc} (\alpha(y),\alpha(f))$.
\end{proof}

In view of the previous lemma, we can  define
$\alpha_{\cc}\colon S_{\cc}\to T_{\cc}$ by
\[
\alpha_{\cc}(\ov x-\ov e)=\ov{\alpha(x)}-\ov{\alpha(e)}.
\]

\begin{theorem}\label{ccmaps}
The map  $\alpha_{\cc}\colon S_{\cc}\to T_{\cc}$ is a morphism of $\Cu$-semigroups. 
\end{theorem}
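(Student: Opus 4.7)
The plan is to check the remaining conditions for $\alpha_{\cc}$ to be a Cu-morphism: preservation of addition, the neutral element, order, suprema of increasing sequences (M1), and compact containment (M2). Well-definedness and order preservation are immediate from the preceding lemma, since it shows that the componentwise map $(x,e)\mapsto(\alpha(x),\alpha(e))$ respects $\precsim_{\cc}$. Additivity and $\alpha_{\cc}(0)=0$ reduce to $\alpha(x+y)=\alpha(x)+\alpha(y)$ and $\alpha(0)=0$, applied componentwise.

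For M1, given an increasing sequence $(x_n)$ in $S_{\cc}$, I would first translate by a suitable $\ov{e}$ (with $e\in S_{\co}$) to assume each $x_n$ is positive. This is harmless: translation by $\ov{e}$ is an order automorphism of $S_{\cc}$ and satisfies $\alpha_{\cc}(u+\ov{e})=\alpha_{\cc}(u)+\ov{\alpha(e)}$. Theorem \ref{suprema} then yields an increasing sequence $(z_n)$ in $S$ with $\ov{z_n}\leq x_n$ and $\sup_n x_n=\ov{\sup_n z_n}$. Using that $\alpha$ preserves suprema in $S$ and applying Lemma \ref{ovCumap} in $T$, I compute
\[
\alpha_{\cc}(\sup_n x_n)=\ov{\alpha(\sup_n z_n)}=\sup_n\ov{\alpha(z_n)}=\sup_n\alpha_{\cc}(\ov{z_n})\leq\sup_n\alpha_{\cc}(x_n),
\]
and the reverse inequality is immediate from order preservation.

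For M2, suppose $x\ll y$ in $S_{\cc}$. After the same kind of translation, assume $x,y\geq 0$ and use Lemma \ref{ontopositives} to write $x=\ov{x_0}$, $y=\ov{y_0}$ with $x_0,y_0\in S$. Pick a $\ll$-increasing sequence $(y_{0,m})_m$ in $S$ with $y_0=\sup_m y_{0,m}$; by Lemma \ref{ovCumap}, $(\ov{y_{0,m}})_m$ is $\ll$-increasing in $S_{\cc}$ with supremum $\ov{y_0}$, so $\ov{x_0}\leq\ov{y_{0,m_0}}$ for some $m_0$. Since $\alpha$ preserves $\ll$ in $S$, a second application of Lemma \ref{ovCumap} in $T$ gives
\[
\alpha_{\cc}(x)\leq\ov{\alpha(y_{0,m_0})}\ll\ov{\alpha(y_0)}=\alpha_{\cc}(y).
\]

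The main obstacle is not conceptual but organizational: elements of $S_{\cc}$ are formal differences that need not be positive, so M1 and M2 each require an initial translation to reduce to positive elements, where Theorem \ref{suprema} and Lemma \ref{ontopositives} apply. Once that reduction is made, every remaining step is delegated either to Lemma \ref{ovCumap} (to move between $S$ and $S_{\cc}$, or between $T$ and $T_{\cc}$) or to the hypothesis that $\alpha$ itself is a Cu-morphism.
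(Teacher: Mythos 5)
Your proposal is correct and follows essentially the same route as the paper's proof: reduce M1 and M2 to positive elements by translating by a compact class, use Theorem \ref{suprema} to lift an increasing positive sequence to $S$, and delegate the rest to Lemma \ref{ovCumap} and the fact that $\alpha$ is a Cu-morphism. Your version merely makes explicit a few steps the paper leaves implicit (compatibility of $\alpha_{\cc}$ with the translation and the use of Lemma \ref{ontopositives} to lift positive elements).
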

\begin{proof}
Additivity, preservation of order, and preservation of 0  are straightforward to check.	

Let us prove the preservation of sequential suprema.
Translating by compact elements,  it suffices to consider sequences of positive elements.	
Let $(\ov x_n)_n$ be an  increasing sequence in $S_{\cc}$.  
The inequality $\sup \alpha_{\cc}(\ov{x_n})\leq \alpha_{\cc}(\sup \ov {x_n})$ is clear.
Choose an increasing sequence $(z_n)_n\subseteq S$ such that $\ov{\sup z_i}=\sup \ov{x_n}$ and $\ov z_n\leq \ov x_n$ for all $n$. Set $z=\sup z_n$.
Since $\alpha$ is a morphism of Cu-semigroups,  $\sup_n \alpha(z_n)=\alpha(z)$. Passing to $S_{\cc}$  we get
\[
\alpha_{\cc}(\ov z)=\sup \alpha_{\cc}(\ov z_n)\leq \sup \alpha_{\cc}(\ov{x_n}),
\]
where we have used that the map $x\mapsto \ov x$ is supremum preserving. Thus, 
$\alpha_{\cc}(z)=\sup \alpha_{\cc}(\ov {x_n})$.

Finally, let us prove preservation of compact containment: Again translating all the elements involved it suffices to deal with positive elements  only.
Say $\ov x\ll \ov y$ in $S_{\cc}$. Since $\ov y=\sup_{y'\ll y} \ov y'$, there exists $y'\ll y$ such that  $\ov x\leq \ov y'$.  Now, $\alpha(y')\ll \alpha(y)$. 
Hence,
\[
\alpha_{\cc}(\ov x)\leq \alpha_{\cc}(\ov y')=\ov{\alpha(y')}\ll \ov{\alpha(y)}=\alpha(\ov y),
\]
where we have used that the map $z\mapsto \ov z$ preserves $\ll$.
\end{proof}

It is straightforward to check that $S\mapsto S_{\cc}$, $\alpha\mapsto \alpha_{\cc}$ is a functor from the category Cu to itself.

In \cite[Theorem 2]{Coward-Elliott-Ivanescu} it is shown that inductive limits exist in the category $\Cu$. It follows from the proof of this theorem that inductive limits in the category $\Cu$ can be characterized as follows: 	Let $(S_i, \alpha_{i,j})_{i,j\in I}$ be an inductive system in $\Cu$. A semigroup $S$ with maps $\alpha_{i,\infty}\colon S_i\to S$ is the inductive limit of 
$(S_i, \alpha_{i,j})_{i,j}$ in the category $\Cu$ if and only if the following two conditions are satisfied:
\begin{description}\label{indlimits}
\item[L1] For every $s\in S$ there exist $s_i\in S_i$ such that 
\[
\alpha_{i,i+1}(s_i)\le s_{i+1},\quad s=\sup \alpha_{i,\infty}(s_i).
\]
\item[L2] If $s,t\in S_i$, for some $i$, are such that $\alpha_{i,\infty}(s)\le \alpha_{i,\infty}(t)$ and $s'\ll s$ then there exists $j\geq i$ such that $\alpha_{i,j}(s')\le \alpha_{i,j}(t)$.
\end{description}

\begin{theorem}\label{SccContinuous}
The functor  $S\mapsto S_{\cc}$, $\alpha\mapsto \alpha_{\cc}$ preserves inductive limits.
\end{theorem}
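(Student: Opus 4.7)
The plan is to verify the L1--L2 characterization of inductive limits in $\Cu$ (recalled just before the theorem) for the system $(S_{i,\cc}, (\alpha_{i,j})_{\cc})$ with candidate limit $(S_{\cc}, (\alpha_{i,\infty})_{\cc})$, where $(S, (\alpha_{i,\infty}))$ denotes the limit of $(S_i, \alpha_{i,j})$ in $\Cu$. The key preliminary lemma I would establish is that \emph{every compact element of $S$ lifts to a compact element of some $S_j$}. Given $e \in S_{\co}$, L1 for $S$ produces $e_i \in S_i$ with $e = \sup_i \alpha_{i,\infty}(e_i)$; compactness of $e$ forces $\alpha_{n,\infty}(e_n) = e$ for some $n$. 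Approximating $e_n$ from below by a $\ll$-increasing sequence $(e_n^{(m)})$ with supremum $e_n$, compactness of $e$ gives $\alpha_{n,\infty}(e_n^{(m_0)}) = e$ for some $m_0$. Applying L2 for $S$ with $s = e_n$, $t = e_n^{(m_0)}$ (noting $\alpha_{n,\infty}(s) = e = \alpha_{n,\infty}(t)$) and approximation $s' = e_n^{(m_0+1)} \ll e_n$ produces $j \geq n$ with $\alpha_{n,j}(e_n^{(m_0+1)}) \leq \alpha_{n,j}(e_n^{(m_0)})$; combined with $\alpha_{n,j}(e_n^{(m_0)}) \ll \alpha_{n,j}(e_n^{(m_0+1)})$ this forces $f := \alpha_{n,j}(e_n^{(m_0)}) \in (S_j)_{\co}$ with $\alpha_{j,\infty}(f) = e$.

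For L1 of the system $(S_{i,\cc})$: given $t = \ov{x} - \ov{e} \in S_{\cc}$, lift $e$ to a compact $f \in (S_j)_{\co}$ via the lemma, lift $x$ via L1 for $S$ to a compatible sequence $(x_i)_{i \geq j}$ with $x = \sup \alpha_{i,\infty}(x_i)$, and set $t_i := \ov{x_i} - \ov{\alpha_{j,i}(f)} \in S_{i,\cc}$. Compatibility of the $t_i$ is immediate, and the identity $t = \sup_i (\alpha_{i,\infty})_{\cc}(t_i)$ follows from Lemma~\ref{ovCumap} (the map $\ov{\cdot}$ preserves sequential suprema) combined with the fact that translation by the compact $\ov{e}$ is an order-automorphism of $S_{\cc}$ commuting with suprema.

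For L2 of the system: given $s, s', t \in S_{i,\cc}$ with $s' \ll s$ and $(\alpha_{i,\infty})_{\cc}(s) \leq (\alpha_{i,\infty})_{\cc}(t)$, first place all three on a common compact denominator $e_0 \in (S_i)_{\co}$ (by adding the other denominators), writing $s = \ov{(x_0, e_0)}$, $s' = \ov{(a_0, e_0)}$, $t = \ov{(y_0, e_0)}$. Translating by $\ov{e_0}$ reduces the hypotheses to $\ov{a_0} \ll \ov{x_0}$ in $S_{i,\cc}$ and $\ov{\alpha_{i,\infty}(x_0)} \leq \ov{\alpha_{i,\infty}(y_0)}$ in $S_{\cc}$. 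Using the $\ll$-approximation of $\ov{x_0}$ twice (via Lemma~\ref{ovCumap}), extract a chain $b_0^* \ll b_0 \ll x_0$ in $S_i$ with $\ov{a_0} \leq \ov{b_0^*}$. The definition of $\precsim_{\cc}$ in $S$, applied with $w = \alpha_{i,\infty}(b_0) \ll \alpha_{i,\infty}(x_0)$, produces $g \in S_{\co}$ with $\alpha_{i,\infty}(b_0) + g \ll \alpha_{i,\infty}(y_0) + g$. Lift $g$ to a compact $g_k \in (S_k)_{\co}$ via the preliminary lemma, and apply L2 for $S$ to the $\ll$-pair $\alpha_{i,k}(b_0^*) + g_k \ll \alpha_{i,k}(b_0) + g_k$ (valid by O4 in $S_k$, since $\alpha_{i,k}(b_0^*) \ll \alpha_{i,k}(b_0)$ and $g_k$ is compact) to obtain $j \geq k$ and $h := \alpha_{k,j}(g_k) \in (S_j)_{\co}$ with $\alpha_{i,j}(b_0^*) + h \leq \alpha_{i,j}(y_0) + h$ in $S_j$. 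A short $\ll$-approximation argument (for each $\beta \ll \alpha_{i,j}(b_0^*)$, use $(b_0^*)_m \ll b_0^*$ in $S_i$ with supremum $b_0^*$ to get $\beta \leq \alpha_{i,j}((b_0^*)_{m_0})$ and then $\beta + h \leq \alpha_{i,j}((b_0^*)_{m_0}) + h \ll \alpha_{i,j}((b_0^*)_{m_0+1}) + h \leq \alpha_{i,j}(y_0) + h$) shows this $\leq$ inequality implies $\ov{\alpha_{i,j}(b_0^*)} \leq \ov{\alpha_{i,j}(y_0)}$ in $S_{j,\cc}$; translating back by $\ov{\alpha_{i,j}(e_0)}$ and using $s' \leq \ov{(b_0^*, e_0)}$ yields $(\alpha_{i,j})_{\cc}(s') \leq (\alpha_{i,j})_{\cc}(t)$.

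The main obstacle is the L2 step: the doubly-approximated chain $b_0^* \ll b_0 \ll x_0$ above $a_0$ in $S_i$ must be set up carefully so that a single application of L2 in $S$ (after lifting the auxiliary compact $g$) yields a \emph{single} $j$ and $h$ sufficing for the specific $s'$, rather than only for each element of a countable family of approximations separately.
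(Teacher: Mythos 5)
Your proposal is correct and follows the same route as the paper: verify conditions L1 and L2 for the system $((S_i)_{\cc},(\alpha_{i,j})_{\cc})$, using that compact elements of $S$ lift to compact elements at some finite stage (a fact the paper asserts and you prove). In fact your treatment of L2 is more careful than the paper's terse one, which passes from the hypothesis in $S_{\cc}$ directly to an inequality $\alpha_{i,j}(x')\leq \alpha_{i,j}(y)$ in $S_j$ without addressing the compact correction term $g$ appearing in the definition of $\precsim_{\cc}$; your lifting of $g$ to a finite stage and application of L2 of the original system to the translated pair supplies exactly those missing details, and your closing ``obstacle'' is already resolved by the argument as written, since the single compact $h=\alpha_{k,j}(g_k)$ serves for every $\beta\ll\alpha_{i,j}(b_0^*)$.
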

\begin{proof}
Let $(S_i,\alpha_{i,j})_{i,j}$ be an inductive system with inductive limit $(S,\alpha_{i,\infty})_i$.
We must show that L1 and L2 above are satisfied after applying the $\cc$-construction. 

Let us prove L1, i.e., that every element in $S_{\cc}$ is the supremum of an increasing sequence of elements in $\bigcup_i \mathrm{im}((\alpha_{i,\infty})_{\cc})$.  Let $\ov x-\ov e\in S_{\cc}$. Every compact element in $S$ is the image of some compact element in some $S_i$. So there exists $e'\in S_i$ such that $\alpha_{i,\infty}(e')=e$. Passing to the inductive system $(S_j,\phi_{j,k})_{j,k\geq i}$, we can find an increasing sequence $(x_n)_n\subset  \bigcup_{j\geq i} \mathrm{im}(\alpha_{j,\infty})$  such that  $x=\sup x_n$. Then $\ov x-\ov e=\sup \ov x_n-\ov e$, and the sequence $(\ov x_n-\ov e)$ is contained in $\bigcup_i \mathrm{im}((\alpha_{i,\infty})_{\cc})$.

Let us  prove L2. Let $z,w\in (S_i)_{\cc}$  be such that $(\alpha_{i,\infty})_{\cc}(z)\leq (\alpha_{i,\infty})_{\cc}(w)$.  Say $z=\ov x-\ov e$ and $w=\ov y-\ov e$, where $x,y,e\in S_i$ and $e$ is a compact element. If $z'\ll z$ then $z'\leq \ov x'-\ov e$ for some $x'\ll x$. So it suffices to assume that $z'=\ov x'-\ov e$. Since $S$ is the inductive limit of the system $(S_j,\alpha_{j,k})$ in the category Cu, there exists $j\geq i$
such that $\alpha_{i,j}(x')\leq \alpha(y)$. It is then clear that $(\alpha_{i,j})_{\cc}(\ov x'-\ov e)\leq (\alpha_{i,j})_{\cc}(\ov y-\ov e)$, as desired.
\end{proof}

\section{The augmented Cuntz semigroup} \label{Cusim}
\subsection{Definition and basic properties}
Let $A$ be a C*-algebra. We now turn to the definition of the ordered semigroup  $\Cusim(A)$, which we call
the augmented Cuntz semigroup of the C*-algebra $A$.

Since $\Cu(A)$ is a Cu-semigroup satisfying O5, we can apply the $\cc$-functor from the previous section to it thereby obtaining a Cu-semigroup $(\Cu(A))_{\cc}$. We shall denote by $\Cu_{\cc}$ the functor obtained by first applying the Cuntz semigroup functor followed by the $\cc$-functor. We thus write $\Cu_{\cc}(A)$ and $\Cu_{\cc}(\phi)$, rather than $(\Cu(A))_{\cc}$ and $(\Cu(\phi))_{\cc}$, although these mean the same thing.

Let $\tilde A$ denote the forced unitization of $A$. Observe that $[1]\in \Cu(\tilde A)$ is a full compact element: for any compact  $e\in \Cu(\tilde A)$ there exists $n\in \N$ such that $e\leq n[1]$. It follows that
\[
\Cu_{\cc}(\tilde A)=\{\ov{[a]}-n\ov{[1]}:[a]\in \Cu(\tilde A),\, n\in \N\}.
\]
Let $\pi\colon \tilde A\to \C$ denote the canonical quotient map. The 
$\Cu$-morphism $\Cu(\pi)$ from $\Cu(\tilde A)$ to $\Cu(\C)\cong \overline\N$ gives rise to a $\Cu$-morphism
$\Cu_{\cc}(\pi)$ from $\Cu(\tilde A)$ to $ \Cu_{\cc}(\C)\cong \overline{\Z}$.
We call $\Cu_{\cc}(\pi)$ the rank map on $\Cu_{\cc}(\tilde A)$ and we alternatively denote it by $\rank(\cdot)$. 
Notice that under the identification
of $\Cu(\C)$ with $\overline{\N}$,  $\Cu(\pi)$
assigns to a  Cuntz class   $[a]\in \Cu(\tilde A)$, where $a\in (\tilde A\otimes \mathcal K)_+$, the rank
of  $\pi(a)\in \mathcal K$. We also denote this number by  $\rank(a)$, $\rank([a])$, or $\rank(\ov{[a]})$.

We define $\Cusim(A)$ as the kernel of the rank map on $\Cu_{\cc}(\tilde A)$, i.e.,
\[
\Cusim(A):=\Big\{\ov{[a]}-n\ov{[1]} : [a]\in \Cu(\tilde A),\,n\in\N\hbox{ such that }\rank(\pi(a))=n\Big\}.
\] 
We endow $\Cusim(A)$ with the addition and the order from $\Cu_{\cc}(\tilde A)$. 

Let  $\iota \colon A\to \tilde A$ denote the inclusion of $A$ in $\tilde A$.
This map induces an embedding of $\Cu(A)$ as a subsemigroup (indeed, as an ideal) of $\Cu(\tilde A)$. Let us regard $\Cu(A)$ as a subsemigroup of $\Cu(\tilde A)$ via this embedding.
Observe then that the rank map on $\Cu(\tilde A)$  is zero on   $ \Cu(A)$.
It follows that the map $Cu_{\cc}(A)\stackrel{\Cu_{\cc}(\iota)}{\to} Cu_{\cc}(\tilde A)$, induced by the inclusion of  $\Cu(A)$  in $\Cu(\tilde A)$ ranges in $\Cusim(A)$.
We thus get a map $\Cu_{\cc}(\iota)\colon \Cu_{\cc}(A)\to \Cusim(A)$,
\[
\Cu_{\cc}(A)\ni \ov x-\ov e\longmapsto \ov x-\ov e\in \Cusim(A).
\]  
\begin{proposition}\label{unitalCusim}
Let $A$ be a unital C*-algebra. Then the map  $\Cu_{\cc}(\iota)\colon \Cu_{\cc}(A)\to \Cusim(A)$ described above
is an isomorphism of ordered semigroups.
\end{proposition}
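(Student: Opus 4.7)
The plan is to exploit that, since $A$ is unital, $\tilde{A} \cong A \oplus \C$ as C*-algebras, whence $\Cu(\tilde{A}) \cong \Cu(A) \oplus \overline{\N}$ (the Cuntz semigroup preserves finite direct sums). Under this identification, $\Cu(\iota)(x) = (x,0)$, $\Cu(\pi)(x,m) = m$, and $[1_{\tilde A}]$ corresponds to $(c,1)$ with $c = [1_A] \in \Cu(A)_{\co}$. Every element of $\Cusim(A)$ therefore has the form $\ov{(x,n)} - n\ov{(c,1)}$ for some $x \in \Cu(A)$ and $n \in \N$.

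For surjectivity, I would propose $\ov{x} - n\ov{c} \in \Cu_{\cc}(A)$ as a preimage of $\ov{(x,n)} - n\ov{(c,1)}$: its image under $\Cu_{\cc}(\iota)$ is $\ov{(x,0)} - n\ov{(c,0)}$, and adding the compact element $n\ov{(c,0)} + n\ov{(c,1)}$ to both this and to the proposed target reduces the desired equality to the tautology $\ov{(x+nc,n)} = \ov{(x+nc,n)}$.

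For injectivity together with order reflection, I would reduce both to the following observation: for $s,s' \in \Cu(A)$, one has $(s,0) \precsim_{\cc} (s',0)$ in $\Cu(\tilde A)$ if and only if $s \precsim_{\cc} s'$ in $\Cu(A)$. The nontrivial direction uses that a way-below element of $(s,0)$ in $\Cu(A)\oplus\overline{\N}$ must have the form $(t,0)$ with $t \ll s$ (every increasing sequence with supremum $(s,0)$ has identically zero second coordinate), and that a compact element of $\Cu(\tilde A)$ splits as $(g_1,g_2)$ with $g_1 \in \Cu(A)_{\co}$ and $g_2 \in \N$; the test inequality $(t,0) + (g_1,g_2) \leq (s',0) + (g_1,g_2)$ then collapses onto its first coordinate $t + g_1 \leq s' + g_1$. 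Granted this observation, an inequality $\Cu_{\cc}(\iota)(\ov y - \ov f) \leq \Cu_{\cc}(\iota)(\ov{y'} - \ov{f'})$ becomes, after adding compact terms to both sides, $\ov{(y+f',0)} \leq \ov{(y'+f,0)}$ in $\Cu_{\cc}(\tilde A)$; unwinding the order on the $\cc$-construction and applying the observation yields $y+f' \precsim_{\cc} y'+f$ in $\Cu(A)$, i.e.\ $\ov y - \ov f \leq \ov{y'} - \ov{f'}$ in $\Cu_{\cc}(A)$. Applied in both directions, this simultaneously proves order reflection and (by antisymmetry) injectivity.

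The main obstacle is really just the observation about $\precsim_{\cc}$ living inside the subsemigroup $\Cu(A) \oplus \{0\}$ of $\Cu(\tilde A)$ above; once one is comfortable identifying the way-below and compact elements in the direct sum, the surjectivity and order-reflection steps reduce to routine bookkeeping with the compact-induced translations in $S_{\cc}$.
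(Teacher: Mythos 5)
Your proof is correct and takes essentially the same route as the paper: both rest on the decomposition $\tilde A\cong A\oplus\C$, hence $\Cu(\tilde A)\cong\Cu(A)\times\overline{\N}$ with coordinatewise structure, and identify $\Cusim(A)$ as the copy of $\Cu_{\cc}(A)$ inside $\Cu_{\cc}(\tilde A)$ cut out by the rank map. The only difference is that where the paper declares the identification $\Cu_{\cc}(\tilde A)\cong\Cu_{\cc}(A)\times\overline{\Z}$ ``straightforward to calculate,'' you carry out the relevant part of that calculation explicitly, via the coordinatewise behaviour of $\ll$, the compact elements, and the relation $\precsim_{\cc}$.
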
	
\begin{proof}
Since $A$ is unital, $\tilde A\cong A\oplus \C$ via the homomorphism $a+\lambda \tilde 1\mapsto(a+\lambda 1,\lambda)$
(where $\tilde 1$ is the unit in $\tilde A$ and $1$ the unit in $A$).  Let us identify $\tilde A$ with $A\oplus \C$ via this isomorphism.
Observe then that $A$, as a subalgebra of $\tilde A$, is $A\times \{0\}$, and that the quotient homomorphism 
$\tilde A\to \C$ is  $(a,\lambda)\mapsto \lambda$.

We have $\Cu(\tilde A)=\Cu(A)\times \overline{\N}$ (on the right side order and addition are taken coordinatewise). 
The set of compact elements of  $\Cu(A)\times \overline{\N}$ is $\Cu_{\co}(A)\times \N$. It is straightforward to calculate that
$\Cu_{\cc}(\tilde A)\cong \Cu_{\cc}(A)\times \overline{\Z}$.  Further, the
rank map $\rank\colon \Cu_{\cc}(A)\to \overline{\Z}$ is simply $(\ov{[a]}-n\ov{[1]},m)\mapsto m$. In particular, the subsemigroup of rank zero elements $\Cusim(A)$ is $(\Cu_{\cc}(A),0)$. This is precisely the range of  the 
map $\Cu_{\cc}(\iota)\colon \Cu_{\cc}(A)\to \Cusim(A)$ described above. Indeed, this map is $\ov x-\ov e\mapsto (\ov x-\ov e,0)$. 
\end{proof}	
\begin{example}
Recall that $\Cu(\C)\cong \ov \N$. Thus, the preceding proposition  and Example \ref{Ncc} imply that  $\Cusim(\C)\cong \ov \Z$.
\end{example}

As remarked above, if $x\in \Cu(A)$ then $\ov x$  in $\Cu_{\cc}(\tilde A)$ has rank 0, and thus belongs to $\Cusim(A)$. Let us define $q\colon \Cu(A)\to \Cusim(A)$ as $q(x)=\ov x$.
\begin{lemma}\label{ontoq}
The map $q$ is a surjection onto the positive elements of $\Cusim(A)$.	
\end{lemma}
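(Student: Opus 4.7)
The plan is to reduce this to Lemma~\ref{ontopositives} applied to the Cu-semigroup $\Cu(\tilde A)$, and then use the rank map to push the lift down from $\Cu(\tilde A)$ to $\Cu(A)$.

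In detail, let $y\in\Cusim(A)$ with $y\geq 0$. Viewing $y$ as a positive element of $\Cu_{\cc}(\tilde A)$, Lemma~\ref{ontopositives} produces a class $[b]\in\Cu(\tilde A)$ such that $y=\ov{[b]}$. The point is then to show that $[b]$ can be chosen in $\Cu(A)$, using that $y$ has rank $0$.

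For this, I apply the Cu-morphism $\Cu_{\cc}(\pi)\colon \Cu_{\cc}(\tilde A)\to\Cu_{\cc}(\C)\cong \ov\Z$ (which is a Cu-morphism by Theorem~\ref{ccmaps}) to the equality $y=\ov{[b]}$. By functoriality of the $\cc$-construction and the definition of $\Cu_{\cc}(\pi)$ on classes of the form $\ov{x}$, one gets
\[
0=\rank(y)=\Cu_{\cc}(\pi)(\ov{[b]})=\ov{\Cu(\pi)([b])}=\ov{\rank([b])}
\]
in $\ov\Z$. Since $\ov{n}\neq 0$ in $\ov\Z$ for any $n\in\ov\N\setminus\{0\}$ (both finite and infinite), this forces $\rank([b])=0$, i.e., $\pi(b)=0$ (where $\pi$ stands for $\pi\otimes\mathrm{id}_{\K}$). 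Hence $b\in (A\otimes\K)_+$, so $[b]\in\Cu(A)$ under the identification of $\Cu(A)$ with the ideal $\ker\Cu(\pi)$ of $\Cu(\tilde A)$. Then $y=\ov{[b]}=q([b])$, proving surjectivity of $q$ onto the positive cone.

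Conversely, every element of the form $q(x)=\ov{x}$ for $x\in\Cu(A)$ is automatically positive in $\Cusim(A)$, since $\ov x=\ov x-\ov 0\geq \ov 0=0$ in $\Cu_{\cc}(\tilde A)$. No real obstacle is anticipated here: the argument is essentially a combination of Lemma~\ref{ontopositives} with the observation that $\Cu(A)$ is exactly the rank-zero ideal of $\Cu(\tilde A)$, and the only thing to verify carefully is the identification of the kernel of $\Cu_{\cc}(\pi)$ on classes $\ov{[b]}$ with the kernel of $\Cu(\pi)$ on $[b]$, which is immediate from the computation above.
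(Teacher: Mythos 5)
Your proof is correct and follows essentially the same route as the paper: lift a positive element of $\Cusim(A)$ to some $x\in\Cu(\tilde A)$ via Lemma \ref{ontopositives}, then use that the element has rank zero to conclude $x\in\Cu(A)$, so $q(x)$ is the given element. The extra verifications you include (that $\Cu_{\cc}(\pi)(\ov{[b]})=\ov{\rank([b])}$ and that $\ov n=0$ in $\ov\Z$ forces $n=0$) just spell out what the paper leaves implicit.
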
	
\begin{proof}
Let $z\in \Cusim(A)\subseteq \Cu_{\cc}(\tilde A)$ be a positive element.  By Lemma \ref{ontopositives},   $z=\ov{x}$ for some $x\in \Cu(\tilde A)$.
But $\rank(x)=0$, so $x\in \Cu(A)$. Thus,   $q(x)=z$.	
\end{proof}

\begin{theorem}\label{CusimCu}
$\Cusim(A)$ is a Cu-semigroup. Moreover, it satisfies  the following  additional  axioms:
\begin{enumerate}[(i)]
\item  For all $x\in \Cusim(A)$ there exists $z\geq 0$ such that $x+z\geq 0$.
\item For all $x'\ll x$ there exists $z$ such that $x'+z\leq 0\leq x+z$.
Moreover, if $x+y\leq 0$ for some $y$ and $y'\ll y$ then $z$  can be chosen such that $y'\leq z$.
\end{enumerate}
\end{theorem}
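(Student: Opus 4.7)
I will prove the theorem in three parts: (a) verifying the Cu-semigroup axioms O0--O4 for $\Cusim(A)$; (b) axiom (i); and (c) axiom (ii). Throughout, I exploit that $\Cusim(A)$ is the kernel of the Cu-morphism $\rank := \Cu_{\cc}(\pi)\colon \Cu_{\cc}(\tilde A) \to \overline{\Z}$, so it sits inside the Cu-semigroup $\Cu_{\cc}(\tilde A)$, which already satisfies O0--O5 by Theorem~\ref{SccCu}. The extra axioms (i) and (ii) tie to the extension
\[
0 \to A \otimes \K \to \tilde A \otimes \K \to \K \to 0
\]
and use lifting arguments across it.

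For the Cu-semigroup axioms, closure under addition is immediate from additivity of the rank map; an increasing sequence $(x_n)$ in $\Cusim(A)$ has supremum $x$ in $\Cu_{\cc}(\tilde A)$ with $\rank(x) = \sup \rank(x_n) = 0$, giving O1. Axioms O0, O3, O4 transfer verbatim from the ambient semigroup. For O2, given $x = \ov{[a]} - n\ov{[1]}$ with $\rank(a) = n$, the natural approximants $\ov{[(a - 1/k)_+]} - n\ov{[1]}$ form a $\ll$-increasing sequence with supremum $x$; they lie in $\Cusim(A)$ for $k$ large enough because $\pi(a) \in \K$ has finite rank $n$, so $(\pi(a) - 1/k)_+$ has rank $n$ as soon as $1/k$ is below the smallest nonzero spectral value of $\pi(a)$.

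For (i), given $x = \ov{[a]} - n\ov{[1]}$ I take $z = \ov{[c]}$ with $c \in (A\otimes\K)_+$ to be constructed; then $\rank(c) = 0$ so $z \in \Cusim(A)$ is positive, and $x + z \geq 0$ reduces by Lemma~\ref{ontopositives} to $n[1] \leq [a] + [c]$ in $\Cu(\tilde A)$. Fix a rank-$n$ projection $p_n \in \K$; then $p_n \sim_\Cu \pi(a)$ in $\K$, and the standard lifting of Cuntz subordination across the quotient $\pi\otimes\mathrm{id}_{\K}$ produces $c \in (A \otimes \K)_+$ with $1 \otimes p_n \precsim a + c$ in $\tilde A\otimes\K$, whence $n[1] \leq [a] + [c]$ as desired.

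For (ii), O2 reduces to $x' = \ov{[a']} - n\ov{[1]}$ with $[a'] \ll [a]$ and $\rank(a') = n$. The plan is to apply O5 of $\Cu(\tilde A)$ to the data $[a'] \ll [a]$ paired with the lifting inequality $n[1] \leq [a] + [d]$ from (i) (with $w = [d]$, $w' \ll w$), producing an element whose rank is forced to $0$ by the ambient rank inequalities; this gives $c_0 \in (A \otimes \K)_+$ satisfying $[a'] + [c_0] \leq [a] + [d]$ and $[a] + [d] \leq [a] + [c_0]$. Then $z := \ov{[c_0]}$ yields $0 \leq x + z$ directly. Obtaining $x' + z \leq 0$ with a \emph{compact} witness requires converting the non-compact lifting slack $[d]$ into compact data by combining with the dual lifting $[a] \leq n[1] + [d']$ and the $\ll$-hypothesis $[a'] \ll [a]$ via a second application of O5. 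The ``moreover'' clause with $y' \ll y$ and $x + y \leq 0$ is handled by the $w' \ll z$ component of O5, choosing $w'$ to dominate a positive representative of $y'$. The main obstacle is precisely this interplay in (ii)---simultaneously securing compact witnesses on both sides of the inequality, which demands a delicate interaction of rank tracking, lifting across $\pi$, and the almost algebraic order axiom.
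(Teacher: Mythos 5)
Your treatment of the Cu-axioms and of part (i) is mostly in line with the paper: for (i) the paper likewise invokes the Ciuperca--Robert--Santiago description of the Cuntz semigroup under quotients to produce classes in $\Cu(A)$ witnessing $n[1]\leq [a]+[b]$ (phrased there as an equality $n[1]+[c]=[a]+[b]$, with $z=\ov{[b]}$), and your concrete O2-approximants $(a-1/k)_+$ do the job. One caveat: asserting that O4 ``transfers verbatim'' hides a real step. A priori the relation $\ll$ computed inside the subsemigroup $\Cusim(A)$ could be strictly weaker than the restriction of $\ll$ from $\Cu_{\cc}(\tilde A)$, and the paper proves the two agree (via the O2-type argument: any $y\in\Cusim(A)$ is the supremum of a $\ll$-increasing sequence in $\Cu_{\cc}(\tilde A)$ whose tail has rank zero, so $x\ll y$ in $\Cusim(A)$ gives $x\leq y_{n_0}\ll y$ in the ambient semigroup). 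This agreement is exactly what allows O4 to be imported, and it is reused later for Theorem \ref{Cusim_morphisms}; your sketch omits it.

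The genuine gap is part (ii). Your plan sets $z=\ov{[c_0]}$ with $c_0\in(A\otimes\K)_+$, hence $z\geq 0$; but the element $z$ demanded by (ii) is in general \emph{not} positive, since it must satisfy $x'+z\leq 0$ even when $x'$ is a large positive element (think of a soft positive $x$), and your closing sentences explicitly concede that the decisive step---securing $x'+z\leq 0$ while keeping $0\leq x+z$ and the clause $y'\leq z$---is left as an unresolved ``interplay.'' No lifting across $\pi$ is needed here. The paper argues entirely inside $\Cu_{\cc}(\tilde A)$: choose $x'\ll x''\ll x$; since $\ov{[1]}$ is full in $\Cu_{\cc}(\tilde A)$, there is $N$ with $x''\leq N\ov{[1]}$; apply O5 in $\Cu_{\cc}(\tilde A)$ (already established in Theorem \ref{SccCu}) to get $w$ with $x'+w\leq N\ov{[1]}\leq x''+w$, and set $z=w-N\ov{[1]}$, so that $x'+z\leq 0\leq x''+z\leq x+z$; the rank map then forces $\rank(z)=0$, i.e.\ $z\in\Cusim(A)$. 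The ``moreover'' clause falls out of the same O5 application: from $x+y\leq 0$ one gets $x''+(y+N\ov{[1]})\leq N\ov{[1]}$, and since $y'+N\ov{[1]}\ll y+N\ov{[1]}$, O5 lets one choose $w\geq y'+N\ov{[1]}$, i.e.\ $y'\leq z$. Without an argument of this kind, your proposal does not prove (ii).
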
	
\begin{proof}
In Theorem \ref{SccCu}	we have  proved these properties for $\Cu_{\cc}(\tilde A)$. We show here that they pass on to $\Cusim(A)$. This is essentially a consequence of the rank map being a Cu-morphism. 

Let $(z_n)_n$ be an  increasing sequence in $\Cusim(A)$. 
Since $\rank(\cdot )$ is supremum preserving, the supremum of this sequence in $\Cu_{\cc}(\tilde A)$ has rank zero, i.e., it belongs to $\Cusim(A)$. 
Hence $\Cusim(A)$ is closed under sequential suprema and further  the supremum of a sequence in $\Cusim(A)$ agrees with the supremum
of the same sequence taken in $\Cu_{\cc}(\tilde A)$.   It is now clear that O3 holds in $\Cusim(A)$, given that it holds $\Cu_{\cc}(\tilde A)$

Let $z\in \Cusim(A)$. Choose  an $\ll$-increasing sequence $(z_n)_n$ in $\Cu_{\cc}(\tilde A)$ with supremum $z$.
Then $\sup \rank z_n=0$. Since $0$ is a compact element of $\ov\Z$, $\rank z_n=0$ for large enough $n$, i.e., $z_n\in \Cusim(A)$
for large enough $n$. Since the relation $\ll$ taken in $\Cu_{\cc}(\tilde A)$ is stronger than the same relation in $\Cusim(A)$, the sequence is also $\ll$-increasing in $\Cusim(A)$. This proves O2.

Let us show that the restriction of the relation $\ll$ in $\Cu_{\cc}(\tilde A)$ to $\Cusim(A)$ agrees with the same relation in $\Cusim(A)$. As already pointed out, one direction is clear.
Suppose that $x,y\in \Cusim(A)$ are such that $x\ll y$ in $\Cusim(A)$. Choose $(y_n)_n\subset \Cu_{\cc}(\tilde A)$ that is $\ll$-increasing and with  supremum $y$.
As observed in the previous paragraph, $y_n\in \Cusim(A)$ for large enough $n$. Thus, there exists $n_0$ such that $x\leq y_{n_0}\ll y$, where $\ll$ is taken in $\Cu_{\cc}(\tilde A)$.  Thus, $x\ll y$ in $\Cu_{\cc}(\tilde A)$. Axiom O4 in $\Cusim(A)$ is now immediate
from its holding in $\Cu_{\cc}(\tilde A)$. 

Let us prove property (i). Let $x=\ov{[a]}-n\ov{[1]}\in \Cusim(A)$, where $\rank(a)=n$. By the properties of the Cuntz semigroup functor under C*-algebra quotients (\cite{Ciuperca-Robert-Santiago}), there exist $[b],[c]\in \Cu(A)$ such that
\[
n[1]+[c] = [a]+[b],\hbox{ in }\Cu(\tilde A).
\] 
Let $z=\ov{[b]}\in \Cusim(A)$, i.e., $z$ is the image of $[b]$ under the map $q$ defined above.
Then, working in $\Cu_{\cc}(\tilde A)$, we have
\[
x+z=\ov{[a]}-n\ov{[1]}+\ov{[b]}=\ov{[a]+[b]}-n\ov{1}=\ov{[c]}\geq 0.
\] 

Finally, let us prove  (ii). Let $e=\ov{[1]}\in \Cu_{\cc}(\tilde A)$. Let $x', x\in \Cusim(A)$ be such that  $x'\ll x$. Choose $x''$  such that
$x'\ll x''\ll x$.  Since $\ov{[1]}$ is a full element  in $\Cu_{\cc}(\tilde A)$, there exists $N\in \N$ such that $x''\leq N\ov{[1]}$.
By  O5 in $\Cu_{\cc}(\tilde A)$, there exists $w\in \Cu_{\cc}(\tilde A)$ such that $x'+w\leq N\ov{[1]}\leq x''+w$.
Set $z=w-N\ov{[1]}$.
Then 
\[
x'+z\leq 0\leq x''+z\leq x+z.
\]
From the fact that $x'$ and $x$ both have rank 0 we deduce that  $z$ has rank 0 as well, i.e.,
it belongs to $\Cusim(A)$. Further, if $y',y$ are such that $x+y\leq 0$ and $y'\ll y$, then 
\[
x''+(y+N\ov{[1]})\leq N\ov{[1]}.
\] 
By O5 in $\Cu_{\cc}(\tilde A)$, $w$ may be chosen such that $y'+N\ov{[1]}\le w$, i.e., $y'\leq z$. So $z$ is as desired.
\end{proof}

\begin{remark}
If there is a greatest element in $\Cusim(A)$ (e.g., $A$ is separable),  axiom (i) of the previous theorem boils down to saying that $x+\omega=\omega$ for all $x\in \Cusim(A)$.
\end{remark}

Weak cancellation in a Cu-semigroup is defined as follows: $x+z\ll y+z$ implies $x\ll y$ for all $x,y,z$. The Cuntz semigroup of a C*-algebra of stable rank one has weak cancellation (\cite[Theorem 4.3]{Rordam-Winter}). This property, however,  is not true in general for the Cuntz semigroup of a C*-algebra.  On the other hand, $\Cusim(A)$ always has weak cancellation:

\begin{corollary}\label{weakcancellation}
If $x+z\ll y+z$ in $\Cusim(A)$ then $x\leq y$.	
\end{corollary}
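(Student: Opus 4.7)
The plan is to use axiom (ii) of Theorem \ref{CusimCu} to produce an element $w$ that behaves as an additive inverse for a way-below approximation of $z$, and then cancel $z$ from both sides of the given inequality.

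First, I would use axiom O2 (established in Theorem \ref{CusimCu}) to write $z = \sup_n z_n$ for some $\ll$-increasing sequence $(z_n)_n$ in $\Cusim(A)$. By axiom O3, $y + z = \sup_n (y + z_n)$, and since $x + z \ll y + z$, there exists an index $n_0$ such that $x + z \leq y + z_{n_0}$. This is the step where the way-below hypothesis gets consumed, converting it into a genuine inequality involving a strictly smaller element $z_{n_0} \ll z$.

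Next, since $z_{n_0} \ll z$, axiom (ii) of Theorem \ref{CusimCu} supplies an element $w \in \Cusim(A)$ with
\[
z_{n_0} + w \leq 0 \leq z + w.
\]
Adding $w$ to both sides of $x + z \leq y + z_{n_0}$ and using the compatibility of addition with the order, we obtain $x + z + w \leq y + z_{n_0} + w$. Combining with the two inequalities above gives
\[
x \;=\; x + 0 \;\leq\; x + z + w \;\leq\; y + z_{n_0} + w \;\leq\; y + 0 \;=\; y,
\]
which is what we want.

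The proof is short because the heavy lifting was already done in establishing property (ii) for $\Cusim(A)$. The only conceptual move is recognizing that the gap between $z_{n_0}$ and $z$ provided by the hypothesis $x + z \ll y + z$ is exactly what is needed to apply (ii), which in turn lets us insert and cancel $z + w$ on the left while controlling $z_{n_0} + w$ on the right. There is no serious obstacle here; the main danger is merely to check that addition is order-compatible in $\Cusim(A)$, which follows from $\Cusim(A)$ being an ordered semigroup.
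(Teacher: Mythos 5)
Your proof is correct and follows essentially the same route as the paper: the paper likewise picks $z'\ll z$ with $x+z\leq y+z'$, invokes Theorem \ref{CusimCu}(ii) to get $w$ with $z'+w\leq 0\leq z+w$, and concludes via the same chain $x\leq x+z+w\leq y+z'+w\leq y$. Your extra explanation of why such a $z'$ (your $z_{n_0}$) exists, via O2 and O3, is exactly the step the paper leaves implicit.
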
	
\begin{proof}
Let $z'\ll z$ be such that $x+z\leq y+z'$. Choose $w\in \Cusim(A)$ 
such that $z'+w\leq 0\leq z+w$. Then 
\[
x\leq x+z+w\leq y+z'+w\leq y.\qedhere
\]	
\end{proof}

Let $\phi\colon A\to B$ be a homomorphism of C*-algebras.
Let $\tilde\phi\colon \tilde A \to \tilde B$ denote the homomorphism obtained by applying the forced unitization functor.
This homomorphism gives rise to 
a Cu-morphism $\Cu(\tilde\phi)\colon \Cu(\tilde A)\to \Cu(\tilde B)$. 
Then, as shown in the previous section, we obtain 
a Cu-semigroup morphism  $\Cu(\tilde\phi)_{\cc}\colon \Cu_{\cc}(\tilde A)\to \Cu_{\cc}(\tilde B)$. From the commutativity
of the diagram
\[
\xymatrix{
\tilde A\ar[r]^{\tilde \phi}\ar[d]^{\pi}&\tilde B\ar[d]^{\pi}\\
\C\ar[r]^{\mathrm{id}}&\C
}
\]
we deduce that $\Cu_{\cc}(\tilde\phi)$ preserves the rank. In particular, it maps $\Cusim(A)$ to $\Cusim(B)$.
We  define $\Cusim(\phi)$ as the restriction of $\Cu(\tilde\phi)_{\cc}$ to $\Cusim(A)$. More concretely,
\[
\Cusim(\phi)(\ov{[a]}-n\ov{[1]})=\ov{[\tilde\phi\otimes\mathrm{id}(a)]}-n\ov{[1]}.
\] 
(Here $\mathrm{id}$ is the identity on $\mathcal K$, so that $\tilde\phi\otimes \mathrm{id}\colon \tilde A\otimes\mathcal K\to \tilde B\otimes \mathcal K$.)

\begin{theorem}\label{Cusim_morphisms}
$\Cusim(\phi)$ is a Cu-morphism.
\end{theorem}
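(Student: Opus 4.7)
The plan is to deduce the result directly from Theorem \ref{ccmaps} (which says $\Cu(\tilde\phi)_{\cc}$ is a Cu-morphism on the ambient semigroup $\Cu_{\cc}(\tilde A)$) together with the compatibility observations made in the proof of Theorem \ref{CusimCu}. The key fact is that $\Cusim(A)$, as a subsemigroup of $\Cu_{\cc}(\tilde A)$, inherits both its suprema of increasing sequences and its compact containment relation from the ambient semigroup. Since $\Cusim(\phi)$ is by definition the restriction of $\Cu(\tilde\phi)_{\cc}$, and since $\Cu(\tilde\phi)_{\cc}$ maps $\Cusim(A)$ into $\Cusim(B)$ (because it preserves rank), the Cu-morphism properties on the ambient semigroups will transfer to the restriction.

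First I would note that additivity, preservation of order, and preservation of the neutral element for $\Cusim(\phi)$ are immediate from the corresponding properties of $\Cu(\tilde\phi)_{\cc}$ established in Theorem \ref{ccmaps}. For preservation of sequential suprema, let $(x_n)_n$ be an increasing sequence in $\Cusim(A)$ with supremum $x$ in $\Cusim(A)$. From the proof of Theorem \ref{CusimCu} we know that this supremum coincides with the supremum of $(x_n)_n$ taken in $\Cu_{\cc}(\tilde A)$. Applying the supremum-preservation of $\Cu(\tilde\phi)_{\cc}$ from Theorem \ref{ccmaps}, we obtain $\Cu(\tilde\phi)_{\cc}(x) = \sup_n \Cu(\tilde\phi)_{\cc}(x_n)$ in $\Cu_{\cc}(\tilde B)$. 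Invoking the analogue of Theorem \ref{CusimCu} for $B$, this ambient supremum agrees with the supremum in $\Cusim(B)$, yielding $\Cusim(\phi)(x) = \sup_n \Cusim(\phi)(x_n)$.

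For preservation of compact containment, suppose $x \ll y$ in $\Cusim(A)$. The proof of Theorem \ref{CusimCu} shows that then $x \ll y$ also holds in $\Cu_{\cc}(\tilde A)$ (the argument there picks a $\ll$-increasing approximation of $y$ in $\Cu_{\cc}(\tilde A)$ and observes that its rank-zero tail must eventually dominate $x$). Since $\Cu(\tilde\phi)_{\cc}$ preserves $\ll$ on the ambient semigroup, we get $\Cu(\tilde\phi)_{\cc}(x) \ll \Cu(\tilde\phi)_{\cc}(y)$ in $\Cu_{\cc}(\tilde B)$; and since a compact containment in the ambient semigroup trivially restricts to a compact containment in the subsemigroup $\Cusim(B)$, the conclusion $\Cusim(\phi)(x) \ll \Cusim(\phi)(y)$ follows.

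There is no real obstacle here; the content has already been absorbed into Theorems \ref{ccmaps} and \ref{CusimCu}. The only delicate point is remembering that both directions of the identification of $\ll$ and of suprema between $\Cusim$ and the ambient $\Cu_{\cc}(\tilde{\cdot})$ are needed, one at the source and one at the target, so that the restriction of a Cu-morphism between ambient semigroups is automatically a Cu-morphism between the subsemigroups.
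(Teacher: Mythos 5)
Your proposal is correct and follows essentially the same route as the paper: both deduce the result from Theorem \ref{ccmaps} together with the observation, made in the proof of Theorem \ref{CusimCu}, that suprema of increasing sequences and the compact containment relation in $\Cusim(\cdot)$ agree with those of the ambient semigroup $\Cu_{\cc}(\widetilde{\cdot}\,)$. Your explicit separation of the two directions of this identification (at the source and at the target) is a slightly more detailed write-up of the same argument, not a different one.
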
	
\begin{proof}
In the course of the prove of Theorem \ref{CusimCu} we have shown that both the suprema of increasing sequences and the compact containment
relation in $\Cusim(A)$ agree with their counterparts taken in the larger ordered semigroup $\Cu_{\cc}(\tilde A)$. It is then clear that the preservation
of sequential suprema and of compact containment by $\Cusim(\phi)$ follow from the same properties for the map $\Cu_{\cc}(\tilde \phi)$. But, indeed,  $\Cu_{\cc}(\tilde \phi)$ is a Cu-morphism by Theorem \ref{ccmaps}. 
\end{proof}

\subsection{Comparison with the original definition}\label{cforiginal}
Let us  compare the definition of $\Cusim(A)$ given above with the original definition in \cite{Robert}. The main difference lies in the definition of the relation $\precsim_{\cc}$, which in \cite{Robert} is taken as $(x,e)\precsim_{\cc}(y,f)$
if 
\[
x+f+g\leq y+e+g
\]
for some compact element $g$. The rest of the construction is the same. Observe that the relation used in \cite{Robert} is a stronger relation, i.e., if $(x,e)\precsim_{\cc}(y,f)$ as in \cite{Robert} then $(x,e)\precsim_{\cc}(y,f)$ as defined above. If these two relations are the same in $\Cu(\tilde A)\times \Cu(\tilde A)_{\co}$, then the two constructions of $\Cusim(A)$ agree. In Corollary \ref{srm} below we show that this is the case for C*-algebras of finite stable rank. Thus, in this case the two constructions agree.

Let us recall the definition of the stable rank of a C*-algebra. Let $m\in \N$ and let $A$ be unital C*-algebra. An $m$-tuple $(a_1,\ldots,a_m)\in A^m$ is called left invertible if 
$\sum_{i=1}^m a_ib_i=1$ for some $(b_1,\ldots,b_m)\in A^m$. The C*-algebra
$A$ is said to have stable rank at most $m$ if the set of left invertible $m$-tuples
is dense in $A^m$.  This can be rephrased in the language of Hilbert C*-modules as follows: 
$A$ has stable rank at most $m$ if the left invertible operators from $A$ to $A^m$ are dense in the set of bounded operators from $A$ to $A^m$. The stable rank of $A$ is the least $m$ with this property. If now such $m$ exists then the stable rank is infinite.  For  non-unital C*-algebras the stable rank is defined as the stable rank of the unitization.

\begin{theorem}
Let $A$ be a unital C*-algebra of stable rank at most $m<\infty$. Let $H$ and $G$ be 
Hilbert C*-modules over $A$ such that $A^{m+1}\oplus H\cong A\oplus G$. 
Then $A^m\oplus H\cong G$.
\end{theorem}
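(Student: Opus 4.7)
The strategy is to interpret the statement as a cancellation theorem. Rewriting the hypothesis as $A \oplus (A^m \oplus H) \cong A \oplus G$, the goal becomes to cancel one $A$-summand from both sides, and the stable rank condition, combined with the fact that $A^m \oplus H$ contains $A^m$ as a direct summand, is precisely what provides the room to do so.

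More concretely, let $\varphi\colon A^{m+1} \oplus H \to A \oplus G$ be the given unitary isomorphism, and set $M := A^{m+1} \oplus H$. Let $p_1 \in \mathcal L(M)$ denote the projection onto the first $A$-summand of $M$, and let $p_e \in \mathcal L(M)$ denote the projection onto $\varphi^{-1}(A \oplus 0)$. Both $p_1 M$ and $p_e M$ are isomorphic to $A$ as Hilbert $A$-modules, and a partial isometry $v \in \mathcal L(M)$ built from $\varphi$ implements $p_1 \sim p_e$ (Murray--von Neumann equivalence) in $\mathcal L(M)$.

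The crux of the proof is to upgrade this Murray--von Neumann equivalence to a unitary equivalence $p_1 \sim_u p_e$ in $\mathcal L(M)$. Once a unitary $u \in \mathcal L(M)$ implementing the equivalence is produced, the relation $u(1 - p_1)u^* = 1 - p_e$ identifies the ranges $(1 - p_1)M = A^m \oplus H$ and $(1 - p_e)M \cong G$ (the latter via $\varphi$), giving the desired isomorphism. To produce $u$, one uses that the complement $1 - p_1$ dominates $m$ mutually orthogonal subprojections each Murray--von Neumann equivalent to $p_1$, coming from the $A^m$-summand of $A^m \oplus H = (1 - p_1)M$. Under the hypothesis $\mathrm{sr}(A) \leq m$, this excess of rank in $(1 - p_1)M$ allows a Rieffel-type cancellation argument---in essence, the density of left-invertible $m$-tuples in $A^m$ combined with a polar decomposition---to extend $v$ into a genuine unitary of $M$.

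The main obstacle is adapting Rieffel's classical cancellation theorem, originally formulated for finitely generated projective modules, to the present context in which $H$ and $G$ may be arbitrary Hilbert $A$-modules. This adaptation is largely formal: the whole construction takes place inside $\mathcal L(M)$, which contains $\mathcal L(A^{m+1}) \cong M_{m+1}(A)$ as a corner, and the ``passive'' summand $H$ is preserved through the perturbation and polar decomposition steps without any additional input beyond the stable rank of $A$ itself.
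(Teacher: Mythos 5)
Your overall strategy---recasting the statement as conjugating the projection $p_1$ onto the distinguished copy of $A$ to $p_e=\varphi^{-1}(A\oplus 0)$, and exploiting the copy of $A^m$ inside $(1-p_1)M$ together with the hypothesis $\mathrm{sr}(A)\leq m$---is the right one, and in spirit it is the same as the paper's. But there is a genuine gap at the decisive step: the claim that the Murray--von Neumann equivalence $p_1\sim p_e$ can be upgraded to a unitary equivalence by ``a Rieffel-type cancellation argument'' whose adaptation to arbitrary Hilbert C*-modules is ``largely formal.'' That adaptation \emph{is} the content of the theorem, so it cannot be quoted as a black box; and the standard device for upgrading $p\sim q$ to unitary equivalence, namely first producing $1-p_1\sim 1-p_e$, is exactly the conclusion you want, so that route is circular. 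Nothing in the proposal actually explains how the density of left-invertible $m$-tuples turns the partial isometry $v$ into a unitary of $M$, or equivalently yields the isomorphism of the complements.

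What is missing is the concrete mechanism, which the paper supplies. Write the isomorphism $V\colon A\oplus G\to A\oplus(A^m\oplus H)$ as a $2\times 2$ operator matrix. Since $\mathrm{sr}(A)\leq m$, the $B(A,A^m)$-component of the corner $V_{21}\in B(A,A^m\oplus H)$ can be perturbed in norm by less than $1$ to become left invertible, and a perturbation of $V$ of norm less than $1$ is still an invertible adjointable operator; note that one simply replaces $V$ by the perturbed operator rather than trying to return exactly to the original $p_e$, which sidesteps the difficulty your framing creates. Then a left inverse $W$ of the new $V_{21}$ is used to perform elementary row and column operations reducing $V$ to $\mathrm{diag}(1,V_{22}')$, so that $V_{22}'\in B(G,A^m\oplus H)$ is invertible, and the polar decomposition of $V_{22}'$ gives $G\cong A^m\oplus H$. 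If you flesh out your plan of ``extending $v$ to a genuine unitary of $M$,'' you will find yourself reproving precisely this perturbation-plus-elimination step; until it is written out, your proposal records the correct ingredients (perturbation via stable rank, polar decomposition, the passive role of $H$) but not a proof.
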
	
\begin{proof}
Let $H'=A^m\oplus H$, so that $A\oplus H'\cong A\oplus G$. 
Let $V\colon A\oplus G\to A\oplus H'$ be a  Hilbert C*-module isomorphism. Let us write $V$ in matrix form:
\[
\begin{pmatrix}
V_{11} & V_{12}\\
V_{21} & V_{22} 
\end{pmatrix}\in 
\begin{pmatrix}
B(A) & B(G,A)\\
B(A,H') & B(G,H') 
\end{pmatrix}.
\]
Recall that $H'=A^m\oplus H$. Let us write $V_{21}=(U_1,U_2)$, where $U_1\in B(A,A^m)$ and $U_2\in B(A,H)$. Let $U_1'\in B(A,A^m)$ be left invertible and $\|U_1-U_1'\|<1$.
Let $V_{21}'=(U_1',U_2)$.  Now  $V_{21}'$ is left invertible and $\|V_{21}'-V_{21}\|<1$. If we replace $V_{21}$ by $V_{21}'$ in the matrix
of $V$ we get $V'$ such that $\|V-V'\|<1$. Thus $V'$ is an invertible adjointable operator. Let us assume instead that $V$ is an invertible
adjointable operator such that $V_{21}\in B(A,H')$ is left invertible. Let $W\in B(A,H')$ be a left inverse of $V_{21}$. Set $(1-V_{11})W=X$. Then
\[
\begin{pmatrix}
1 & X\\
0 & 1 
\end{pmatrix}
\begin{pmatrix}
V_{11} & V_{12}\\
V_{21} & V_{22} 
\end{pmatrix}=
\begin{pmatrix}
1 & V_{12}'\\
V_{21} & V_{22} 
\end{pmatrix},
\]
and 
\[
\begin{pmatrix}
1 &  0\\
-V_{21} & 1 
\end{pmatrix}
\begin{pmatrix}
1 & V_{12}'\\
V_{21} & V_{22} 
\end{pmatrix}
\begin{pmatrix}
1 & -V_{12}'\\
0& 1 
\end{pmatrix}=
\begin{pmatrix}
1 & 0\\
0 & V_{22}' 
\end{pmatrix}.
\]
The map on the right side is still an invertible adjointable operator.  It follows that $V_{22}'\in B(G,H)$ is adjointable and invertible. Hence $G\cong H$ as Hilbert C*-modules, via the polar decomposition of $V_{22}'$.
\end{proof}

\begin{theorem}
Let $A$ be a unital C*-algebra of stable rank at most $m<\infty$. If $x,y\in \Cu(A)$ are such that $x+(m+1)[1]\leq y+[1]$ then $x+m[1]\leq y$.
\end{theorem}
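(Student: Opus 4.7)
The plan is to use R{\o}rdam's lemma to extract from the Cuntz subequivalence a Hilbert $A$-module isomorphism of the form to which the preceding theorem applies, then translate the conclusion back to the Cuntz semigroup. Write $x=[a]$ and $y=[b]$ with positive $a,b\in A\otimes\K=\K(H_A)$, placed together with $1_{m+1}$ and $1_1$ (the unit of $A$ tensored with rank-$(m+1)$ and rank-$1$ projections in $\K$) in pairwise orthogonal corners of $A\otimes\K$. Since $x=\sup_{\gamma>0}[(a-\gamma)_+]$ and sequential suprema distribute over addition, it suffices to prove $[(a-\gamma)_+]+m[1]\leq y$ for each $\gamma\in(0,1)$.

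Fix such a $\gamma$. By R{\o}rdam's lemma applied to $a\oplus 1_{m+1}\precsim_{\Cu} b\oplus 1_1$, choose $r\in A\otimes\K$ with
\[
r^*(b\oplus 1_1)r=(a-\gamma)_+\oplus(1-\gamma)\cdot 1_{m+1}.
\]
Set $w=(b\oplus 1_1)^{1/2}r$, so that $w^*w$ equals the above element and $\overline{wH_A}\subseteq H_b\oplus A$. Let $P=1_{m+1}$ and $Q=1-P$ (in the multiplier algebra of $A\otimes\K$). A direct computation gives $Pw^*wP=(1-\gamma)P$, $Qw^*wQ=(a-\gamma)_+$, and $Pw^*wQ=0$.

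Normalize: $V:=(1-\gamma)^{-1/2}\,wP\colon A^{m+1}\to H_b\oplus A$ satisfies $V^*V=\mathrm{id}_{A^{m+1}}$, so $V$ is isometric; adjointability is automatic because $A^{m+1}$ is free of finite rank. Hence the image of $V$ is orthogonally complemented in $H_b\oplus A$, yielding $H_b\oplus A\cong A^{m+1}\oplus K$ with $K=V(A^{m+1})^\perp$, and the preceding theorem then gives $H_b\cong A^m\oplus K$. Separately, the assignment $(a-\gamma)_+^{1/2}\xi\mapsto wQ\xi$ extends to an isometric $A$-module map $T\colon H_{(a-\gamma)_+}\to H_A$ (using $(wQ)^*(wQ)=(a-\gamma)_+$), and the orthogonality $Pw^*wQ=0$ forces its image $\overline{wQH_A}$ to be perpendicular to $V(A^{m+1})$, hence contained in $K$. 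Thus $H_{(a-\gamma)_+}$ embeds isometrically as a Hilbert sub-module of $K$, so $[(a-\gamma)_+]\leq[K]$ in $\Cu(A)$, and
\[
[(a-\gamma)_+]+m[1]\leq[K]+m[1]=[A^m\oplus K]=[H_b]=y.
\]
Taking the supremum over $\gamma$ completes the argument.

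The main obstacle I anticipate is the production of the clean Hilbert-module isomorphism $H_b\oplus A\cong A^{m+1}\oplus K$ out of what starts as a mere Cuntz subequivalence. The crucial enabling feature is that R{\o}rdam's lemma yields a $w$ whose $1_{m+1}$-component of $w^*w$ is a scalar multiple of a projection, so $V$ is isometric on a free finitely generated module, and both its adjointability and the orthogonal-complementation of its image come automatically; without this splitting, any direct attempt to produce the isomorphism (say by polar decomposition on all of $w$) would founder on the failure of $(a-\gamma)_+$ to have closed range in general.
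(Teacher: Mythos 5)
Your proposal is correct and follows essentially the same route as the paper's proof: both arguments reduce to the preceding cancellation theorem by embedding (an approximation of) the module representing $x$ together with $A^{m+1}$ into $H_b\oplus A$, using that the isometric image of the finitely generated free module $A^{m+1}$ is automatically orthogonally complemented, observing that the remaining piece lands in that complement $K$, and finally passing to a supremum over the approximations. The only real difference is presentational: you manufacture the embedding concretely from R{\o}rdam's lemma at the level of positive elements, with $(a-\gamma)_+$ playing the role of the compactly contained submodule $E$ of $H$, whereas the paper gets the same embedding by citing the Coward--Elliott--Ivanescu description of the order on $\Cu(A)$ in terms of compactly contained submodules.
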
		
\begin{proof}
We rely on the Hilbert C*-modules picture of $\Cu(A)$ (see \cite{Coward-Elliott-Ivanescu}). Suppose that $x=[H]$ and $y=[G]$, where $H$ and $G$ are countably generated Hilbert C*-modules over $A$. Let $E\subseteq H$ be a compactly contained submodule. The assumption $x+(m+1)[1]\leq y+[1]$ implies that $E\oplus A^{m+1}$ embeds as a Hilbert C*-submodule in $G\oplus A$. 
Let $\phi\colon E\oplus A^{m+1}\to G\oplus A$ be an embedding. Set $E'=\phi(0\oplus A^{m+1})^\perp$. That is, $E'$ is the orthogonal complement of the image of $A^{m+1}$ in $G\oplus A$. Since $A^{m+1}$ is a projective module, we have that $E'\oplus A^{m+1}\cong G\oplus A$.
By the previous theorem, $E'\oplus A^m\cong G$. Since $E$ embeds in $E'$ via $\phi$, $E\oplus A^m$ embeds in $G$. Hence $[E]+m[A]\leq [G]$. Passing to the supremum over all $E$ compactly contained in $H$ we get $[H]+m[A]\leq [G]$, as desired.
\end{proof}

\begin{corollary}\label{srm}
Let $A$ be a C*-algebra of  stable rank at most $m<\infty$. Let $[a],[b]\in \Cu(\tilde A)$. Then   $\ov{[a]} \leq \ov{[b]}$ in $\Cu_{\cc}(\tilde A)$ if and only if $[a]+m[1]\leq [b]+m[1]$ in $\Cu(\tilde A)$.	
\end{corollary}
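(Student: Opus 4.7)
The backward implication is immediate from the definition of $\precsim_{\cc}$: if $[a]+m[1]\leq [b]+m[1]$ then for every $[a]'\ll [a]$ the compact element $g=m[1]$ witnesses $[a]'+g\leq [b]+g$, so $(a,0)\precsim_{\cc}(b,0)$ and hence $\ov{[a]}\leq \ov{[b]}$.

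For the forward direction, I would first unpack the hypothesis $(a,0)\precsim_{\cc}(b,0)$: for every $[a]'\ll [a]$ there exists a compact $g\in \Cu(\tilde A)$ with $[a]'+g\leq [b]+g$. Since $[1]$ is a full compact element of $\Cu(\tilde A)$, there is $k\in \N$ with $g\leq k[1]$, and by the complementability of compact elements noted in Section \ref{ccconstruction} (a consequence of O5), there is a compact $g'$ with $g+g'=k[1]$. Adding $g'$ to both sides yields
\[
[a]'+k[1]\leq [b]+k[1].
\]

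The crux is now to reduce the number of $[1]$'s from $k$ down to $m$. If $k\leq m$, adding $(m-k)[1]$ to both sides suffices. If $k>m$, I would apply the preceding theorem iteratively: rewriting the inequality as
\[
\bigl([a]'+(k-m-1)[1]\bigr)+(m+1)[1]\leq \bigl([b]+(k-1)[1]\bigr)+[1],
\]
the theorem delivers $[a]'+(k-1)[1]\leq [b]+(k-1)[1]$; iterating $k-m$ times gives $[a]'+m[1]\leq [b]+m[1]$. Passing to the supremum over $[a]'\ll [a]$ and using O3 in $\Cu(\tilde A)$ produces $[a]+m[1]\leq [b]+m[1]$, as desired.

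The essential ingredient, and the only place where the stable rank hypothesis enters, is the cancellation theorem immediately preceding this corollary. Everything else is a direct manipulation of the Cu-semigroup axioms together with the fact that $[1]$ is a full compact unit for the subsemigroup of compact elements of $\Cu(\tilde A)$; the iteration to shave off one copy of $[1]$ at a time is routine once the $k=m+1$ case is in hand.
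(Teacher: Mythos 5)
Your proposal is correct and follows essentially the same route as the paper: extract $[a']+n[1]\leq [b]+n[1]$ from the definition of $\precsim_{\cc}$ (using that $[1]$ is a full compact element so the compact witness can be absorbed into a multiple of $[1]$), cancel down to $m$ copies of $[1]$ via the preceding stable-rank cancellation theorem, and pass to the supremum over $[a']\ll[a]$. The only difference is that you make explicit the one-copy-at-a-time iteration of the theorem and the complementation of the compact witness, steps the paper leaves implicit.
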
	

\begin{proof}
Let $[a']\ll [a]$. Then, by the definition of $\precsim_{\cc}$,  $[a']+n[1]\leq [b]+n[1]$ for some $n\in \N$. Since the stable rank of $\tilde A$ is at most $m$, by the previous theorem we have $[a']+m[1]\leq [b]+m[1]$. Passing to the supremum over all $[a']\ll [a]$, we get $[a]+m[1]\leq [b]+m[1]$.	
\end{proof}	

\subsection{Examples}
Let $X$ be a locally compact Hausdorff space and let $S$ be an ordered semigroup. Let  $\mathrm{Lsc}(X, S)$ denote the set of lower semicontinuous functions from $X$ to $S$, i.e., the set of $f\colon X\to S$ such that $\{x\in X\mid f(x)>s\}$ is open for every $s\in S$.  Let us regard $\mathrm{Lsc}(X, S)$ as an ordered semigroup endowed with the pointwise order and the pointwise addition operation. Let $\ov X=X\cup\{\infty\}$ denote the one-point compactification
of $X$. Let   $\mathrm{Lsc}_0(X, S)=\{f\in \mathrm{Lsc}(\ov X, S):f(\infty)=0\}$.

\begin{example}\label{CusimX} Let $X$ be a compact metrizable space of dimension at most 1. It was shown in \cite[Theorem]{Robert2} that the map
\begin{align*}
[f]\mapsto (x\in X \to \rank(f(x))),
\end{align*}
from $\Cu(C(X))$ to $\mathrm{Lsc}(X, \ov \N)$ is an order isomorphism. 

Notice that $C(X)$ has stable rank one. So we are free to use the simpler form of the relation $\sim_{\cc}$ used in \cite{Robert}. We thus get that $\Cu_{\cc}(C(X))$ is isomorphic to $\mathrm{Lsc}(X, \ov \Z)$ via the map 
\begin{align*}
[f]-[p]\mapsto \left(x\in X\to \rank(f(x))-\rank(p(x))\right), \quad \begin{aligned} &f,p\in \mathrm{C}(X,\K),\\& p \text{ a projection},\end{aligned}
\end{align*}
Further, since $C(X)$ is unital, 
\[
\Cusim(C(X))\cong \Cu_{\cc}(C(X))\cong \mathrm{Lsc}(X,\ov{\Z}\})
\] 
as ordered semigroups.
\end{example}

\begin{example}\label{CusimX0}	Let $X$ be a  locally compact, $\sigma$-compact, metrizable  space of dimension at most one. Let $\overline X=X\cup \{\infty\}$ denote its one-point compactification.  Then $\widetilde{C_0(X)}\cong C(\ov X)$. 
As argued in the previous example, $\Cu_{\cc}(\tilde X)\cong \mathrm{Lsc}(X, \ov{\Z}\})$. The rank map on  $\Cu_{\cc}(\tilde X)$, after its identification with $\mathrm{Lsc}(X, \ov\Z)$, is simply $\mathrm{Lsc}(X, \ov\Z)\ni g\mapsto g(\infty)\in \ov \Z$. Therefore, $\Cusim(C_0(X))\cong \mathrm{Lsc}_0(X, \ov\Z)$.
\end{example}

The following example illustrates how the positive and negative elements of $\Cusim(A)$ do not always span the entire $\Cusim(A)$
as a semigroup:

\begin{example} Here we calculate $\Cusim(C_0(\R^2))$. Let $S^2$ denote the 2-dimensional sphere.
Let us recall the computation of the Cuntz semigroup of $C(S^2)$  obtained in \cite{Robert2}.
Let $V(C(S^2))$ denote the Murray-von Neumann monoid of projections. Then 
\[
\Cu(C(S^2))\cong V(C(S^2))\sqcup \mathrm{Lsc}(S^2,\ov{\N})/\!\!\sim,
\] 
where $\sim$ is defined such that $n[1]\in V(S^2)$ and the constant function $n\in \mathrm{Lsc}(S^2,\ov{\N})$ are equivalent
for all $n=0,1,\ldots$. Addition on the right side is as follows: within the sets $V(C(S^2))$ and $\mathrm{Lsc}(S^2,\ov{\N})$
we simply use the addition operation with which these sets are endowed. If $[p]\in V(C(S^2))$ and $f\in \mathrm{Lsc}(S^2,\ov{\N})$
is non-constant then $[p]+f=\rank(p)+f\in \mathrm{Lsc}(S^2,\ov{\N})$. The order again need only be defined for 
$[p]\in V(C(S^2))$ and $f\in \mathrm{Lsc}(S^2,\ov{\N})$: $f\leq [p]$ if  $f\leq \rank(p) $  and $[p]\leq f$
if $\rank(p)\leq f$ and $f$ is non-constant. 

It is straightforward to calculate that 
\[
\Cu_{\cc}(C(S^2))\cong K_0(C(S^2))\sqcup \mathrm{Lsc}(S^2,\ov{\Z})/\!\!\sim,
\]
where $\sim$ identifies  $n[1]\in K_0(C(S^2))$ and the constant function $n\in \mathrm{Lsc}(S^2,\ov{\Z})$ for all $n\in \Z$.

Let us regard $C(S^2)$ as unitization of  $C_0(\R^2)$ and $S^2\cong \R^2\cup\{\infty\}$. 
The rank function on $\Cu_{\cc}(C(S^2))$ coming from this unitization  is then $\rank ([p]-[q])=\rank(p)-\rank(p)$ for $[p]-[q]\in K_0(C(S^2))$
and $\rank (f)=f(\infty)$ for $f\in \mathrm{Lsc}(S^2,\ov{\Z})$.  We thus get
\begin{align*}
\Cusim(C_0(\R^2)) &\cong K_0(\R^2)\sqcup \mathrm{Lsc}_0(\R^2,\ov{\Z})/\!\!\sim\\
&\cong \Z\sqcup \mathrm{Lsc}_0(\R^2,\ov{\Z})/\!\!\sim,
\end{align*}
where $\sim$ identifies $0\in \Z$ with $0\in \mathrm{Lsc}_0(\R^2,\ov{\Z})$.
Let us describe the order and addition on the set on the right.  Order: The elements of $\Z$ are pairwise not comparable. They are greater than any non-zero function 
in $\mathrm{Lsc}_0(\R^2,\ov{\Z})$ that is non-positive and  smaller than any non-zero function that is non-negative. The order on $\mathrm{Lsc}_0(\R^2,\ov{\Z})$ is pointwise. Addition:  if $n\in \Z$ and $f\in \mathrm{Lsc}_0(\R^2,\ov{\Z})$  then $n+f=f$. Observe that the subsemigroup
spanned by the positive and negative elements is $\mathrm{Lsc}_0(\R^2,\ov{\Z})$.
\end{example}

\section{Properties of the functor $\Cusim$}\label{Cusimfunctor}
By Theorems \ref{CusimCu} and \ref{Cusim_morphisms} of the  previous section, $A\mapsto \Cusim(A)$, $\phi\mapsto \Cusim(\phi)$, defines a functor from the category
of C*-algebras to the category Cu. Here we investigate its properties.
\subsection{Continuity with respect to inductive limits}
\begin{theorem}\label{Cusimlimits}
The functor $\Cusim$ is continuous, i.e, it preserves inductive limits.
\end{theorem}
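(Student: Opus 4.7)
The plan is to verify the two characterizing conditions L1 and L2 (from Section \ref{ccconstruction}) for the candidate inductive limit $\Cusim(A) = \varinjlim \Cusim(A_i)$, reducing to what is already known for $\Cu_{\cc}(\tilde A)$.

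First I would record a preparatory fact: the forced unitization functor $A \mapsto \tilde A$ preserves inductive limits of C*-algebras. This follows from the split extension $0 \to A \to \tilde A \to \C \to 0$ being natural in $A$ together with exactness of inductive limits in the C*-algebra category, so $\tilde A \cong \varinjlim \tilde A_i$ canonically. Composing with the already established continuity of $\Cu$ (Coward--Elliott--Ivanescu) and of the $\cc$-construction (Theorem \ref{SccContinuous}), the Cu-semigroup $\Cu_{\cc}(\tilde A)$ is the inductive limit of $\Cu_{\cc}(\tilde A_i)$ with structure maps $\Cu_{\cc}(\tilde\phi_{i,\infty})$. Moreover, the rank maps $\rank\colon \Cu_{\cc}(\tilde A_i)\to \ov\Z$ commute with the structure maps, because they come from the canonical quotient $\tilde A_i\to \C$, and so does the rank map on the limit.

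Now I would check L1. Let $s\in \Cusim(A)\subseteq \Cu_{\cc}(\tilde A)$. Applying L1 to the Cu-inductive system $(\Cu_{\cc}(\tilde A_i))$, I obtain elements $t_i\in \Cu_{\cc}(\tilde A_i)$ with $\Cu_{\cc}(\tilde\phi_{i,i+1})(t_i)\leq t_{i+1}$ and $s=\sup_i \Cu_{\cc}(\tilde\phi_{i,\infty})(t_i)$. Since $\rank$ is a Cu-morphism,
\[
0 = \rank(s) = \sup_i \rank\bigl(\Cu_{\cc}(\tilde\phi_{i,\infty})(t_i)\bigr) = \sup_i \rank(t_i).
\]
The sequence $\rank(t_i)$ is increasing in $\ov\Z$ with supremum $0$; since $0$ is a compact element of $\ov\Z$, the sequence is eventually equal to $0$. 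Discarding an initial segment, $t_i\in \Cusim(A_i)$ for all $i$, and this is the sequence witnessing L1 for $\Cusim$.

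For L2, suppose $s,t\in \Cusim(A_i)$ satisfy $\Cusim(\phi_{i,\infty})(s)\leq \Cusim(\phi_{i,\infty})(t)$ and $s'\ll s$ in $\Cusim(A_i)$. In the proof of Theorem \ref{CusimCu} it was shown that the compact-containment relation in $\Cusim(A_i)$ agrees with that in the ambient $\Cu_{\cc}(\tilde A_i)$, so $s'\ll s$ there as well. Because $\Cusim(\phi_{i,j})$ is by definition the restriction of $\Cu_{\cc}(\tilde\phi_{i,j})$, the order inequality also holds in $\Cu_{\cc}(\tilde A)$. Applying L2 to the inductive system $(\Cu_{\cc}(\tilde A_i))$ yields some $j\geq i$ with $\Cu_{\cc}(\tilde\phi_{i,j})(s')\leq \Cu_{\cc}(\tilde\phi_{i,j})(t)$, and restricting back gives $\Cusim(\phi_{i,j})(s')\leq \Cusim(\phi_{i,j})(t)$. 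This proves L2.

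The argument is largely a matter of bookkeeping; the only genuinely delicate point is the stabilization of $\rank(t_i)$ at $0$, which relies on $0$ being compact in $\ov\Z$. This is what lets the $\Cu_{\cc}(\tilde A)$-representatives of an element of $\Cusim(A)$ be cut down (after dropping finitely many terms) to honest representatives in the kernels $\Cusim(A_i)$.
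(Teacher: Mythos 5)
Your proof is correct and follows essentially the same route as the paper: pass to $\Cu_{\cc}(\tilde A)=\varinjlim \Cu_{\cc}(\tilde A_i)$ via continuity of unitization, $\Cu$, and the $\cc$-construction, then verify L1 by using compactness of $0$ in $\ov\Z$ to force the approximating elements to eventually have rank zero, and L2 by using that the order and compact containment in $\Cusim$ are inherited from the ambient $\Cu_{\cc}(\tilde A_i)$. The only cosmetic difference is that you phrase L1 with representatives $t_i\in \Cu_{\cc}(\tilde A_i)$ and discard an initial segment, whereas the paper works with the increasing sequence of images in the limit; these are the same argument.
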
	
\begin{proof}
Let $(A_{i},\phi_{i,j})_{i,j\in I}$	 be an inductive system of C*-algebras and $(A, \phi_{i,\infty})_{i\in I}$ its  inductive limit. Both the  forced unitization functor and the Cuntz semigroup functor are continuous (\cite{Antoine-Perera-Thiel}). Thus, we have 
$\Cu(\tilde A)=\varinjlim \Cu(\tilde A_{i})$.  Moreover, by Theorem \ref{SccContinuous}, applying the ${\cc}$-construction still yields an inductive limit of Cu-semigroups:  $\Cu_{\cc}(\tilde A)=\varinjlim \Cu_{\cc}(\tilde A_{i})$

For brevity of notation, let us set $S=\Cu_{\cc}(\tilde A)$, $S_i=\Cu_{\cc}(\tilde A_i)$, and $\alpha_{i,j}=\Cu_{\cc}(\tilde \phi_{i,j})$. Each of the Cu-semigroups $S_i$ and the Cu-semigroup $S$ carry a rank map into $\ov \Z$.
As argued in the definition of the functor $\Cusim(\cdot)$ in the previous section, the Cu-morphisms $\alpha_{i,j}$ and $\alpha_{i,\infty}$ map rank zero elements to rank zero elements. So their restrictions to the rank zero elements form an inductive system as well. In order to prove that the Cu-subsemigroup of rank zero elements in $S$ is the inductive limit of the Cu-subsemigroups of rank zero elements in the $S_i$s, we must show that conditions  L1 and L2 of an inductive limit in the category $\Cu$ are valid (see Section \ref{ccconstruction}).  This is quite straightforward: Say $\ov x-\ov e\in S$ has rank zero. By L1 applied to the inductive limit $S=\varinjlim S_i$, we  can choose an increasing sequence $(\ov x_n-\ov e)_{n=1}^\infty\subset  \bigcup \mathrm{im}(\alpha_{i,\infty})$ with supremum $\ov x-\ov e$. The terms of this sequence eventually have rank zero (since $0\ll 0$ in $\ov \Z$). This proves L1. Condition  L2  follows also from the same condition in  the inductive limit $S=\varinjlim S_i$, bearing in mind that the compact containment relation on the the set of rank zero elements agrees with the compact containment relation on the larger set $S$ restricted to the rank zero elements (as argued in the proof of Theorem \ref{CusimCu}).
\end{proof}

\subsection{Split exactness}
A projection $p$ in a C*-algebra is called finite if it is not Murray-von Neumann equivalent to a proper subprojection of itself. In terms of the monoid of Murray-von Neumann classes, this is expressible as $[p]_{\mathrm{MvN}}+[q]_{\mathrm{MvN}}=[p]_{\mathrm{MvN}}$ implies $[q]_{\mathrm{MvN}}=0$ for all $[q]_{\mathrm{MvN}}\in V(A)$. A unital C*-algebra is called stably finite if any $[p]_{\mathrm{MvN}}\in V(A)$ is finite; equivalently, if $n[1]_{\mathrm{MvN}}$ is finite for all $n\in \N$.
 
\begin{lemma}\label{ovone}
Let $A$ be a stably finite unital C*-algebra and let $x\in \Cu(A)$. If $\ov{x}\in \Cu_{\cc}(A)$ is compact then $x=[p]$ for some projection $p\in A\otimes\mathcal K$.
\end{lemma}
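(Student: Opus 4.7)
My plan is: first, show that $x+N[1]$ is a compact element of $\Cu(A)$ for some $N$; second, identify this with the Cuntz class of a projection in $A\otimes\K$; third, peel off the summand $N[1]$ via stable finiteness to obtain a projection representative of $x$.

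Take a $\ll$-increasing sequence $(x_n)_n$ in $\Cu(A)$ with $x=\sup_n x_n$. By Lemma \ref{ovCumap}, $(\overline{x_n})_n$ is $\ll$-increasing in $\Cu_{\cc}(A)$ with supremum $\overline{x}$; compactness of $\overline{x}$ then yields $\overline{x}=\overline{x_{n_0}}$ for some $n_0$. Unfolding $(x,0)\precsim_{\cc}(x_{n_0},0)$: for each $x'\ll x$ there is a compact $g\in\Cu(A)$ with $x'+g\leq x_{n_0}+g$. Since $A$ is unital, $[1]$ is a full compact element of $\Cu(A)$, so $g\leq N[1]$ for some $N\in\N$, and by the remark in Section \ref{ccconstruction} (O5 applied to compact elements) $N[1]=g+g'$ for some compact $g'$. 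Adding $g'$ and taking the supremum over $x'\ll x$ gives $x+N[1]\leq x_{n_0}+N[1]$; since $x_{n_0}\leq x$, in fact $x+N[1]=x_{n_0}+N[1]$. Now $x_{n_0}\ll x$ together with compactness of $N[1]$ and O4 yields $x_{n_0}+N[1]\ll x+N[1]$, so $x+N[1]$ is compact in $\Cu(A)$.

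A compact element of $\Cu(A)$ is the Cuntz class of a projection in $A\otimes\K$ (via the identification of compact $\Cu$-elements with finitely generated projective Hilbert $C^*$-modules, cf.~\cite{Coward-Elliott-Ivanescu}), so $x+N[1]=[q]$ for some projection $q\in A\otimes\K$. Choose a representative $a\in (A\otimes\K)_+$ of $x$ that is orthogonal to $1_N:=1_{M_N(A)}\in A\otimes\K$; then $a+1_N\sim_{\Cu} q$, so $a+1_N$ is Cuntz equivalent to a projection. This forces $0$ to be isolated in (or absent from) $\mathrm{sp}(a+1_N)=\mathrm{sp}(a)\cup\{1\}$, and hence also in $\mathrm{sp}(a)$. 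The spectral projection $p:=\chi_{(0,\infty)}(a)\in A\otimes\K$ is therefore well defined and Cuntz equivalent to $a$, giving $x=[p]$.

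The main obstacle is this last step, where stable finiteness of $A$ is the key hypothesis: it underwrites the clean identification of compact $\Cu$-elements with projection classes and prevents the projection summand $1_N$ in $q$ from being absorbed across the Cuntz equivalence $a+1_N\sim_{\Cu}q$, so that a projection representative of $x$ can indeed be extracted from one of $x+N[1]$.
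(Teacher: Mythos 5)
Your argument breaks down at the single step where you pass from ``for each $x'\ll x$ there is a compact $g$ with $x'+g\leq x_{n_0}+g$'' to ``$x+N[1]\leq x_{n_0}+N[1]$'' for one fixed $N$. The compact element $g$, and hence the bound $N$ with $g\leq N[1]$, depends on $x'$; the definition of $\precsim_{\cc}$ provides no uniform choice. To take the supremum over $x'\ll x$ with a fixed $N$ you would need either a uniform bound on $N(x')$ or the ability to cancel, i.e.\ to pass from $x'+N'[1]\leq x_{n_0}+N'[1]$ with $N'>N$ to the same inequality with $N$; neither is available, since weak cancellation can fail in $\Cu(A)$ (as the paper notes just before Corollary \ref{weakcancellation}), and nothing bounds the $N(x')$. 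Note moreover that your intermediate claim---that $x+N[1]$ is compact in $\Cu(A)$ for some $N$---is essentially equivalent to the conclusion of the lemma (in the stably finite case it forces, via \cite[Proposition 5.7]{Brown-Ciuperca}, that $0$ is isolated in the spectrum of a representative of $x$), so it cannot be obtained by such a soft supremum argument. A smaller point: the blanket statement that a compact element of $\Cu(A)$ is always the class of a projection, which you attribute to \cite{Coward-Elliott-Ivanescu}, is exactly where stable finiteness and \cite{Brown-Ciuperca} are needed; it is not available in that generality.

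The paper's proof is organized precisely to avoid the uniformity problem: it argues level by level. For each $n$ it uses only the single relation $x_n\ll x_{n+1}$ together with $\ov{x_{n+1}}=\ov{x_1}$ to produce its own $k_n$ with $x_n+k_n[1]=x_1+k_n[1]$ compact, deduces from \cite[Proposition 5.7]{Brown-Ciuperca} (summands of compact elements are compact, and compact classes come from projections) that $x_n=[p_n]$ for projections $p_n$, and then uses stable finiteness to cancel: writing $[p_1]+[q]=[p_n]$ gives $[p_1]+k_n[1]+[q]=[p_1]+k_n[1]$, hence $[q]=0$, so all $[p_n]$ coincide and $x=\sup_n x_n=[p_1]$. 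Your second and third paragraphs (orthogonal representative, spectrum of $a+1_N$) are fine in themselves, but they rest on the unjustified compactness of $x+N[1]$; replacing your supremum step by the levelwise argument above recovers the paper's proof.
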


\begin{proof}
We will make use of the characterization given in \cite{Brown-Ciuperca} of the  compact elements in the Cuntz semigroup of a stably finite C*-algebra. By \cite[Proposition 5.7]{Brown-Ciuperca},  if $A$ is unital and stably finite and $[a]\in \Cu(A)$ is compact, then $0$ is an isolated point of the spectrum of $a$. It follows, by functional calculus, that we can choose a projection $p$ such that $[a]=[p]$. It also follows that if $[a]+[b]$ is compact then both $[a]$ and $[b]$ are compact. Indeed,  once  0 is an isolated point of the spectrum  of $a\oplus b$, it is also an isolated point of the spectrum of $a$ and $b$. 

Let $(x_n)_n$ be rapidly increasing with supremum $x$. Since the map $z\mapsto \ov{z}$
preserves sequential suprema, $\ov{x_n}=\ov{x}$ for all sufficiently large $n$. Assume without loss of generality that this is the case for all $n$. Fix $n$. From $\ov{x_{n+1}}=\ov{x_1}$ we get  that  $x_n+k_n[1]\leq x_1+k_n[1]$ for some $k_n\in \N$. It follows that $x_n+k_n[1]=x_1+k_n[1]$ is compact, and so, as remarked above, $x_n=[p_n]$  for some projection $p_n$. Moreover, by stable finiteness, we must have that $[p_1]=[p_{n}]$. Indeed, if we write $[p_{1}]+[q]=[p_{n}]$, with $q$ a projection, then $[p_1]+k_n[1]+[q]=[p_1]+k_n[1]$, which implies that $[q]=0$. Thus, $[p_1]=[p_n]$ for all $n$, and so $x=[p_1]$.
\end{proof}

\begin{theorem}\label{splitexact}
Let
\[
\xymatrix{
0\ar[r]& I \ar[r]^\iota\  & A\ar[r]^\pi & A/I\ar[r] &0
}
\]
be a short exact sequence of C$^*$-algebras. Apply the functor $\Cusim$ to it to get
\[
\xymatrix{
\Cusim(I) \ar[r]^{\Cusim(\iota)}\  & \Cusim(A)\ar[r]^{\Cusim(\pi)} & \Cusim(A/I).
}
\]
The following are true:
\begin{enumerate}[(i)]
\item 
If $\tilde A/I$ is a stably finite C*-algebra then $\mathrm{Im}(\Cusim(\iota))=\mathrm{Ker}(\Cusim(\pi))$.

\item 
If $\pi$ splits then $\Cusim(\pi)$ is surjective and $\Cusim(\iota)$ is an order embedding. 
\end{enumerate}
\end{theorem}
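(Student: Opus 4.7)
For part (i), the containment $\mathrm{Im}(\Cusim(\iota))\subseteq\mathrm{Ker}(\Cusim(\pi))$ is automatic from functoriality and $\pi\circ\iota=0$; tracing through the definition of $\Cusim$ on the zero map, one checks that $\Cusim(\pi)\circ\Cusim(\iota)$ sends $\ov{[c]}-n\ov{[1_{\tilde I}]}$ to $n\ov{[1]}-n\ov{[1]}=0$. For the reverse containment, let $z=\ov{[b]}-n\ov{[1]}\in\Cusim(A)$ with $\Cusim(\pi)(z)=0$; unfolding gives $\ov{[\tilde\pi(b)]}=n\ov{[1]}$ in $\Cu_{\cc}(\widetilde{A/I})$. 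Since $n\ov{[1]}$ is compact and $\widetilde{A/I}$ is stably finite, Lemma~\ref{ovone} shows $[\tilde\pi(b)]$ is the Cuntz class of a projection, and cancellation of projections in stably finite C*-algebras then yields $[\tilde\pi(b)]=n[1]$ in $\Cu(\widetilde{A/I})$. I would then invoke the description of $\Cu(\widetilde{A/I})$ as the Cu-quotient of $\Cu(\tilde A)$ by the ideal $\Cu(I)$ (as in \cite{Ciuperca-Robert-Santiago}) to conclude that $[b]$ and $n[1]$ differ by elements of $\Cu(I)$, and apply O5 in $\Cu(\tilde A)$ to extract a single $[c]\in\Cu(I)$ with $\ov{[b]}=n\ov{[1]}+\ov{[c]}$ in $\Cu_{\cc}(\tilde A)$. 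Since $[c]\in\Cu(I)\subseteq\Cu(\tilde I)$ has rank zero, this expresses $z$ as $\Cusim(\iota)(\ov{[c]})$.

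For part (ii), surjectivity of $\Cusim(\pi)$ is immediate: $\Cusim(\pi)\circ\Cusim(s)=\Cusim(\pi\circ s)=\mathrm{id}$. To show that $\Cusim(\iota)$ is an order embedding, suppose $\Cusim(\iota)(z_1)\leq\Cusim(\iota)(z_2)$ with $z_i=\ov{[c_i]}-n_i\ov{[1_{\tilde I}]}\in\Cusim(I)$. Unfolding the $\sim_{\cc}$-definition produces, for every $[c_1']\ll[c_1]$ in $\Cu(\tilde I)$, a compact $g\in\Cu(\tilde A)_{\co}$ such that
\[
[\tilde\iota c_1']+n_2[1_{\tilde A}]+g\leq[\tilde\iota c_2]+n_1[1_{\tilde A}]+g\quad\text{in }\Cu(\tilde A).
\]
The key step is to use the splitting $\tilde s\colon\widetilde{A/I}\to\tilde A$ of $\tilde\pi$ to replace $g$ by a compact element of $\Cu(\tilde I)$: applying $\Cu(\tilde s)\circ\Cu(\tilde\pi)$ to the inequality produces a companion inequality living entirely inside $\Cu(\tilde s)(\Cu(\widetilde{A/I}))\subseteq\Cu(\tilde A)$, and the $\widetilde{A/I}$-part can be absorbed into the multiples of $[1_{\tilde A}]$ appearing on both sides. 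The residual inequality then involves only elements of $\Cu(\tilde I)\subseteq\Cu(\tilde A)$, and the fact that $\Cu(\iota)\colon\Cu(I)\to\Cu(\tilde A)$ is an order embedding onto an ideal (by \cite{Ciuperca-Robert-Santiago}) transfers it to $\Cu(\tilde I)$. Recombining through the definition of $\sim_{\cc}$ in $\Cu(\tilde I)$ gives $z_1\leq z_2$ in $\Cusim(I)$.

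The main obstacle throughout is the passage between the Cuntz semigroups of $\tilde I$, $\tilde A$, and $\widetilde{A/I}$ in a manner fully compatible with the $\cc$-construction. In part (i) this amounts to converting the ``equality modulo the ideal $\Cu(I)$'' supplied by Cu-exactness into a clean formal-difference equality in $\Cu_{\cc}$; the stable finiteness hypothesis is essential here, as it is precisely what allows projection-cancellation to lift the equality from $\Cu_{\cc}(\widetilde{A/I})$ back to $\Cu(\widetilde{A/I})$. In part (ii) the obstacle is the careful decomposition of a compact element of $\Cu(\tilde A)$ using the splitting, so as to transfer the inequality from $\Cu(\tilde A)$ down to $\Cu(\tilde I)$; the argument requires no analytic input beyond the standard Cuntz semigroup exactness results, but careful bookkeeping of ranks and compact elements is needed.
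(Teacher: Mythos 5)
Both parts of your proposal have genuine gaps, and in both cases the gap is exactly where the paper has to do real work. For (i), the step ``apply O5 in $\Cu(\tilde A)$ to extract a single $[c]\in\Cu(I)$ with $\ov{[b]}=n\ov{[1]}+\ov{[c]}$'' does not follow and is in general false: the quotient description of $\Cu(\widetilde{A/I})$ only gives one-sided dominations modulo elements of $\Cu(I)$ (each side is below the other plus some ideal element, after cutting down by $\ll$), and since Cu-semigroups have no subtraction you cannot convert this into the clean identity you want. Worse, your conclusion would exhibit $z$ as $\ov{[c]}$ with $[c]\in\Cu(I)$, hence as a \emph{positive} element, while $\mathrm{Ker}(\Cusim(\pi))$ contains non-positive elements: already for $I=A=\C$ the element $-\ov{[1_\C]}$ is in the kernel and is not of that form (though it is of course in $\mathrm{Im}(\Cusim(\iota))$). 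A smaller issue: ``cancellation of projections in stably finite C*-algebras'' to pass from $[\tilde\pi(b)]+k[1]=n[1]+k[1]$ to $[\tilde\pi(b)]=n[1]$ is not available (stable finiteness does not give cancellation); this one is harmless, since one can simply replace $[b]$ by $[b]+k[1]$ and $n$ by $n+k$, which is what the paper does. The genuinely missing idea is that one must produce a representative of the class lying in $\tilde I\otimes\K$ with the correct rank: the paper uses stable finiteness twice more --- via Brown--Ciuperca to make $\tilde\pi(a)$ an honest projection by functional calculus, and to upgrade its Cuntz equivalence with $1_n$ to Murray--von Neumann equivalence --- and then conjugates $a$ by a unitary in the connected component of the identity (which lifts along $\tilde\pi$ because it is connected to $1$) so that the conjugate lands in $\tilde I\otimes\K$. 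Your purely Cu-level argument has no mechanism for this lifting.

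For (ii), surjectivity is fine, but the proposed ``key step'' --- applying $\Cu(\tilde s)\circ\Cu(\tilde\pi)$ to the inequality and ``absorbing the $\widetilde{A/I}$-part into the multiples of $[1]$'' --- is not a legitimate operation: there is no way to subtract the companion inequality from the original one inside $\Cu(\tilde A)$, and no ``residual inequality in $\Cu(\tilde I)$'' is produced. The cited fact from Ciuperca--Robert--Santiago concerns $\Cu(I)\hookrightarrow\Cu(A)$ (rank-zero elements, an ideal), whereas your elements $[\tilde\iota c_i']+n_j[1]+g$ lie in the image of $\Cu(\tilde I)$ and have nonzero rank; whether inequalities between such elements in $\Cu(\tilde A)$ descend to $\Cu(\tilde I)$ is precisely the content to be proved, and $\Cu(\tilde I)\to\Cu(\tilde A)$ is not an order embedding in general. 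The paper's proof is genuinely analytic, not formal: it reduces to showing that $[a]\leq[b]$ in $\Cu(\tilde A)$ with $a=1_n+a'$, $b=1_n+b'$, $a',b'\in I\otimes\K$ selfadjoint, implies $[a]\leq[b]$ in $\Cu(\tilde I)$, and proves this using the almost stable rank one property of $A\otimes\K$ and of $\tilde I\otimes\K$ (BRTTW), the splitting at the C*-algebra level to form $u_1=(\tilde\lambda\circ\tilde\pi)(u)$ and the unitary $uu_1^*$ of $(\tilde I\otimes\K)^\sim$, and an approximate-unit argument showing $a_1$ and $u_1^*a_1u_1$ are approximately unitarily equivalent in $\tilde I\otimes\K$. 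So the claim that ``no analytic input beyond standard Cuntz semigroup exactness'' is needed is exactly where the proposal breaks down.
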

\begin{proof}
(i) Let us first go over some notation: We denote by $\tilde\pi\colon \tilde A\to \widetilde{A/I}$ the unital extension of $\pi$.
Further, we also denote by $\tilde\pi$ the homomorphism $\tilde \pi\otimes \mathrm{id}$ from $\tilde A\otimes\mathcal K$
to $\widetilde{A/I}\otimes\mathcal K$. The same conventions apply to $\widetilde{\iota}$.
Finally, we identity $\widetilde{A/I}$ with $\widetilde{A}/I$ and  regard $\tilde I$ as a subalgebra of $\tilde A$.

From $\pi\circ\iota=0$ we get that $\Cusim(\pi)\circ\Cusim(\iota)=0$. Therefore,
$\mathrm{Im}(\Cusim(\iota))\subseteq \mathrm{Ker}(\Cusim(\pi))$. Next we show the opposite inclusion.

Let $\ov{[a]}-n\ov{[1]}$ be an element of $\Cusim(A)$ mapped to 0 by $\Cusim(\pi)$. 
That is, $\ov{[\tilde \pi(a)]}=n\ov{[1]}$ in $\Cu_{\cc}(\tilde A/I)$. Since $n\ov{[1]}$ is compact, we have by Lemma \ref{ovone} that $[\tilde\pi(a)]$ is a compact element of 
$\Cu(\tilde{A}/I)$. From the definition of $\precsim_\cc$, this means that  $[\tilde \pi(a)]+k[1]=n[1]+k[1]$ for some $k\in \N$. Let us replace $[a]$ by $[a]+k[1]$ and $n$ by $n+k$, which does not change $\ov{[a]}-n\ov{[1]}$, so as to now have $[\tilde \pi(a)]=n[1]$ in $\Cu(\tilde A/I)$. Since $\tilde \pi(a)$ is Cuntz equivalent to a projection and we have assumed that $\tilde A/I$ is stably finite, 0 is an isolated point of the spectrum of $\tilde \pi(a)$ (\cite[Proposition 5.7]{Brown-Ciuperca}). Hence, for a suitable choice of a strictly positive $f\in C_0(\R^+)$,  $\tilde\pi(f(a))$  is a projection. Let $a'=f(a)$. Then $[a']=[a]$ (since we have chosen $f$  strictly positive) and $\tilde\pi(a')$ is a projection. Let us simply rename $a'$ as $a$ and assume that $\tilde\pi(a)$ is a projection. Now  $\tilde \pi(a)$ and  $1_n$ are Cuntz equivalent projections in $(\widetilde{A}/I)\otimes\mathcal K$. Since $\widetilde{A}/I$ is stably finite, $\tilde \pi(a)$ and $1_n$ are Murray-von Neumann equivalent. In a stable C*-algebra, if $p,q$ are Murray-von Neumann equivalent projections then there exists a unitary $u$ in the unitization of the algebra  such that $upu^*=q$; moreover  $u$ may be chosen in the connected component of the identity. (Proof: If $\|p-q\|<1$ this is well known. From this we deduce the case when  $p$ and $q$ are homotopic. But in a stable C*-algebra Murray-von Neumann equivalent projections are homotopic.) Applying this fact  to $\tilde \pi(a)$ and $1_n$, we obtain a unitary  $u$ in the unitization of $(\widetilde A/I)\otimes \mathcal K$  such that $u\pi(a)u^*=1_n$ and   $u$ is in the connected component of the identity. Since $u$ is connected to the identity, it has a  lift  to a unitary $v$ in the unitization of $\tilde A\otimes\mathcal K$. Set $v^*av=a_1\in \tilde A\otimes \mathcal K$. Then $[a]=[a_1]$ and $\tilde \pi(a_1)=1_n$. The latter implies that $a_1\in \tilde I\otimes\mathcal K$. Thus, 
\[
\ov{[a]}-n\ov{[1]}=\ov{[a_1]}-n\ov{[1]}\in \mathrm{Im}(\Cusim(\iota)),
\] 
as desired.

(ii) Let $\lambda\colon A/I\to A$ be  a homomorphism such that $\pi\circ \lambda=\mathrm{id}_{A/I}$. Then $\Cusim(\psi)\circ\Cusim(\lambda)=\mathrm{id}_{\Cusim(A/I)}$, which implies that $\Cusim(\psi)$ is surjective.

Let us prove that $\Cusim(\phi)$ is an order embedding. We maintain the conventions introduced in the first paragraph of the proof of (i). Let $a,b\in \tilde I\otimes\mathcal K$ be positive elements such that $\ov{[a]}-n\ov{[1]}$ and $\ov{[b]}-n\ov{[1]}$ belong to $\Cusim(A)$
and $\ov{[a]}-n\ov{[1]}\leq \ov{[b]}-n\ov{[1]}$. Then $\rank(a)=\rank(b)=n$ (these are the ranks of their images in $\mathcal K$ by the canonical map $\tilde I\otimes\mathcal K\to \mathcal K$)
and $\ov{[a]}\leq \ov{[b]}$ in $\Cu_{\cc}(\tilde A)$. We will show that $\ov{[a]}\leq \ov{[b]}$
in $\Cu_{\cc}(\tilde I)$. Choose $[a']\ll [a]$. Then  there exists $k\in \N$ such that  $[a']+k[1]\leq [b]+k[1]$ in $\Cu(\tilde A)$. Suppose that we have shown that $[a']+k[1]\leq [b]+k[1]$ in $\Cu(\tilde I)$. Since $[a']\ll [a]$ has been chosen arbitrarily, we conclude that $\ov{[a]}\leq \ov{[b]}$ in $\Cu_{\cc}(\tilde I)$. It thus suffices to show that if $[a]\leq [b]$ in $\Cu(\tilde A)$ and $\rank(a)=\rank{b}=n$, then $[a]\leq [b]$ in $\Cu(\tilde I)$.
We prove this next.

We may assume without loss of generality that $a=1_n+a'$ and  $b=1_n+b'$, for some selfadjoint elements  $a',b'\in I\otimes \mathcal K$. Let $\epsilon>0$. Since $[a]\leq  [b]$ in $\Cu(\tilde A)$, there exists $x\in \tilde A\otimes\mathcal K$ such that $(a-\epsilon/2)_+=x^*x$ and $xx^*\leq Mb$ for some $M>0$. By the almost stable rank one property of $A\otimes \mathcal K$ (\cite[Lemma 4.3.2]{BRTTW}) there exists a unitary $u\in (A^\sim\otimes\mathcal K)^\sim$ such that  $u^*(a-\epsilon)_+u\leq Mb$.

Consider the elements
\begin{align*}
u_1 &=(\tilde \lambda\circ\tilde \pi)(u)\in (\tilde A\otimes\mathcal K)^\sim,\\
a_1 &=(uu_1^*)^*a(uu_1^*)\in \tilde I\otimes\mathcal K.
\end{align*}
We have $(\tilde\lambda \circ\tilde\pi)(uu_1^*)=1$, which implies that $uu_1^*$ is a unitary in the unitization of $\tilde I\otimes\mathcal K$. Thus, $a$ and $a_1$ are unitarily equivalent in $\tilde I\otimes \mathcal K$, which implies that  $[(a-\epsilon)_+]=[(a_1-\epsilon)_+]$ in 
$\Cu(\tilde I)$.  From $u^*(a-\epsilon)_+u\leq Mb$ and the definition of $u_1$  we get that $u_1^*(a_1-\epsilon)_+u_1\leq Mb$. Thus, on one hand we have that $[(a-\epsilon)_+]=[(a_1-\epsilon)_+]$ in $\Cu(\tilde I)$, and on the other hand we have that $u_1^*(a_1-\epsilon)_+u_1\leq Mb$. We  show next that $[(a_1-\epsilon)_+]=[u_1^*(a_1-\epsilon)_+u_1]$ in $\Cu(\tilde I)$. In fact, we will show that $a$ and $a_1$ are approximately unitarily equivalent in $\tilde I\otimes \mathcal K$ (with unitaries chosen in the unitization of this C*-algebra).

Applying $\tilde\lambda\circ\tilde\pi$ on both sides of $u_1^*(a_1-\epsilon)_+u_1\leq Mb$, and using that $(\tilde\lambda\circ\tilde\pi)(u_1)=u_1$,  we get that $u_1^*1_nu_1=1_n$, i.e., $u_1$ commutes with $1_n$. We also have that 
\[
(\tilde \lambda \circ\tilde \pi)(a_1)=(\tilde \lambda \circ\tilde\pi)(a)=1_n.
\] 
This implies that  $a_1=1_n+a_1'$ for some selfadjoint $a_1'\in I\otimes\mathcal K$. 
Let us choose an approximate unit $(e_\lambda)\in I_+$ of $I$. For each $\lambda$ and $k\in \N$ define
\[
v_{\lambda,k}=1_n+(u_1-1_n)(e_\lambda\otimes 1_k).
\]
Then $v_{\lambda,k}$ belongs to $\tilde I\otimes\mathcal K$ for all $\lambda$ and $k$. We have
\[
v_{\lambda,k}a_1'=1_na_1'+(u_1-1_n)(e_\lambda\otimes 1_k)a_1'\to u_1a_1',
\]
as $\lambda,k\to \infty$. Here we have used that $(e_{\lambda}\otimes 1_k)a_1'\to a_1'$ as $\lambda,k\to \infty$. Similarly, we deduce that $v_{\lambda,k}^*a_1'\to u_1^*a_1'$. Since $u_1-1_n$ and $e_\lambda\otimes 1_k$ commute with $1_n$, it is clear from its definition
that $v_{\lambda,k}$ commutes with $1_n$ for all $\lambda,k$.

Since $\tilde I\otimes\mathcal K$ has almost stable rank 1 (\cite[Lemma 4.3.2]{BRTTW}),  for each $\lambda,k$ there exists a unitary $w_{\lambda,k}$ in the unitization of $\tilde I\otimes\mathcal K$ such that $\|v_{\lambda,k}-w_{\lambda,k}|v_{\lambda,k}|\|<\frac{1}{k}$. 
Then 
\[
w_{\lambda,k}^*a_1' w_{\lambda,k}\to u_1^*a_1'u_1\hbox{ and  }w_{\lambda,k}^*1_n w_{\lambda,k}\to 1_n,\hbox{ as }\lambda,k\to \infty.\] 
It follows that $w_{\lambda,k}^*a_1w_{\lambda,k}\to u_1^*a_1u_1$. So $a$ and $a_1$ are approximately unitarily equivalent in $\tilde I\otimes \mathcal K$.  We now have 
\[
(a-\epsilon)_+\sim_{\Cu} (a_1-\epsilon)_+\sim_{\Cu} u_1^*(a_1-\epsilon)_+u_1\leq Mb,
\]
where the Cuntz comparisons are all taken in $\tilde I\otimes\mathcal K$.  Since $\epsilon>0$ is arbitrary, we conclude that $[a]\leq [b]$ in $\Cu(\tilde I)$. 
\end{proof}

\begin{corollary}\label{Cusimsums}
Let $A$ and $B$ be  C$^*$-algebras. Let $\iota_A\colon A\to A\oplus B$
and $\iota_B \colon B\to A\oplus B$ denote the standard inclusions. Then  
$\gamma\colon \Cusim(A)\oplus \Cusim(B)\to\Cusim(A\oplus B)$  given by
\[
\gamma(x+y):=\Cusim(\iota_A)(x)+\Cusim(\iota_B)(y)
\]
is an isomorphism of ordered semigroups.
\end{corollary}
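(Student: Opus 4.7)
The plan is to construct a two-sided inverse $\Psi\colon \Cusim(A\oplus B)\to\Cusim(A)\oplus\Cusim(B)$ of $\gamma$ via
\[
\Psi(z)=(\Cusim(\pi_A)(z),\Cusim(\pi_B)(z)),
\]
where $\pi_A,\pi_B$ are the coordinate projections $A\oplus B\to A$ and $A\oplus B\to B$. Each component is a Cu-morphism by Theorem~\ref{Cusim_morphisms}, so $\Psi$ is a Cu-morphism into the direct sum. The C$^*$-algebra identities $\pi_A\iota_A=\mathrm{id}_A$, $\pi_B\iota_B=\mathrm{id}_B$, $\pi_A\iota_B=0$, $\pi_B\iota_A=0$, pushed through $\Cusim$, give at once $\Psi\circ\gamma=\mathrm{id}_{\Cusim(A)\oplus\Cusim(B)}$, forcing $\gamma$ to be injective; since $\Psi$ preserves the order, $\gamma$ also reflects it and is therefore an order embedding. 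That $\gamma$ itself is a Cu-morphism (preserving addition, order, sequential suprema, and compact containment) follows from the same properties of $\Cusim(\iota_A)$ and $\Cusim(\iota_B)$ together with axioms O3 and O4 in the codomain.

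What remains is to show $\gamma$ is surjective. For a positive element $z\in\Cusim(A\oplus B)$, Lemma~\ref{ontoq} yields $z=\overline{[c]}$ with $c\in((A\oplus B)\otimes\K)_+$; using $\Cu(A\oplus B)\cong\Cu(A)\oplus\Cu(B)$ to write $c=\iota_A(a)+\iota_B(b)$ with orthogonal summands yields $z=\gamma(\overline{[a]},\overline{[b]})$. For a general $z=\overline{[c]}-n\overline{[1]}\in\Cusim(A\oplus B)$ (with $\rank c=n$), I would use the pullback identification $\widetilde{A\oplus B}=\tilde A\times_\C\tilde B$ to split $c=(c_A,c_B)$ with $\pi_\C(c_A)=\pi_\C(c_B)$ of rank $n$, and take
\[
u=\overline{[c_A]}-n\overline{[1_{\tilde A}]}\in\Cusim(A),\qquad v=\overline{[c_B]}-n\overline{[1_{\tilde B}]}\in\Cusim(B).
\]
Since $\tilde\iota_A$ and $\tilde\iota_B$ are unital, the claim $\gamma(u,v)=z$ then reduces to the identity
\[
[\tilde\iota_A(c_A)]+[\tilde\iota_B(c_B)]=[c]+n[1]\quad\text{in }\Cu(\widetilde{A\oplus B}).
\]

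The hard part will be this last identity. In the pullback, $\tilde\iota_A(c_A)=(c_A,\pi_\C(c_A)\cdot 1_{\tilde B})$ and $\tilde\iota_B(c_B)=(\pi_\C(c_B)\cdot 1_{\tilde A},c_B)$; placing the two Cuntz summands in orthogonal $2\times 2$ blocks of $\K$ and comparing to the orthogonal sum of $c$ and $n[1]$, one sees that both sides have the same image $([c_A]+n[1_{\tilde A}],[c_B]+n[1_{\tilde B}])$ under the induced map $(\Cu(\tilde\pi_A),\Cu(\tilde\pi_B))\colon\Cu(\widetilde{A\oplus B})\to\Cu(\tilde A)\oplus\Cu(\tilde B)$. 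Closing the gap inside $\Cu(\widetilde{A\oplus B})$, that is, promoting the coordinatewise equality to an equality in the pullback, requires either checking injectivity of this induced map or exhibiting an explicit Cuntz comparison in $\widetilde{A\oplus B}\otimes\K$ that respects the pullback constraint $\pi_\C(u_A)=\pi_\C(u_B)$ on multipliers. Once the identity is in hand, $\gamma\circ\Psi=\mathrm{id}$ and the proof is complete.
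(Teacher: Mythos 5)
There is a genuine gap, and it sits exactly where you flag it: the surjectivity of $\gamma$ on non-positive elements. Your reduction is fine up to the displayed identity $[\tilde\iota_A(c_A)]+[\tilde\iota_B(c_B)]=[c]+n[1]$ in $\Cu(\widetilde{A\oplus B})$ (in fact you would only need the corresponding equality of classes in $\Cu_{\cc}(\widetilde{A\oplus B})$, which is weaker), but you never prove it. Checking that both sides agree after applying $(\Cu(\tilde\pi_A),\Cu(\tilde\pi_B))$ does not close the gap: the assertion that this map is injective (or an order embedding) on $\Cu$ of the pullback $\tilde A\times_\C\tilde B$ is a substantive claim, not a formal one --- the Cuntz semigroup does not in general respect pullbacks, and Cuntz comparison of non-projection elements is not implemented by a single partial isometry that one could correct to satisfy the constraint $\pi_\C(u_A)=\pi_\C(u_B)$. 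The alternative you mention, ``exhibiting an explicit Cuntz comparison in $\widetilde{A\oplus B}\otimes\K$ respecting the pullback constraint,'' is precisely the hard C*-algebraic content of Theorem \ref{splitexact}, whose proof needs lifting of unitaries in the connected component of the identity, the almost stable rank one property of $\tilde I\otimes\K$, and an approximate unitary equivalence argument. So as written, the crux of the corollary is asserted rather than proved.

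The paper takes a different and shorter route: it applies Theorem \ref{splitexact} to the split exact sequence $0\to A\to A\oplus B\to B\to 0$ and runs a five-lemma-style diagram chase comparing $\Cusim(A)\oplus\Cusim(B)$ with $\Cusim(A\oplus B)$, so all the genuinely C*-algebraic work is quarantined in that theorem. Your first half (the inverse candidate $\Psi=(\Cusim(\pi_A),\Cusim(\pi_B))$, the identity $\Psi\circ\gamma=\mathrm{id}$ via $\pi_A\iota_A=\mathrm{id}$, $\pi_A\iota_B=0$, and the resulting order embedding) is correct and is essentially a clean repackaging of the injectivity part, parallel to Theorem \ref{splitexact}(ii); the positive-element case of surjectivity via Lemma \ref{ontoq} and $\Cu(A\oplus B)\cong\Cu(A)\oplus\Cu(B)$ is also fine. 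To finish, either invoke Theorem \ref{splitexact}(i)--(ii) and do the diagram chase (noting that $z-\gamma(0,\Cusim(\pi_B)(z))$-type manipulations must be replaced by the kernel/image statement, since $\Cusim$ is not a group), or supply a genuine proof of your key identity; the latter amounts to redoing the exactness argument of Theorem \ref{splitexact}(i) for the ideal $A\subseteq A\oplus B$ and is not a routine verification.
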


\begin{proof}
Consider the diagram
\[
\xymatrix{
0\ar[r]& \Cusim(A) \ar[r]^-{\alpha}\ar@{=}[d]  & \Cusim(A)\oplus \Cusim(B)\ar[r]^-{\beta}\ar[d]^\gamma & \Cusim(B) \ar[r]\ar@{=}[d] &0\\
0\ar[r] & \Cusim(A) \ar[r]^-{\Cusim(\iota_A)} & \Cusim(A\oplus B)\ar[r]^-{\Cusim(\pi_B)}& \Cusim(B) \ar[r] &0,
}
\]
where $\alpha(x)=(x,0)$, $\beta(x,y)=y$, and $\pi_B\colon A\oplus B\to B$ is the quotient map.
A simple diagram chase using the exactness of the rows of this diagram (in the sense of Proposition \ref{splitexact} (i) and (ii)) shows that $\gamma$
is an isomorphism, as desired.
\end{proof}

\subsection{Stability}
\begin{theorem}\label{isoembed} Let $A$ be a C$^*$-algebra.
The inclusion $A\hookrightarrow A\otimes\mathcal K$ (in the top corner) induces an isomorphism
of ordered semigroups
$\Cusim(A)\to \Cusim(A\otimes\mathcal K)$.
\end{theorem}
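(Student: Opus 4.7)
The plan is to reduce to matrix stability via the continuity of $\Cusim$, and then to prove $\Cusim(A)\cong\Cusim(A\otimes M_n)$ for each $n$ by a direct lifting argument modelled on the proof of Theorem~\ref{splitexact}(i).

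Writing $A\otimes\K=\varinjlim_n A\otimes M_n$ with top-corner transition maps, Theorem~\ref{Cusimlimits} gives $\Cusim(A\otimes\K)=\varinjlim_n\Cusim(A\otimes M_n)$. Since the top-corner $A\to A\otimes\K$ factors through each $A\otimes M_n$, it suffices to show that each top-corner $\iota\colon A\to A\otimes M_n$ induces an isomorphism $\Cusim(\iota)\colon\Cusim(A)\to\Cusim(A\otimes M_n)$. Let $\tilde\iota\colon\tilde A\to(A\otimes M_n)^\sim$ denote the unital extension of $\iota$.

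For surjectivity of $\Cusim(\iota)$, take $\ov{[x]}-m\ov{[1]}\in\Cusim(A\otimes M_n)$ with $\rank(x)=m$. Since $\K$ is stably finite and $[\pi(x)]=m[1]$ is a compact class in $\Cu(\K)$, the argument used in Theorem~\ref{splitexact}(i)---cutting by a strictly positive $f\in C_0(\R^+)$ so that $\pi(f(x))$ becomes a rank-$m$ projection, then conjugating by a unitary in the connected component of the identity in the unitization of $(A\otimes M_n)^\sim\otimes\K$, lifted via the almost-stable-rank-one property \cite[Lemma~4.3.2]{BRTTW} together with the homotopy equivalence of Murray--von Neumann equivalent projections in stable C*-algebras---lets us replace $x$ by a Cuntz-equivalent representative with $\pi(x)=1_m$. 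Then $x-1\otimes 1_m\in A\otimes M_n\otimes\K\cong A\otimes\K$, and stability of $\Cu$ applied to the top-corner $A\otimes\K\hookrightarrow A\otimes M_n\otimes\K$ furnishes $a_0\in(A\otimes\K)_+\subset(\tilde A\otimes\K)_+$ such that $\tilde\iota(a_0)$ is Cuntz-equivalent to $x-1\otimes 1_m$ in the ideal. Setting $b=a_0+1_{\tilde A}\otimes 1_m$, so that $\tilde\iota(b)=\tilde\iota(a_0)+1\otimes 1_m$ and $\rank(b)=m$, a further approximate-unitary-equivalence argument---with the conjugating unitary chosen in the commutant of $1\otimes 1_m$ inside the unitization of $(A\otimes M_n)^\sim\otimes\K$---shows that $[\tilde\iota(b)]$ and $[x]$ agree modulo a compact, hence $\ov{[\tilde\iota(b)]}=\ov{[x]}$ in $\Cu_{\cc}((A\otimes M_n)^\sim)$ and $\Cusim(\iota)(\ov{[b]}-m\ov{[1]})=\ov{[x]}-m\ov{[1]}$.

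For the order embedding, a parallel argument applies: assuming $\Cusim(\iota)(\ov{[a]}-m\ov{[1]})\leq\Cusim(\iota)(\ov{[b]}-m\ov{[1]})$, the defining condition of $\precsim_{\cc}$ together with the fullness of the compact $\ov{[1]}$ yields $[\tilde\iota(a')]+k[1]\leq[\tilde\iota(b)]+k[1]$ in $\Cu((A\otimes M_n)^\sim)$ for every $[a']\ll[a]$ and some $k\in\N$; normalising as in the surjectivity step and invoking the top-corner $\Cu$-iso on the ideal then lifts this comparison to $[a']+k[1]\leq[b]+k[1]$ in $\Cu(\tilde A)$, whence $\ov{[a]}\leq\ov{[b]}$ in $\Cu_{\cc}(\tilde A)$. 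The principal obstacle throughout is the lifting in the surjectivity step: Cuntz equivalence is not preserved under addition of a scalar term like $1\otimes 1_m$, so identifying $[\tilde\iota(b)]$ with $[x]$ (even up to compact addends) requires the Theorem~\ref{splitexact}(i)-style approximate-unitary-equivalence machinery to be run inside the commutant of $1\otimes 1_m$, which is the technically most delicate point.
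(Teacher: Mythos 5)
Your first reduction (continuity of $\Cusim$ plus factoring the top corner through the finite matrix stages) is exactly how the paper begins, but from there your direct lifting argument has a genuine gap at precisely the point you flag yourself. After normalizing $x$ so that $\pi(x)=1_m$, the element $x-1\otimes 1_m$ lies in $A\otimes M_n\otimes\K$ but is only selfadjoint, not positive, so the phrase ``$\tilde\iota(a_0)$ is Cuntz-equivalent to $x-1\otimes 1_m$'' is not defined; and even if one repairs this by some positive replacement, the decisive step --- passing from a Cuntz comparison of $\iota(a_0)$ with the ``ideal part'' of $x$ to the identity $\ov{[\tilde\iota(b)]}=\ov{[x]}$ in $\Cu_{\cc}((A\otimes M_n)^\sim)$ after adding back the common, non-orthogonal scalar block $1\otimes 1_m$ --- has no supporting principle: Cuntz classes are not additive over non-orthogonal summands, and no cancellation at the level of $\Cu$ of the unitization is available to ``subtract'' $1\otimes 1_m$. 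The promised approximate-unitary-equivalence ``run inside the commutant of $1\otimes 1_m$'' is not supplied, and unlike in Theorem \ref{splitexact}(i) the normalized element does not land in the image of $\tilde A\otimes\K$ (that image is $(\C 1+A\otimes e_{11})\otimes\K$, much smaller than the preimage of $\C 1_m$), so the argument there does not transfer. The order-embedding half has the same defect: pulling $[\tilde\iota(a')]+k[1]\leq[\tilde\iota(b)]+k[1]$ back to $\Cu(\tilde A)$ amounts to an order-embedding property of $\Cu(\tilde\iota)\colon\Cu(\tilde A)\to\Cu((A\otimes M_n)^\sim)$, which is not a consequence of the top-corner $\Cu$-isomorphism on the ideal (the class $[1_{\tilde A}]$ does not live in the ideal) and is not established.

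The paper avoids all of this after the common first step. For unital $A$ it invokes Proposition \ref{unitalCusim}, which identifies $\Cusim$ with $\Cu_{\cc}$ of the algebra itself, and then the known stability of $\Cu$ (the top corner $A\hookrightarrow M_2(A)$ is a $\Cu$-isomorphism) immediately gives the result, since $\Cu_{\cc}$ is functorial. For non-unital $A$ it compares the split exact sequences $0\to A\to A^\sim\to\C\to 0$ and $0\to M_2(A)\to M_2(A^\sim)\to M_2(\C)\to 0$, applies $\Cusim$, and runs a five-lemma-type diagram chase using Theorem \ref{splitexact} (exactness at the middle for stably finite quotient unitization, surjectivity and order-embedding from the splitting) together with the unital case for the two right-hand vertical arrows. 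If you want to salvage your approach, the cleanest fix is to adopt that route: prove the unital case via Proposition \ref{unitalCusim} and $\Cu$-stability, then deduce the general case by the split-exactness diagram chase rather than by lifting representatives by hand.
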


\begin{proof}
Consider the inductive limit
\[
A\hookrightarrow M_2(A)\hookrightarrow M_4(A)\hookrightarrow \dots \hookrightarrow A\otimes \mathcal K.
\]
By the continuity of $\Cusim$ with respect to   inductive limits, it suffices to show that
the inclusion in the top corner $A\hookrightarrow M_2(A)$ induces an isomorphism
at the level of $\Cusim$. We prove this next.

Let us first assume that $A$ is unital. By Proposition \ref{unitalCusim}, the map $\Cu_{\cc}(\iota)\colon \Cu_{\cc}(A)\to \Cusim(A)$ is an isomorphism. Similarly, the inclusion of $M_2(A)$ in $M_2(A)^\sim$ induces an ordered semigroup isomorphism	$\Cu_{\cc}(\iota)\colon \Cu_{\cc}(M_2(A))\to \Cusim(M_2(A))$.
We thus have  the diagram
\[
\xymatrix{
\Cu_{\cc}(A)\ar[r]\ar[d]& 	\Cu_{\cc}(M_2(A))\ar[d]\\
\Cusim(A)\ar[r] & \Cusim(M_2(A))}
\]
where the horizontal arrows are induced by the inclusion $A\mapsto M_2(A)$	 applying the functors $\Cu_{\cc}(\cdot)$
and $\Cusim$.
It is known that $A\hookrightarrow M_2(A)$ induces an isomorphism in $\Cu$  (see \cite[Appendix 6]{Coward-Elliott-Ivanescu}). This in turn implies that the top horizontal arrow is an isomorphism from $\Cu_{\cc}(A)$ to $\Cu_{\cc}(M_2(A))$. It follows that  the bottom horizontal arrow is an isomorphism as well.

The non-unital case is reduced to the unital case as follows.
Let $A$ be a non-unital  C$^*$-algebra. Consider the diagram
\[
\xymatrix{
0\ar[r]& A \ar[r]\ar[d]  & A^\sim\ar[r]\ar[d]& \C \ar[r]\ar[d] &0\\
0\ar[r] & M_2(A) \ar[r] & M_2(A^\sim)\ar[r]& M_2(\C) \ar[r] &0,
}
\]
where the rows form short exact sequences that split and the vertical
arrows are the natural inclusions.
Applying the functor  $\Cusim$ we get
\[
\xymatrix{
0\ar[r]& \Cusim(A) \ar[r]\ar[d]  & \Cusim(A^\sim)\ar[r]\ar[d]& \Cusim(\C) \ar[r]\ar[d] &0\\
0\ar[r] & \Cusim(M_2(A)) \ar[r] & \Cusim(M_2(A^\sim))\ar[r]& \Cusim(M_2(\C)) \ar[r] &0.
}
\]
A diagram chase---as in the proof of the five lemma---using the exactness of the rows of this diagram (in the sense of Proposition \ref{splitexact} (i) and (ii)),
and that the two rightmost vertical arrows  are isomorphisms, shows that $\Cusim(A)\to \Cusim(M_2(A))$
is an isomorphism.
\end{proof}

From the stability of $\Cusim$ we derive the following proposition, which will be needed later on:
\begin{proposition}\label{almostcc}
Suppose that $A$ is $\sigma$-unital. Let $x,y\in \Cu(A)$ be such that $q(x)\leq q(y)$ in $\Cusim(A)$. Let $z\in \Cu(A)$ be a full element (i.e., such that $\infty \cdot z$ is the largest element in $\Cu(A)$). Then for every $x'\ll x$ there exist $n\in \N$ and $y'\ll y$ such that $x'+nz\leq y'+nz$.	
\end{proposition}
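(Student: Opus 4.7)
My plan is three-fold: unpack the hypothesis $q(x)\leq q(y)$ to get an inequality in $\Cu(\tilde A)$ involving $N[1]$, extract a $y'\ll y$ while still working in $\Cu(\tilde A)$, and finally transfer the inequality from $\Cu(\tilde A)$ down to $\Cu(A)$ by replacing the compact $[1]$-multiple with a multiple of the full element $z$.

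\emph{Unpacking.} By the definition of $\precsim_{\cc}$, $q(x)\leq q(y)$ means $(x,0)\precsim_{\cc}(y,0)$, i.e.\ for every $x_0\ll x$ there is a compact $g\in\Cu(\tilde A)_{\co}$ with $x_0+g\leq y+g$ in $\Cu(\tilde A)$. Since $[1]$ is a full compact element of $\Cu(\tilde A)$, $g\leq N[1]$ for some $N\in\N$; by O5 we find $g'$ with $N[1]=g+g'$, and adding $g'$ yields $x_0+N[1]\leq y+N[1]$.

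\emph{Extracting $y'\ll y$.} Interpolate $x'\ll x''\ll x$ via O2 and apply the above with $x_0=x''$, obtaining $x''+N[1]\leq y+N[1]$. Since $N[1]$ is compact, axiom O4 applied to $x'\ll x''$ and $N[1]\ll N[1]$ gives $x'+N[1]\ll x''+N[1]$. Writing $y=\sup_k y_k$ for a $\ll$-increasing sequence $(y_k)$ in $\Cu(A)$, O3 gives $y+N[1]=\sup_k(y_k+N[1])$, and the compact containment forces $x'+N[1]\leq y_k+N[1]$ in $\Cu(\tilde A)$ for some $k$. Set $y':=y_k$, so $y'\ll y$ and $x'+N[1]\leq y'+N[1]$.

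\emph{Transferring to $\Cu(A)$.} By the stability of $\Cusim$ (Theorem~\ref{isoembed}) and the identity $\Cu(A)=\Cu(A\otimes\K)$, we may assume $A$ is stable; choose a strictly positive contraction $h\in A_+$, so $[h]=\infty\cdot z=\sup_m mz$ is the top of $\Cu(A)$. Let $a,b\in(A\otimes\K)_+$ represent $x'$ and $y'$, and let $1_N$ denote the rank-$N$ scalar projection in $\C\cdot 1\otimes\K\subseteq\tilde A\otimes\K$. The inequality $x'+N[1]\leq y'+N[1]$ unpacks as $(a+1_N-\epsilon)_+\precsim b+1_N$ in $\tilde A\otimes\K$ for every $\epsilon>0$. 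Invoking the almost stable rank one property of $\tilde A\otimes\K$ (\cite[Lemma~4.3.2]{BRTTW}) as in the proof of Theorem~\ref{splitexact}(ii), one realizes this comparison by a unitary $u$ in the unitization of $\tilde A\otimes\K$ with $u^*(a+1_N-\epsilon)_+u\leq M(b+1_N)$, adjusts $u$ by a lift of its image in $M_N(\C)$ so that the comparison takes place inside the unitization of $A\otimes\K$, and compresses by $h^{1/2}$ in the $1_N$-block. The compressed comparison lives in $A\otimes\K$ and shows $x'+[p]\leq y'+[p]$ in $\Cu(A)$ for some $p\in(A\otimes\K)_+$ with $[p]\leq N[h]=\infty z=\sup_m mz$. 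Taking a $\ll$-predecessor $[p_0]\ll[p]$ via O2 (and strengthening $x'$ and $y'$ if needed through the interpolation $x'\ll x''$ already in hand), compact containment yields $[p_0]\leq nz$ for some finite $n$, and a final O5 application (using $[p_0]\leq nz\leq[p]$) allows us to replace the absorption element $[p]$ on both sides by $nz$, giving $x'+nz\leq y'+nz$ in $\Cu(A)$.

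The main obstacle is the transfer step: one must simultaneously control the perturbation of the witnessing unitary and the compression by $h^{1/2}$ so that the Cuntz-comparison inequality survives in $\Cu(A)$, and then absorb the compressed analogue of $N[1]$ into a finite multiple of the full element $z$ via compact containment against the supremum $\infty z=\sup_m mz$.
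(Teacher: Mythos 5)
Your first two steps are fine and coincide with how the paper starts: from $q(x)\leq q(y)$ one gets, for each $x'\ll x$, some $N$ and $y'\ll y$ with $x'+N[1]\leq y'+N[1]$ in $\Cu(\tilde A)$. The genuine gap is in the transfer step, and it is not a technicality that can be absorbed into "controlling the perturbation": your compression scheme cannot produce the finiteness that is the whole content of the proposition. First, compressing an operator inequality by $r$ only gives $r^*cr\leq r^*dr$, and $r^*cr\precsim c$; after conjugating by the unitary $u$ the left-hand side is no longer block diagonal, so "compressing in the $1_N$-block" is not even a well-defined operation on it, and any honest compression that lands the right-hand side in $A\otimes\K$ replaces the unit by (at best) the strictly positive element $h$, whose class is $[h]=\infty z$. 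Thus the inequality you claim to obtain, $x'+[p]\leq y'+[p]$ with only $[p]\leq N[h]=\infty z$, is vacuous: $\infty z$ is the largest element of $\Cu(A)$, so $y'+[p]$ can simply be the largest element. Second, the closing move is wrong as stated: you invoke "$[p_0]\leq nz\leq [p]$" and a "final O5 application" to replace $[p]$ by $nz$ on both sides, but O5 produces complements, not replacements. To bound the right-hand side by $y'+nz$ you would need $[p]\leq nz$ for a finite $n$, which is exactly what is missing; and even granting $[p]\leq nz$, the complement trick only yields an inequality of the shape $x'+nz\leq y'+2nz$, with mismatched multiples, whereas the statement (and its later use, e.g.\ in factoring functionals through $q$) requires the same $n$ on both sides.

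The missing idea is the paper's reduction via Brown's theorem: writing $z=[c]$ with $c$ full, one passes to the hereditary subalgebra $C=\overline{c(A\otimes\K)c}$, so that under the induced isomorphisms $\Cu(A)\cong\Cu(C)$ and $\Cusim(A)\cong\Cusim(C)$ the element $z$ becomes the class of a strictly positive element of $C$. Then, instead of any compression argument, one applies to $x'+n[1]\leq y'+n[1]$ in $\Cu(\tilde C)$ the ideal-restriction map $[H]\mapsto [HC]$ from \cite{Ciuperca-Robert-Santiago}, which is additive, order preserving, fixes classes coming from $\Cu(C)$, and sends $[1]$ to exactly $z$; this yields $x'+nz\leq y'+nz$ with the same $n$. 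Without first making $z$ the class of a strictly positive element, projecting or compressing into the ideal can only ever replace $[1]$ by $\infty z$, which is why your argument, as written, cannot close.
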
	

\begin{proof}
Let $z=[c]$, where $c\in A\otimes\mathcal K$ is full. Let $C=\overline{c(A\otimes\mathcal K)c}$. Then $C\otimes \mathcal K\cong A\otimes\mathcal K$ by Brown's theorem. This isomorphism induces
isomorphisms  $\Cu(A)\cong \Cu(C)$ and $\Cusim(A)\cong\Cusim(C)$.
Let $\tilde x,\tilde y, \tilde z\in \Cu(C)$ denote the images of $x,y,z$ via the isomorphism of $\Cu(A)$ and $\Cu(C)$. We claim that  $q(\tilde x)\leq q(\tilde y)$ in $\Cusim(C)$. Indeed, consider  the following diagram:
\[
\xymatrix{
\Cu(A)\ar[d]^q\ar[r]&\Cu(A\otimes\mathcal K)\ar[r]\ar[d]^q&\Cu(C\otimes\mathcal K)\ar[d]^q\ar[l]& \Cu(C)\ar[l]\ar[d]^q\\
\Cusim(A)\ar[r]&\Cusim(A\otimes\mathcal K)\ar[r] &\Cusim(C\otimes\mathcal K)\ar[l]& \Cusim(C)\ar[l]
}
\]	
All squares in this diagram commute. This is  straightforward to check  for the first and third squares. The middle square commutes since the horizontal maps are induced by an isomorphism at the C*-algebra level.  It thus suffices to prove the proposition for $\tilde x,\tilde y,\tilde z$,
in $\Cu(C)$. Let $x'\ll \tilde x$. From  $q(\tilde x)\leq q(\tilde y)$ we get that 
\begin{align}\label{toproject}
x'+n[1]\leq y'+n[1]
\end{align} 
in $\Cu(\tilde C)$, for some $y'\ll \tilde y$ and some $n\in \N$. We can now ``project" all the terms of this inequality to the ideal $\Cu(C)$ in $\Cu(\tilde C)$. This operation is more explicit
in the Hilbert C*-modules picture of the Cuntz semigroup: given a Hilbert C*-module $H$ over a C*-algebra $B$ and a $\sigma$-unital closed two-sided ideal $I$, the map on the Cuntz semigroup $[H]\mapsto [HI]$ is well defined, additive, order preserving, and supremum preserving; see \cite[Section 2 and Theorem 5]{Ciuperca-Robert-Santiago}.  Applying the map $[H]\mapsto [HC]$
in the inequality \eqref{toproject}, we get
\[
x'+nz\leq y'+nz.
\] 
This proves the proposition.
\end{proof}

\section{Compact elements and functionals}\label{K0andtraces}
Here we investigate  the compact elements and functionals on the augmented Cuntz semigroup.
\subsection{Compact elements}
Let $A$ be a C*-algebra. Let $V(A)$ denote the semigroup of Murray-von Neumann classes of projections in $A\otimes\mathcal K$.  Given a projection $p\in A\otimes\mathcal K$, we denote by $[p]_{MvN}$ the element in $V(A)$ with representative $p$. Since the Murray-von Neumann equivalence relation is stronger than the relation of Cuntz equivalence, the map  $V(A)\ni [p]_{MvN}\mapsto [p]\in \Cu(A)$  is well defined. Further, this map ranges in the subsemigroup of compact elements of $\Cu(A)$. 

Recall that 
\[
K_0(A)=\{[p]_{MvN}-n[1]_{MvN}:p\in \tilde A\otimes\mathcal K,\,  \rank(p)=n\}.\] 
We thus have a map
\begin{equation}\label{K0map}
K_0(A)\ni [p]_{MvN}-n[1]_{MvN}\mapsto \ov{[p]}-n\ov{[1]}\in \Cusim(A).
\end{equation}
It is easy to show that this is a well defined additive map.

A unital C*-algebra is called stably finite if $n[1]_{MvN}+[q]_{MvN}=n[1]_{MvN}$ implies $[q]_{MvN}=0$ in $V(\tilde A)$.  If $A$ is stably finite, then $K_0(A)$ is an ordered group under the  order induced by the image of $V(A)$ in $K_0(A)$.

\begin{theorem}\label{Cusimcompacts}
Let $\Cusim_{\co}(A)$ denote the set of compact elements of $\Cusim(A)$, i.e, the set of  $x\in \Cusim(A)$ such that $x\ll x$.
\begin{enumerate}[(i)]
\item
$\Cusim_{\co}(A)$ is  a group.  
\item
If $x+y\in \Cusim_{\co}(A)$ then $x,y\in \Cusim_{\co}(A)$.
\item
If $A$ is stably finite then $\Cusim_{\co}(A)$ is isomorphic,  as an ordered group, to $K_0(A)$ via the map \eqref{K0map}. 	
\end{enumerate}
\end{theorem}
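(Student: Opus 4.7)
For part (i), the plan is to use Theorem~\ref{CusimCu}(ii) to manufacture additive inverses within the compact elements. Given $x\in\Cusim_{\co}(A)$, applying that property with $x'=x$ (valid since $x\ll x$) yields $z$ with $x+z\leq 0\leq x+z$, i.e., $x+z=0$. To check that $z$ is itself compact, suppose $z\leq \sup_n z_n$ with $(z_n)_n$ increasing; adding $x$ and applying O3 gives $0=x+z\leq \sup_n(x+z_n)$, and compactness of $0$ (axiom O0) yields $0\leq x+z_{n_0}$ for some $n_0$; adding $z$ and using $x+z=0$ gives $z\leq z_{n_0}$. Thus $\Cusim_{\co}(A)$ is a group.

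For part (ii), given $x+y\in\Cusim_{\co}(A)$, part (i) supplies $w:=-(x+y)$. Then $x+w$ is an additive inverse of $y$, since $y+(x+w)=(x+y)+w=0$. If $x\leq\sup_n x_n$ with $(x_n)_n$ increasing, then by O3 we have $x+y\leq\sup_n(x_n+y)$; compactness of $x+y$ yields $x+y\leq x_{n_0}+y$ for some $n_0$, and adding $x+w$ to both sides cancels $y$ on the right to give $x\leq x_{n_0}$. Hence $x\ll x$, and by symmetry $y\ll y$.

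For part (iii), I would verify in order that the map $\Phi\colon [p]_{\mathrm{MvN}}-n[1]_{\mathrm{MvN}}\mapsto \ov{[p]}-n\ov{[1]}$ of \eqref{K0map}: (a) lands in $\Cusim_{\co}(A)$, using Lemma~\ref{ovCumap} to see $\ov{[p]}$ is compact, part (i) above to see $-\ov{[1]}$ is compact, and O4; (b) is surjective, by taking $w\in\Cusim_{\co}(A)$, writing it as $\ov x-n\ov{[1]}$ with $\rank x=n$, and observing that $\ov x=w+n\ov{[1]}$ is compact so Lemma~\ref{ovone} (applicable since $\tilde A$ is stably finite) provides a projection $p$ with $x=[p]$; (c) is injective, which I discuss below; (d) preserves the positive cone, by using Lemmas~\ref{ontoq} and~\ref{ovone} to write any positive $w\in\Cusim_{\co}(A)$ as $\ov{[q]}$ for a projection $q\in A\otimes\mathcal K$, and transferring the identification to $K_0(A)$ via injectivity.

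The main obstacle is the injectivity (c). Suppose $\ov{[p]}=n\ov{[1]}$ in $\Cu_{\cc}(\tilde A)$; unwinding $\sim_{\cc}$ using compactness of $[p]$ and $n[1]$ yields a compact $g\in\Cu(\tilde A)$ with $[p]+g\leq n[1]+g$ and $n[1]+g\leq [p]+g$. By \cite[Proposition~5.7]{Brown-Ciuperca} and stable finiteness of $\tilde A$, $g=[r]$ for a projection $r$. In stably finite C*-algebras, Cuntz subequivalence of projections coincides with Murray--von Neumann subequivalence, and mutual subequivalence yields equivalence; I would deduce $p\oplus r\sim_{\mathrm{MvN}} 1_n\oplus r$, whence $[p]_{\mathrm{MvN}}-n[1]_{\mathrm{MvN}}=0$ in $K_0(\tilde A)$ and a fortiori in $K_0(A)$. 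This is the step that relies most essentially on stable finiteness.
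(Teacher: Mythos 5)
Your proposal is correct, and it runs on the same key ingredients as the paper (Theorem~\ref{CusimCu}(ii) for inverses, Lemma~\ref{ovone} for surjectivity, comparison of projections plus stable finiteness for the $K_0$ identification), but it is organized somewhat differently. In (i) and (ii) the paper produces the inverse from Theorem~\ref{CusimCu}(ii) and then invokes weak cancellation (Corollary~\ref{weakcancellation}); your direct verifications using O0 and O3 (and, in (ii), the inverse of $x+y$) are equivalent and slightly more elementary. In (iii) the paper proves surjectivity exactly as you do and then handles injectivity and the order in one stroke, by showing \eqref{K0map} is an order embedding: from $\ov{[p]}\leq\ov{[q]}$ and $[p]\ll[p]$ it gets $[p]+m[1]\leq[q]+m[1]$, replacing the auxiliary compact $g$ by a multiple of $[1]$ (possible because $[1]$ is an order unit among the compacts of $\Cu(\tilde A)$, using O5), and then passes to Murray--von Neumann comparison; you instead split this into kernel-triviality plus identification of positive cones, and you realize $g$ as a projection class via \cite[Proposition 5.7]{Brown-Ciuperca}, which is legitimate under the standing stable finiteness hypothesis though the paper's $m[1]$ trick avoids that appeal. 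Two points you should make explicit, both already available in the paper: compactness in $\Cusim(A)$ coincides with compactness in $\Cu_{\cc}(\tilde A)$ (established inside the proof of Theorem~\ref{CusimCu}), which you need when asserting that $w+n\ov{[1]}$ is compact in $\Cu_{\cc}(\tilde A)$ and when applying Lemma~\ref{ovone}; and your appeal to part (i) for compactness of $-\ov{[1]}$ happens in $\Cu_{\cc}(\tilde A)$ rather than $\Cusim(A)$, so argue instead that translation by $-\ov{[1]}$ is an order isomorphism (or rerun your O0/O3 argument there). Finally, in step (d) note that the projection representing a positive compact element has rank $0$, hence lies in $A\otimes\mathcal K$, so its class really defines an element of $V(A)$.
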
	

\begin{proof}
(i) Since compact elements form a subsemigroup containing $0$, it suffices to show that every compact element has a  compact additive inverse. Let $x\in \Cusim_{\co}(A)$.  By Theorem \ref{CusimCu} (ii), there exists $y\in \Cusim(A)$
such that $x+y=0$. Since $0\ll 0$, there exists $y'\ll y$ such that $x+y'=0$.  Then $x+y=0\ll 0=x+y'$. We conclude by weak cancellation (Corollary \ref{weakcancellation}) that $y=y'$. Hence,  $y$ is compact, as desired.

(ii) Suppose that $x+y$ is compact. Choose $x'\ll x$ such that $x'+y=x+y$. By weak cancellation,
we get that $x'=x$. Hence, $x$ is compact, and similarly, $y$ is compact.

(iii) Suppose that $A$ is stably finite. Let $x\in \Cusim(A)$ be compact. Since the relation $\ll$ in $\Cusim(A)$ is the restriction of the same relation taken in $\Cu_{\cc}(\tilde A)$, $x$ is compact in   $\Cu_{\cc}(\tilde A)$. Translation by $n[1]$ in $\Cu_{\cc}(\tilde A)$ is an order isomorphism. Hence, $x=\ov{e}-n\ov{[1]}$, where $\overline{e}\in \Cu_{\cc}(\tilde A)$ is compact and lifts to $e\in \Cu(\tilde A)$. By Lemma \ref{ovone}, there exists $p\in \tilde A\otimes\mathcal K$ such that $e=[p]$.  It follows  that $x=\ov{[p]}-n\ov{[1]}$, i.e., $x$ is in the range of the map \eqref{K0map}. 

Let us show that the map \eqref{K0map} is an order embedding.  Suppose that $\ov{[p]}-n\ov{[1]}\leq \ov{[q]}-\ov{n[1]}$, where $p,q\in \tilde A\otimes \mathcal K$ are projections  of rank $n$. Then $\ov{[p]}\leq \ov{[q]}$.  Since $[p]\ll [p]$,  this implies that  $[p]+m[1]\leq [q]+m[1]$ for some $m$. Cuntz subequivalence of projections agrees with Murray-von Neumann subequivalence. We thus get that $[p]_{MvN}+m[1]_{MvN}\leq [q]_{MvN}+m[1]_{MvN}$, which in turn implies that $[p]_{MvN}-n[1]_{MvN}\leq [q]_{MvN}-n[1]_{MvN}$, as desired.
\end{proof}

\subsection{Functionals}
Let $S$ be a positively ordered Cu-semigroup. A map $\lambda\colon S\to [0,\infty]$ is called a functional if it maps 0 to 0, it is additive, order preserving, and preserves the suprema of increasing sequences. It is called  densely finite  if $\lambda(x)<\infty$ whenever $x\ll y$ for some $y\in S$. If $S=\Cu(A)$, then the densely finite functionals are in bijective correspondence with the densely finite 2-quasitraces on $A_+$ as well as with the densely finite rank functions (\cite{Elliott-Robert-Santiago}).

Let $\lambda\colon \Cusim(A)_+\to [0,\infty]$ be a densely finite functional. 
Our goal is to extend it to all of $\Cusim(A)$. Let $x\in \Cusim(A)$. We know, by Theorem \ref{CusimCu} (i),  that there  exists $z\geq 0$ such that $x+z\geq 0$. Moreover,  since $0\ll 0$, we can choose  $z$ such that $z\ll z'$ for some $z'$.
Then $\lambda$ is finite on $z$. Let us define
\begin{equation}\label{lambdadef}
\lambda(x):=\lambda(x+z)-\lambda(z)\in(-\infty,\infty].
\end{equation}
Clearly, this is the only possible way to extend $\lambda$ additively to $\Cusim(A)$.

\begin{lemma}
Let $\lambda\colon \Cusim(A)_+\to [0,\infty]$ be a densely finite functional. The extension of $\lambda$ to $\Cusim(A)$ defined above is additive, order preserving, and preserves the suprema of increasing sequences. 
\end{lemma}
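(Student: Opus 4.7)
The plan is to verify each of the three claimed properties (additivity, monotonicity, and preservation of sequential suprema) by reducing it to the corresponding property of $\lambda$ on the positive cone $\Cusim(A)_+$, using a common ``translator'' $z\in\Cusim(A)$ to shift everything into the positive cone. By a \emph{translator} for $x$ I mean a $z\geq 0$ with $x+z\geq 0$ and $z\ll z'$ for some $z'$; the last condition, together with dense finiteness of $\lambda$, ensures $\lambda(z)<\infty$, so that \eqref{lambdadef} gives a well-defined value in $(-\infty,\infty]$. Existence of translators for any given $x$ is precisely what the paragraph preceding the lemma establishes from Theorem \ref{CusimCu}(i).

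The first thing to check is that \eqref{lambdadef} does not depend on the choice of translator. Given two translators $z_1,z_2$ for $x$, note that $z_1+z_2$ is also a translator (by O4 and compatibility of $+$ with $\leq$), and applying additivity of $\lambda$ on $\Cusim(A)_+$ to $(x+z_1)+z_2=(x+z_2)+z_1$ yields
\[
\lambda(x+z_1)+\lambda(z_2)=\lambda(x+z_2)+\lambda(z_1).
\]
Since $\lambda(z_1),\lambda(z_2)$ are finite, this rearranges to the required equality $\lambda(x+z_1)-\lambda(z_1)=\lambda(x+z_2)-\lambda(z_2)$.

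For additivity, pick translators $z_x$ for $x$ and $z_y$ for $y$; then $z_x+z_y$ is a translator for $x+y$. Additivity of $\lambda$ on positives gives both $\lambda(x+y+z_x+z_y)=\lambda(x+z_x)+\lambda(y+z_y)$ and $\lambda(z_x+z_y)=\lambda(z_x)+\lambda(z_y)$, and subtracting yields $\lambda(x+y)=\lambda(x)+\lambda(y)$. For monotonicity, if $x\leq y$ then any translator $z$ for $x$ is automatically a translator for $y$ (since $y+z\geq x+z\geq 0$), and $\lambda(x+z)\leq\lambda(y+z)$ on the positive cone gives $\lambda(x)\leq\lambda(y)$. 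For preservation of sequential suprema, given $(x_n)\uparrow x$, choose one translator $z$ for $x_1$; then it serves uniformly for every $x_n$ and for $x$. By O3 in $\Cusim(A)$ applied to the constant sequence $z$, we have $(x_n+z)\uparrow(x+z)$ inside $\Cusim(A)_+$. Supremum preservation of $\lambda$ on positives then gives $\lambda(x+z)=\sup_n\lambda(x_n+z)$, and subtracting the finite constant $\lambda(z)$ from both sides yields $\lambda(x)=\sup_n\lambda(x_n)$.

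I do not expect a serious obstacle beyond careful bookkeeping with possibly infinite values: with the conventions $\infty+c=\infty$ and $\infty-c=\infty$ for finite $c\geq 0$, every identity above remains valid because the translators themselves always satisfy $\lambda(z)<\infty$. The main conceptual content is simply that, once the single-translator reduction is set up, each property of the extension transfers directly from its counterpart on $\Cusim(A)_+$.
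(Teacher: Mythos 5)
Your proof is correct and follows essentially the same route as the paper: establish independence of the choice of the translating element $z$ via $\lambda(x+z)+\lambda(z')=\lambda(x+z+z')=\lambda(x+z')+\lambda(z)$, then transfer monotonicity, additivity, and supremum preservation from $\Cusim(A)_+$ by adding a single finite-valued $z$ and subtracting $\lambda(z)$. The only difference is that you spell out the additivity step, which the paper dismisses as ``handled similarly.''
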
		
\begin{proof}
Let 
\begin{equation}\label{defP}
P=\{z\in \Cusim(A)_+: z\ll z'\hbox{ for some $z'$}\}.
\end{equation}
Let $x\in \Cusim(A)$. Let us show first that $\lambda(x)$, defined as in \eqref{lambdadef}, is independent of the choice of $z$, as long as $x+z\geq 0$ and $z\in P$.
Suppose that $z'$ is another element with these properties. Then 
\[
\lambda(x+z)+\lambda(z')=\lambda(x+z+z')=\lambda(x+z')+\lambda(z).
\]	
Hence, $\lambda(x+z)-\lambda(z)=\lambda(x+z')-\lambda(z')$.

Suppose that $x\leq y$ in $\Cusim(A)$. Choose $z\in P$ such that  $x+z\geq 0$. 
Then $x+z\leq y+z$ and so 
\[
\lambda(x)=\lambda(x+z)-\lambda(z)\leq \lambda(y+z)-\lambda(z)=\lambda(y).
\] 
Thus, $\lambda$ is order preserving.

Let $(x_n)_n$ be an increasing sequence in $\Cusim(A)$. Choose $z$
such that $x_1+z\geq 0$ and $z\in P$. Then 
\[
\sup \lambda(x_i)=\sup\lambda(x_i+z)-\lambda(z)=\lambda(\sup x_i+z)-\lambda(z)=\lambda(\sup x_i).
\] 

Additivity of the extension of $\lambda$ is handled similarly. 
\end{proof}	

Let $\ov{\R}:=\R\cup\{\infty\}$. Let $F_0(\Cusim(A))$ denote the set of $\lambda\colon \Cusim(A)\to \ov{\R}$ that are additive, preserve 0, sequential suprema, and are densely finite. 
We endow $F_0(\Cusim(A))$  with the topology such that a net $(\lambda)_i$ converges to $\lambda$ if 
\[
\limsup \lambda_i(x)\leq \lambda(y)\leq \liminf \lambda_i(y) \hbox{ for all }0\leq x\ll y.
\]

Given $x\in \Cusim(A)$ we get a  function $\hat x\colon F_0(\Cusim(A))\to \ov{\R}$ defined by  $\hat x(\lambda)=\lambda(x)$ for all $\lambda\in F_0(\Cusim(A))$. 

\begin{lemma}\label{hatx}
The function $\hat x$ is linear and lower semicontinuous.
\end{lemma}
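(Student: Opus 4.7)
Linearity. $F_0(\Cusim(A))$ inherits a pointwise cone structure from addition and non-negative scalar multiplication in $\ov\R$, and from $(\lambda_1+c\lambda_2)(x)=\lambda_1(x)+c\lambda_2(x)$ one reads off $\hat x(\lambda_1+c\lambda_2)=\hat x(\lambda_1)+c\hat x(\lambda_2)$, which is linearity.

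Positive case of lower semicontinuity. For $\lambda_i\to\lambda$ and $x\geq 0$, I would use Lemma~\ref{ontoq} to write $x=q([a])$ with $[a]\in\Cu(A)$, then O2 in $\Cu(A)$ to pick a $\ll$-increasing sequence $[a_n]$ with $[a]=\sup_n[a_n]$. Lemma~\ref{ovCumap} together with Theorem~\ref{CusimCu} ensures that $x_n:=\ov{[a_n]}$ is a $\ll$-increasing sequence of positive elements of $\Cusim(A)$ with $x=\sup_n x_n$. For each $n$, $0\leq x_n\ll x_{n+1}$, so the convergence definition gives $\lambda(x_{n+1})\leq\liminf_i\lambda_i(x_{n+1})\leq\liminf_i\lambda_i(x)$; taking $\sup_n$ yields $\lambda(x)\leq\liminf_i\lambda_i(x)$.

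General case via refinement. For general $x\in\Cusim(A)$, I would reduce to the positive case using the extension formula $\lambda(x)=\lambda(x+z)-\lambda(z)$. Pick $z\in P\cap\Cusim(A)_+$ with $x+z\geq 0$ (Theorem~\ref{CusimCu}(i)), and apply the positive-case construction to $z$ itself to obtain a $\ll$-increasing sequence $(z_n)$ of positives in $\Cusim(A)$ with $z_n\ll z_{n+1}$ and $\sup_n z_n=z$; each $z_n$ then lies in $P$. The axiom $0\ll 0$ in $\Cusim(A)$ upgrades to $0\ll y$ for every $y\geq 0$ (for an increasing sequence with supremum $\geq y\geq 0$ the supremum is $\geq 0$, and O0 supplies an index at which the sequence is $\geq 0$); applied with $y=x+z$ to the approximation $(x+z_n)$, whose supremum is $x+z$ by O3, this yields $x+z_n\geq 0$ for all $n\geq n_0$ and some $n_0$.

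Concluding estimate. For $n\geq n_0$ the extension formula applies to each $\lambda_i$ via $z_n$, so $\lambda_i(x)=\lambda_i(x+z_n)-\lambda_i(z_n)$, with the subtracted term finite because $z_n\in P$ and $\lambda_i$ is densely finite. Combining the positive case at $x+z_n$ with the convergence definition applied to the pair $0\leq z_n\ll z_{n+1}$:
\[
\liminf_i\lambda_i(x)\geq\liminf_i\lambda_i(x+z_n)-\limsup_i\lambda_i(z_n)\geq\lambda(x+z_n)-\lambda(z_{n+1})=\lambda(x)-\bigl(\lambda(z_{n+1})-\lambda(z_n)\bigr).
\]
Sup-preservation of $\lambda$ gives $\lambda(z_n),\lambda(z_{n+1})\to\lambda(z)<\infty$, so the gap $\lambda(z_{n+1})-\lambda(z_n)$ tends to $0$. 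The main obstacle---producing a $z$-refinement whose extension gap shrinks while keeping $x+z_n\geq 0$---is precisely what the construction above achieves, and letting $n\to\infty$ yields $\liminf_i\lambda_i(x)\geq\lambda(x)$, completing lower semicontinuity.
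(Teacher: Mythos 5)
Your proof is correct and follows essentially the same route as the paper: extend $\lambda$ by $\lambda(x)=\lambda(x+z)-\lambda(z)$, replace $z$ by compactly contained approximants so that $\limsup_i\lambda_i$ of the approximant is controlled by $\lambda$ of something slightly larger, apply the extension formula to each $\lambda_i$, and pass to the limit using sup-preservation of $\lambda$. The only difference is presentational: the paper fixes $0\ll z'\ll z$ with $x+z'\geq 0$ and takes the supremum over such $z'$, whereas you run a $\ll$-increasing sequence $z_n\uparrow z$ and let the gap $\lambda(z_{n+1})-\lambda(z_n)$ tend to zero.
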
	
\begin{proof}
Let $\lambda_i\to \lambda$. Choose $z\in P$ such that $x+z\geq 0$ and $z\geq 0$, where $P$
is the set defined in \eqref{defP}. Choose $0\ll z'\ll z$ such that $x+z'\geq 0$ (it exists by the compactness of 0). Then 
\begin{align*}
\lambda(x+z')-\lambda(z) &\leq \liminf \lambda_i(x+z')-\limsup \lambda_i(z')\\
&=\liminf \lambda_i(x).
\end{align*}	
Passing to the supremum over all $z'\ll z$ on the left side we get that $\lambda(x)\leq \liminf \lambda_i(x)$. That is, $\hat x$ is l.s.c.
\end{proof}

Recall that the map $q\colon \Cu(A)\to \Cusim(A)_+$ defined as $q(x)=\ov{x}$ is an onto Cu-morphism (Lemma \ref{ontoq}). It follows that $\lambda\mapsto \lambda\circ q$ 
is an embedding  of $F_0(\Cusim(A))$ into $F_0(\Cu(A))$. The topology on $F_0(\Cusim(A))$ that we have defined above is precisely the one induced by the topology on $F_0(\Cu(A))$ via this embedding.

\begin{lemma}
Suppose that $A$ is $\sigma$-unital. Let $\lambda\colon \Cu(A)\to [0,\infty]$ be a functional that is finite on a full element of $\Cu(A)$. Then $\lambda$ factors through $q$, i.e.,
$\lambda=\tilde \lambda \circ q$, where $\tilde\lambda\in F_0(\Cusim(A))$.	
\end{lemma}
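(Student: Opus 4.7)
The plan is to define $\tilde\lambda$ on positives via $\tilde\lambda(q(x)) := \lambda(x)$, verify well-definedness (this is the heart of the matter), check that it is a functional on $\Cusim(A)_+$, and then invoke the preceding lemma to extend it to all of $\Cusim(A)$ as a densely finite functional.

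First, observe that the hypothesis that $\lambda$ is finite on a full element $z_0 \in \Cu(A)$ already forces $\lambda$ to be densely finite on $\Cu(A)$: if $x \ll y$ in $\Cu(A)$, then $y \le \infty\cdot z_0 = \sup_n n z_0$, so $x \le n z_0$ for some $n$, and hence $\lambda(x) \le n\lambda(z_0) < \infty$.

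The key step, and the main obstacle, is to show that $q(x) \le q(y)$ in $\Cusim(A)$ implies $\lambda(x) \le \lambda(y)$; this both yields well-definedness of $\tilde\lambda$ and its order-preservation. Here is where Proposition~\ref{almostcc} does all the work: assuming $q(x) \le q(y)$, for each $x' \ll x$ we can find $n \in \N$ and $y' \ll y$ with
\[
x' + n z_0 \le y' + n z_0.
\]
Applying $\lambda$ gives $\lambda(x') + n\lambda(z_0) \le \lambda(y') + n\lambda(z_0) \le \lambda(y) + n\lambda(z_0)$. Since $n\lambda(z_0) < \infty$, we may cancel to obtain $\lambda(x') \le \lambda(y)$, and passing to the supremum over $x' \ll x$ gives $\lambda(x) \le \lambda(y)$, as required. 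In particular, $q(x) = q(y)$ implies $\lambda(x) = \lambda(y)$, so $\tilde\lambda$ is well defined on $\Cusim(A)_+$ by Lemma~\ref{ontoq}.

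Additivity and preservation of $0$ of $\tilde\lambda$ are immediate from $q(x+y) = q(x) + q(y)$ and the corresponding properties of $\lambda$. For preservation of suprema of increasing sequences $(\tilde w_n)$ in $\Cusim(A)_+$, Theorem~\ref{suprema} produces an increasing sequence $(z_n)$ in $\Cu(\tilde A)$ with $\ov{z_n} \le \tilde w_n$ and $\ov{\sup z_n} = \sup \tilde w_n$; since each $\tilde w_n$ has rank zero, so does each $z_n$, i.e.\ $z_n \in \Cu(A)$. Then $\tilde\lambda(\sup \tilde w_n) = \lambda(\sup z_n) = \sup\lambda(z_n) = \sup \tilde\lambda(q(z_n)) \le \sup \tilde\lambda(\tilde w_n)$, and the reverse inequality is order-preservation. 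Finally, to check dense finiteness on $\Cusim(A)_+$, suppose $\tilde z \ll \tilde w$, and write $\tilde z = q(z)$, $\tilde w = q(w)$. Choose an $\ll$-increasing sequence $w_n \to w$ in $\Cu(A)$; since $q$ preserves suprema, $\tilde z \le q(w_n)$ for some $n$, with $w_n \ll w$. Proposition~\ref{almostcc} applied to $q(z) \le q(w_n)$ then gives, for each $z' \ll z$, an inequality $z' + m z_0 \le w_n + m z_0$, whence $\lambda(z') \le \lambda(w_n) < \infty$ (using dense finiteness of $\lambda$); taking $z' \to z$ yields $\tilde\lambda(\tilde z) = \lambda(z) < \infty$. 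An appeal to the preceding lemma extends $\tilde\lambda$ to all of $\Cusim(A)$, placing it in $F_0(\Cusim(A))$.
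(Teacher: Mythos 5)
Your proposal is correct and follows essentially the same route as the paper: define $\tilde\lambda(q(x))=\lambda(x)$, use Proposition~\ref{almostcc} together with finiteness of $\lambda$ on the full element to get well-definedness and order preservation, use Theorem~\ref{suprema} (with the rank-zero observation) for preservation of suprema, and then extend to all of $\Cusim(A)$ by the preceding extension lemma. Your explicit verifications of dense finiteness (both of $\lambda$ on $\Cu(A)$ and of $\tilde\lambda$ on $\Cusim(A)_+$) are correct details that the paper leaves implicit.
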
	
\begin{proof}
Let $z\in \Cu(A)$ be a full element such that $\lambda(z)<\infty$.	
Let us shows that if $x,y\in \Cu(A)$ are such that $\ov x\leq \ov y$ in $\Cusim(A)$ then $\lambda(x)\leq \lambda(y)$.
Let $x'\ll x$. 	By Proposition \ref{almostcc}, $x'+nz\leq y+nz$ for some $n$.  It follows that $\lambda(x')\leq \lambda(y)$. Passing to the supremum over all $x'\ll x$, we get that $\lambda(x)\leq \lambda(y)$. We can thus define $\tilde \lambda(\ov{x})=\lambda(x)$, for $x\in \Cu(A)$. Additivity, preservation of order,  and preservation of 0, are readily passed from $\lambda$ to $\tilde\lambda$. Let us show that $\tilde \lambda$ preserves sequential suprema: Suppose that $(x_n)_{n}$ is an increasing sequence in $\Cusim_+(A)$. By Theorem \ref{suprema}, there exists an increasing sequence $z_n\in \Cu(\tilde A)$ such that $\ov{z_n}\leq x_n$ for all $n$ and $\ov{\sup z_n}=\sup x_n$. Since $0\leq \rank(z_n)\leq \rank(x_n)=0$, we must have that $\rank(z_n)=0$ for all $n$, i.e., $z_n\in \Cu(A)$. Then 
\[
\tilde \lambda(\sup x_n)=\lambda(\sup z_n)=\sup \lambda(z_n)\leq \sup \tilde\lambda(x_n).
\]  
Hence,  $\tilde \lambda(\sup x_n)\leq \sup \tilde\lambda(x_n)$. The opposite inequality is 
straightforward. Thus, $\tilde\lambda$ preserves sequential suprema. We have thus defined 
a densely finite functional $\tilde\lambda$ on $\Cusim(A)_+$ such that $\lambda=\tilde\lambda q$. Extending it to $\Cusim(A)$ as in \eqref{lambdadef}, we obtain a functional in $F_0(\Cusim(A))$.
\end{proof}

Let $\lambda$ be a functional on $\Cu(A)$ meeting the assumptions of the previous theorem, so that it arises as $\lambda=\tilde\lambda \circ q$, for $\tilde\lambda\in F_0(\Cusim(A))$. Let 
$x\in \Cusim(A)$. Then we can define a pairing of $\lambda$ and $x$: 
\[
\langle \lambda, x\rangle := \tilde\lambda(x).
\]
If every densely functional on $\Cu(A)$ factors through $q$, then we can define this pairing 
on all $F_0(\Cu(A))\times \Cusim(A)$. This is the case when the primitive spectrum of $A$ is compact. We summarize the situation in the following theorem:  

\begin{theorem}\label{functsummary}
Let $A$ be a $\sigma$-unital C*-algebra with compact primitive spectrum. The following are true:
\begin{enumerate}[(i)]
\item
The densely finite functionals on $\Cusim(A)$ are in bijection with the densely finite functionals on $\Cu(A)$ via the map $\lambda\mapsto \lambda\circ q$.

\item
Given $x\in \Cusim(A)$ the function $\hat x\colon F_0(\Cu(A))\to \ov{\R}$ defined as 
\[
\hat x(\lambda)=\langle \lambda, x\rangle\hbox{ for all }\lambda\in F_0(\Cu(A))
\]
is additive and lower semicontinuous.
\end{enumerate}
\end{theorem}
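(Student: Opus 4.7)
The plan is to deduce part (ii) formally from part (i), so the substantive content is part (i)---showing that under the hypothesis of compact primitive spectrum, every densely finite functional on $\Cu(A)$ factors through the quotient map $q\colon \Cu(A)\to \Cusim(A)_+$. First I would observe that the map $\lambda\mapsto \lambda\circ q$ from $F_0(\Cusim(A))$ to $F_0(\Cu(A))$ is always injective: $q$ surjects onto $\Cusim(A)_+$ by Lemma \ref{ontoq}, and the extension formula \eqref{lambdadef} determines any element of $F_0(\Cusim(A))$ from its values on the positive cone. Thus the role of the compactness hypothesis on $\mathrm{Prim}(A)$ is purely to establish surjectivity.

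For surjectivity, by the preceding lemma it suffices to produce, for each densely finite $\lambda$ on $\Cu(A)$, a full element on which $\lambda$ is finite. I would take a strictly positive $b\in A$, so that $[b]\in \Cu(A)$ is full, and set $b_n=(b-1/n)_+$, producing a $\ll$-increasing sequence with $\sup_n [b_n]=[b]$. The open sets $U_n=\{P\in \mathrm{Prim}(A):b_n\notin P\}$ are increasing in $n$, and their union is all of $\mathrm{Prim}(A)$, since $b$ strictly positive means $b\notin P$ for every primitive ideal $P$. Compactness of $\mathrm{Prim}(A)$ together with the monotonicity of the $U_n$ then forces some $U_{n_0}$ to equal $\mathrm{Prim}(A)$; equivalently, the closed two-sided ideal generated by $b_{n_0}$ is $A$, so $[b_{n_0}]\in \Cu(A)$ is full. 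Since $[b_{n_0}]\ll [b]$ and $\lambda$ is densely finite, $\lambda([b_{n_0}])<\infty$, and the preceding lemma now produces $\tilde\lambda\in F_0(\Cusim(A))$ with $\tilde\lambda\circ q=\lambda$.

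For (ii), I would use (i) to identify $F_0(\Cusim(A))$ with $F_0(\Cu(A))$ as sets. As noted in the paragraph preceding the statement of the theorem, $\lambda\mapsto \lambda\circ q$ is always a topological embedding (indeed the topology on $F_0(\Cusim(A))$ was defined to make this so), so combined with the bijection from (i) it becomes a homeomorphism. Lemma \ref{hatx} gives additivity and lower semicontinuity of $\hat x$ on $F_0(\Cusim(A))$, and both properties transport along the homeomorphism to the corresponding function on $F_0(\Cu(A))$, whose value at $\lambda$ is by construction the pairing $\langle \lambda, x\rangle$. The main obstacle in the whole argument is the construction of a full element of $\Cu(A)$ that is way below another element; the delicate point is the monotonicity and exhaustion of the open sets $U_n$, both of which hinge on $b$ being strictly positive and on $b_n$ being increasing in $n$.
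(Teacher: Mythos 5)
Your proposal is correct and follows essentially the same route as the paper: reduce (i) to the preceding lemma by producing a full element of $\Cu(A)$ way below the class of a strictly positive element, using compactness of the primitive spectrum, and deduce (ii) from Lemma \ref{hatx} together with the agreement of the topologies on $F_0(\Cu(A))$ and $F_0(\Cusim(A))$. Your argument merely makes explicit (via the open sets $U_n$) the step the paper states without detail, namely that some $[b_{n_0}]\ll[b]$ is already full.
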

\begin{proof}
Let $z=[a]\in \Cu(A)$ be the class of a strictly positive element of $A$. Then $z$, is full, i.e,
$\infty \cdot z$ is the largest element of $\Cu(A)$. Choose a rapidly increasing sequence $(z_n)_n$ with supremum $z$. Since  the primitive spectrum of $A$ is compact, $z_n$ is full
for large enough $n$. We thus find $z_n$ that is full and such that every densely finite functional on $\Cu(A)$ is finite on $z_n$.  It follows that every densely finite functional on $\Cu(A)$ factors through $q$ by the previous lemma. This proves (i). (ii) follows from Lemma \ref{hatx} and the agreement of the topologies on $F_0(\Cu(A))$ and $F_0(\Cusim(A))$.
\end{proof}

\subsection{Simple pure C*-algebras}
A C*-algebra is said to be pure if its Cuntz semigroup is almost unperforated and almost divisible (\cite[Definition 3.6 (i)]{WinterPure}). Let us recall the definition of these properties: A $\Cu$-semigroup $S$ is called almost unperforated if $(k+1)x\leq ky$ implies  $x\leq y$ for all $x,y\in S$ and $k\in \N$. Almost unperforation is equivalent to the property of strict comparison, defined as follows: for all $x,y\in \Cu(A)$ and $\epsilon>0$ if $\lambda(x)\leq (1-\epsilon)\lambda(y)$ for all $\lambda\in F(S)$ then $x\leq y$. $S$ is called almost divisible if for all $x',x\in S$ with $x'\ll x$, and $n\in \N$, there exists $y\in S$ such that $ny\leq x$ and $x'\leq (n+1)y$. C*-algebras that tensorially absorb the Jiang-Su C*-algebra are pure (\cite{RordamZ}). 

Let us denote the cone of densely finite functionals $F_0(\Cu(A))$ simply by $Q$. Recall that $Q$ may also be regarded as the cone of densely finite 2-quasitraces on $A$. \emph{For the remainder of this section $A$ denotes a simple, separable, pure C*-algebra such that $Q\neq \{0\}$.} Our goal is to calculate $\Cusim(A)$. We accomplish this in  Theorem \ref{simpleCusim} below. This calculation, under the additional assumption that $A$ has stable rank one, is obtained in \cite{Robert}. We follow closely the same arguments, while circumventing the stable rank one assumption.

\begin{lemma}
$\Cusim(A)$ is simple in the sense that for every nonzero $z\in \Cusim(A)_+$ we have that $\infty\cdot z$ is the largest elements of $\Cusim(A)$.
\end{lemma}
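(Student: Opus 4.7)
The plan is to reduce the claim to the simplicity of $\Cu(A)$ itself, with only a small amount of bookkeeping to transfer things across the $q$-map. Let $z \in \Cusim(A)_+$ be nonzero. First I would apply Lemma \ref{ontoq} to write $z = \overline{x}$ for some $x \in \Cu(A)$. Since $\overline{0}=0$, the assumption $z \neq 0$ forces $x \neq 0$ in $\Cu(A)$. Because $A$ is simple, every nonzero element of $\Cu(A)$ is full, so $\infty \cdot x := \sup_n nx$ is the largest element of $\Cu(A)$.

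Next I would compute $\infty \cdot z$ inside $\Cusim(A)$. By Lemma \ref{ovCumap}, the assignment $x \mapsto \overline{x}$ preserves suprema of increasing sequences, so
\[
\infty \cdot z = \sup_n n\overline{x} = \sup_n \overline{nx} = \overline{\sup_n nx} = \overline{\infty \cdot x}.
\]

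To finish, I would show that an arbitrary $w \in \Cusim(A)$ is below $\infty \cdot z$. By Theorem \ref{CusimCu}(i) there exists $w_0 \geq 0$ in $\Cusim(A)$ with $w + w_0 \geq 0$, and since addition is compatible with the order we have $w \leq w + w_0$. Applying Lemma \ref{ontoq} again, $w + w_0 = \overline{y}$ for some $y \in \Cu(A)$. Fullness of $x$ in $\Cu(A)$ gives $y \leq \infty \cdot x$, and then by order-preservation of $q$,
\[
w \leq \overline{y} \leq \overline{\infty \cdot x} = \infty \cdot z.
\]
This exhibits $\infty \cdot z$ as the largest element of $\Cusim(A)$. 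I do not anticipate a genuine obstacle: the argument is essentially just the interplay of Lemma \ref{ontoq} (surjectivity of $q$ onto the positive cone), Lemma \ref{ovCumap} (that $q$ is a Cu-morphism on $\Cu(A)$), and axiom (i) of Theorem \ref{CusimCu} (every element has a positive translate).
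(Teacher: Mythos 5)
Your proof is correct and follows essentially the same route as the paper: the paper also deduces fullness of every nonzero element of $\Cu(A)$ from simplicity of $A$ and then pushes this through the surjection $q$ onto $\Cusim(A)_+$, merely stating that this ``readily implies'' the claim. Your write-up just makes the implicit final step explicit, using Theorem \ref{CusimCu}(i) to dominate an arbitrary (possibly non-positive) element by a positive one, which is exactly the right bookkeeping.
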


\begin{proof}
The subsets of $\Cu(A)$ that are closed under addition and sequential suprema are in bijection with the closed two-sided ideals of $A$ (\cite{CiupercaThesis}). Since $A$ is simple, the lemma is true for $\Cu(A)$: if $z\in \Cu(A)$ is nonzero then $\infty\cdot z$ is the largest element of $\Cu(A)$. But $\Cu(A)$ is mapped onto $\Cusim(A)_+$ by the Cu-morphism $q$ (Lemma \ref{ontoq}). This readily implies the same property for $\Cusim(A)$.
\end{proof}

Let us call an element $x\in \Cusim(A)$ soft if for each $x'\ll x$ there exists a nonzero $z\in \Cusim(A)_+$ such that $x'+z\leq x$. 
\begin{lemma}
The soft elements form an absorbing subsemigroup: if $x,y\in \Cusim(A)$ and $y$ is soft then $x+y$ is soft. 
\end{lemma}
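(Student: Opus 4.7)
The plan is to show softness of $x+y$ directly from the definition, reducing to softness of $y$ by splitting an arbitrary predecessor of $x+y$ into pieces lying strictly below $x$ and $y$ respectively.

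First I would fix $w'\ll x+y$ in $\Cusim(A)$ and produce $x'\ll x$ and $y'\ll y$ with $w'\le x'+y'$. This is where the Cu-semigroup axioms do the work: by O2 applied in $\Cusim(A)$ (Theorem \ref{CusimCu}) there are $\ll$-increasing sequences $(x_n)_n$ and $(y_n)_n$ with $x=\sup x_n$ and $y=\sup y_n$. By O4 the sequence $(x_n+y_n)_n$ is $\ll$-increasing, and by O3 its supremum is $x+y$. Hence $w'\le x_n+y_n$ for some $n$, and we may take $x'=x_n$ and $y'=y_n$.

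Next, since $y$ is soft and $y'\ll y$, by the definition of softness there exists a nonzero $z\in \Cusim(A)_+$ such that $y'+z\le y$. Adding $x'$ on both sides and using $x'\le x$ together with $w'\le x'+y'$ gives
\[
w'+z\le x'+y'+z\le x'+y\le x+y.
\]
Since $w'\ll x+y$ was arbitrary and $z$ is a nonzero positive element, this shows $x+y$ is soft.

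There is no real obstacle here; the argument is a direct unpacking of definitions. The only point worth noting is that $x$ and $y$ need not be positive in $\Cusim(A)$, but this causes no trouble because the axioms O2, O3, O4 hold in $\Cusim(A)$ without any positivity hypothesis (again by Theorem \ref{CusimCu}), and the element $z$ supplied by softness of $y$ is positive, so the inequality $w'+z\le x+y$ is well-defined and gives the required witness to softness of $x+y$.
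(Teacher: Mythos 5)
Your proof is correct and follows essentially the same route as the paper: drop a given $w'\ll x+y$ below $x+y'$ with $y'\ll y$, invoke softness of $y$ to get a nonzero positive $z$ with $y'+z\le y$, and add back to get $w'+z\le x+y$. The only cosmetic difference is that you approximate both summands via O2--O4, whereas the paper only needs to approximate $y$ (writing $w\le x+y'$ for some $y'\ll y$); both are fine.
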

\begin{proof}
If $w\ll x+y$
then $w\leq x+y'$ for some $y'\ll y$. Find $z\geq 0$ nonzero such that $y'+z\leq y$. Then $w+z\leq x+y'+z\leq x+y$.
\end{proof}

\begin{lemma}\label{softorcompact}
Each $x\in \Cusim(A)$ is either soft or compact and not both.
\end{lemma}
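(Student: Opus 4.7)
The plan is to handle mutual exclusivity first, then the remaining implication that every non-compact element is soft.

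For mutual exclusivity: if $x$ is both compact and soft, then $x \ll x$ permits the choice $x' = x$ in the softness hypothesis, yielding a nonzero $z \in \Cusim(A)_+$ with $x + z \leq x$. Combined with $x \leq x + z$, equality $x + z = x$ holds, so $x + z = x \ll x = x + 0$, and weak cancellation (Corollary \ref{weakcancellation}) forces $z \leq 0$; as $z \geq 0$ this gives $z = 0$, contradicting that $z$ was nonzero.

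For the main implication, suppose $x \in \Cusim(A)$ is not compact; given $x' \ll x$ I aim to build a nonzero $v \in \Cusim(A)_+$ with $x' + v \leq x$. Write $x = \ov{[a]} - n\ov{[1]}$ with $[a] \in \Cu(\tilde A)$ of rank $n$. Then $[a]$ is not compact in $\Cu(\tilde A)$, for otherwise Lemma \ref{ovCumap} would make $\ov{[a]}$ compact in $\Cu_{\cc}(\tilde A)$ and hence $x$ compact. Pick a $\ll$-increasing sequence $([a_k])_k$ with supremum $[a]$; since the rank is a Cu-morphism into $\ov\N$ and $n$ is compact in $\ov\N$, one has $\rank([a_k]) = n$ for $k$ sufficiently large. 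By Lemma \ref{ovCumap} the sequence $\ov{[a_k]} - n\ov{[1]}$ is $\ll$-increasing with supremum $x$, so $x' \leq \ov{[a_{k_0}]} - n\ov{[1]}$ for some large $k_0$. Applying O5 in $\Cu(\tilde A)$ to $[a_{k_0}] \ll [a_{k_0+1}] \ll [a]$ yields $d \in \Cu(\tilde A)$ with $[a_{k_0}] + d \leq [a] \leq [a_{k_0+1}] + d$. Non-compactness of $[a]$ excludes $d = 0$ (else $[a] \leq [a_{k_0+1}] \ll [a]$), and comparing ranks in $[a_{k_0}] + d \leq [a]$ forces $\rank(d) = 0$, so $d \in \Cu(A)$. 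Setting $v := \ov d \in \Cusim(A)_+$ then gives $x' + v \leq x$ by construction.

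The main obstacle is showing $v \neq 0$; this is the unique step where simplicity, pureness, and $Q \neq \{0\}$ enter. I argue by contradiction: assume $\ov d = 0$, i.e., $q(d) = 0$. Fix any nonzero $c \in A_+$ and small $\epsilon > 0$ and set $z := [(c-\epsilon)_+] \in \Cu(A)$: this element is full by simplicity and $z \ll [c]$, so every densely finite 2-quasitrace $\tau$ on $A$ is finite on $z$. Proposition \ref{almostcc} applied to $q(d) \leq q(0)$ then supplies, for each $d' \ll d$, some $m \in \N$ with $d' + mz \leq mz$. Evaluating on a faithful $\tau \in Q$—which exists because a simple C*-algebra with $Q \neq \{0\}$ admits a faithful densely finite 2-quasitrace—yields $d_\tau(d') = 0$, and faithfulness forces $d' = 0$. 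Since $d' \ll d$ is arbitrary, $d = 0$, contradicting the $d \neq 0$ established above.
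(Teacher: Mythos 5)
Your argument is correct, but it distributes the work quite differently from the paper. For the dichotomy the paper never leaves $\Cusim(A)$: given $x'\ll x''\ll x$, it invokes Theorem \ref{CusimCu}(ii) (applied to $x'\ll x''$, with $z:=x+w$ for the resulting $w$) to produce $z\geq 0$ with $x'+z\leq x\leq x''+z$; if such a $z$ can be taken to be $0$ then $x\leq x''\ll x$ is compact, and otherwise $x$ is soft. This uses no representatives, no rank bookkeeping, and none of the standing hypotheses on $A$. Your route instead pulls $x$ back to a representative $[a]\in\Cu(\tilde A)$, applies O5 there and counts ranks to get a nonzero complement $d\in\Cu(A)$, and then must rule out $q(d)=0$, which you do via Proposition \ref{almostcc} together with faithfulness of a nonzero densely finite functional; this is where separability, simplicity and $Q\neq\{0\}$ enter for you (the faithfulness you assert is indeed automatic: a nonzero functional vanishing on a full element vanishes identically). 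Conversely, for mutual exclusivity the paper argues with functionals ($x+z=x$ with $x$ compact forces $\lambda(z)=0$, simplicity makes $z$ full, so $\lambda=0$, contradicting $Q\neq\{0\}$), whereas your weak-cancellation argument ($z+x\ll 0+x$ gives $z\leq 0$ by Corollary \ref{weakcancellation}, hence $z=0$) is shorter and shows exclusivity holds with no hypotheses on $A$ at all. In short, both proofs are valid; the paper's dichotomy is hypothesis-free while its exclusivity uses $Q\neq\{0\}$, and yours is exactly the other way around, at the cost of a more computational descent to $\Cu(\tilde A)$.
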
	
\begin{proof}
Let $x'\ll x$. Choose $x''\in \Cusim(A)$ such that $x'\ll x''\ll x$. Find $z\geq 0$ such that $x'+z\leq x\leq x''+z$ (Theorem \ref{CusimCu} (ii)). If $z=0$ then $x=x''\ll x$, so $x$ is compact. If it is the case that $z\neq 0$ for all $x'\ll x$ then $x$ is soft.

If $x$ is both soft and compact, then $x+z=x$, with $z$ positive and nonzero. Since  densely finite functionals  are finite on compact elements, $\lambda(z)=0$ for any $\lambda\in Q$. By the simplicity of $\Cusim(A)$, $z$ is full. Hence $\lambda=0$, which contradicts our assumption that $Q\neq \{0\}$.
\end{proof}

Let $\mathrm{Lsc}(Q,\ov{\R})$ denote the set of functions $f\colon Q\to \ov \R$ that are lower semicontinuous, linear, and map 0 to 0. We have shown in Theorem \ref{functsummary} that $\hat x\in \mathrm{Lsc}(Q,\ov{\R})$ for every $x\in \Cusim(A)$. Let $\mathrm{Lsc}_{++}(Q)$ denote the
functions in  $\mathrm{Lsc}(Q,\ov{\R})$ that are strictly positive on the nonzero functionals.
By the simplicity of $\Cusim(A)$, if $x\in \Cusim(A)_+$ is nonzero then $\hat x\in \mathrm{Lsc}_{++}(Q)$.

We will make use of the calculation of $\Cu(A)$ (see \cite[Theorem 6.2]{Tikuisis-Toms}). Here are the main facts that we will need:
\begin{description}
\item[Fact 1]
Every element of $\Cu(A)$ is either compact or soft (same definition as above).
\item[Fact 2]
The map $x\mapsto \hat x$ is an isomorphism of the semigroup of soft elements in $\Cu(A)$ with 
$\mathrm{Lsc}_{++}(Q)$.
\end{description} 

\begin{lemma}
The map $x\mapsto \hat x$ is an ordered semigroup isomorphism from the set of positive soft elements in $\Cusim(A)_+$ to  $\mathrm{Lsc}_{++}(Q)$. 
\end{lemma}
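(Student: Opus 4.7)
The plan is to factor the map $x\mapsto\hat x$ through the Cu-morphism $q\colon\Cu(A)\to\Cusim(A)_+$ and reduce the claim to Fact~2. Concretely, I intend to show that $q$ restricts to an ordered-semigroup bijection between the soft elements of $\Cu(A)$ and the positive soft elements of $\Cusim(A)$; once this is in hand, composing with the isomorphism of Fact~2 delivers the result.

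The first ingredient is the compatibility of $x\mapsto\hat x$ with $q$: for $u\in\Cu(A)$ and $\lambda\in Q=F_0(\Cu(A))$, the lemma preceding Theorem~\ref{functsummary} writes $\lambda=\tilde\lambda\circ q$ with $\tilde\lambda\in F_0(\Cusim(A))$, whence $\widehat{q(u)}(\lambda)=\tilde\lambda(q(u))=\lambda(u)=\hat u(\lambda)$. The main obstacle is to show that $q$ carries soft elements to soft elements. For nonzero soft $u\in\Cu(A)$, Lemma~\ref{softorcompact} says $q(u)$ is soft or compact, so I only have to rule out compactness. Were $q(u)$ compact, choosing a rapidly increasing sequence $(u_n)$ with $\sup u_n=u$ and using $q(u)=\sup q(u_n)$ would force $q(u_{n_0})=q(u)$ for some $n_0$, and hence $\hat u=\widehat{u_{n_0}}$ on $Q$ by the displayed compatibility. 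But softness of $u$ supplies a nonzero $z\in\Cu(A)_+$ with $u_{n_0}+z\leq u$; since $A$ is simple, $z$ is full, so $\lambda(z)>0$ for every nonzero $\lambda\in Q$, while dense finiteness of $\lambda$ at $u_{n_0}\ll u$ gives $\lambda(u_{n_0})<\infty$. Adding these yields $\lambda(u_{n_0})<\lambda(u)$, contradicting $\hat u(\lambda)=\widehat{u_{n_0}}(\lambda)$ for any nonzero $\lambda$ (which exists because $Q\neq\{0\}$).

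The remaining steps are mostly bookkeeping. For surjectivity of $q$ onto positive soft elements, a soft $x\in\Cusim(A)_+$ lifts via Lemma~\ref{ontoq} to some $x''\in\Cu(A)$; by Fact~1 it is either soft or compact, and a compact $x''$ would make $q(x'')=x$ compact since $q$ preserves $\ll$, contradicting Lemma~\ref{softorcompact}. Injectivity on soft elements: $q(u)=q(v)$ with $u,v$ soft gives $\hat u=\hat v$ by compatibility, so Fact~2 forces $u=v$. Order and addition are preserved since $q$ is a Cu-morphism and each $\lambda$ is additive; the converse implication $\hat x\leq\hat y\Rightarrow x\leq y$ again passes through the triangle and Fact~2. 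Finally, to hit any $f\in\mathrm{Lsc}_{++}(Q)$, lift it via Fact~2 to a soft $u\in\Cu(A)$ with $\hat u=f$ and set $x=q(u)$: this $x$ is soft in $\Cusim(A)_+$ by what was just shown, and $\hat x=f$ by compatibility.
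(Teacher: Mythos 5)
Your proposal is correct and follows essentially the same route as the paper: both reduce the statement to showing that $q$ restricts to an ordered-semigroup isomorphism from the soft elements of $\Cu(A)$ onto the positive soft elements of $\Cusim(A)$, and then invoke Fact~2 together with the compatibility $\widehat{q(u)}=\hat u$ coming from the fact that every functional factors through $q$. The only minor difference is in checking that $q$ sends soft elements to soft elements: the paper argues directly that the complementing element $w$ satisfies $q(w)\neq 0$ (since $\hat w=0$ would force $w=0$ by strict comparison), whereas you rule out compactness of $q(u)$ by a functional estimate using the soft/compact dichotomy; both arguments are valid.
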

\begin{proof}
Recall that $\hat x=\widehat{q(x)}$ for all $x\in \Cu(A)$. Further, by Fact 2 above, $x\mapsto \hat{x}$ is an isomorphism from the soft elements of $\Cu(A)$ to $\mathrm{Lsc}_{++}(Q)$. It thus suffices to show that  $q\colon \Cu(A)\to \Cusim(A)_+$ is an ordered semigroup isomorphism from the subsemigroup of soft element of $\Cu(A)$ to the subsemigroup of positive soft elements in $\Cusim(A)$. We prove this next.

Suppose that $y\in \Cu(A)$ is soft. Let $x'\ll q(y)$. Choose $y'\ll y$ such that $x'\leq q(y')$ (recall that $q$ is a Cu-morphism). Choose a nonzero $w\geq 0$ such that $y'+w\leq y$. Then
$x'+q(w)\leq q(y)$. Further, we cannot have   $q(w)=0$, for otherwise $\hat w=0$, which implies that $w=0$ (by strict comparison of $\Cu(A)$). Thus, $q$ maps soft elements of $\Cu(A)$ to soft elements of $\Cusim(A)$.

To see that $q$ is an order embedding on the soft elements, recall that every functional on $\Cu(A)$ factors through $q$. Hence, $\widehat{q(y)}=\hat y$ for all $y\in \Cu(A)$. Further, $\Cu(A)\ni y\mapsto \hat y\in \mathrm{Lsc}_{++}(Q)$ is an order isomorphism on the soft elements (Fact 2 recalled above). Thus, $q$ is an order embedding. 

Finally, let us prove surjectivity: any $x\in \Cusim(A)_+$ has a lift in $\Cu(A)$.
If the lift is compact, then $x$ is compact, since $q$ preserves $\ll$. Thus, soft elements in $\Cusim(A)$ lift to soft elements in $\Cu(A)$.
\end{proof}

\begin{lemma}\label{softiso}
Let $\Cusim(A)_{\mathrm{sft}}$ denote the set of soft elements in $\Cusim(A)$. 
\begin{enumerate}[(i)]
\item
For all $x,y\in \Cusim(A)$, with $x$ soft, $\hat x\leq \hat y$ implies $x\leq y$.
\item
The map $x\mapsto \hat x$ is an isomorphism from $\Cusim(A)_{\mathrm{sft}}$ to $\mathrm{Lsc}(Q,\ov{\R})$.	
\end{enumerate}	
\end{lemma}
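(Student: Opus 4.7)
For part (i), my plan is to reduce the comparison to positive soft elements, where the preceding lemma supplies an ordered-semigroup isomorphism onto $\mathrm{Lsc}_{++}(Q)$. Given $x$ soft and $\hat x \leq \hat y$, I would choose a positive soft element $w \in \Cusim(A)_+$ such that $x + w \geq 0$ and $y + w \geq 0$; existence follows from Theorem~\ref{CusimCu}(i), augmented by adding a nonzero positive soft element $q([c])$ for $[c]\in\Cu(A)$ soft (available by almost divisibility). Since softness is absorbing under addition, both $x+w$ and $y+w$ are positive soft. The hypothesis gives $\widehat{x+w}\leq \widehat{y+w}$ in $\mathrm{Lsc}_{++}(Q)$, whence the order isomorphism of the previous lemma yields $x+w\leq y+w$.

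The delicate step is then to cancel $w$ and deduce $x\leq y$. I would pick $x'\ll x$ and show $x'\leq y$, then pass to the supremum. For $w'\ll w$, axiom~O4 gives $x'+w'\ll x+w\leq y+w=\sup_n(y+w_n)$ for a $\ll$-exhaustion $(w_n)$ of $w$, so there is $n_0$ with $x'+w'\leq y+w_{n_0}$. A careful application of axiom~O5, interpolating between $w'$ and $w_{n_0}$, then yields an element $z$ with $w'\ll z$ satisfying $x'+z\ll y+z$; weak cancellation (Corollary~\ref{weakcancellation}) now delivers $x'\leq y$. This cancellation is the main obstacle, as $\Cusim(A)$ does not admit general cancellation, and the $\ll$-inequality needed by weak cancellation must be extracted from the $\leq$-inequality by a subtle use of O5 and the $\ll$-approximation machinery of Cu-semigroups.

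For part (ii), injectivity and two-sided order preservation of $x\mapsto \hat x$ on $\Cusim(A)_{\mathrm{sft}}$ follow from (i) applied symmetrically: $\hat x=\hat y$ with both $x,y$ soft gives $x\leq y$ and $y\leq x$ via (i). For surjectivity onto $\mathrm{Lsc}(Q,\ov\R)$, given $f$ I would realize it as $\hat x$ by writing $f=\hat a-\hat c$ with $a$ positive soft and $c\in\Cusim_{\co}(A)_+$ compact: choose a compact $c$ (available via $K_0(A)\cong\Cusim_{\co}(A)$ by Theorem~\ref{Cusimcompacts}) large enough that $f+\hat c\in\mathrm{Lsc}_{++}(Q)\cup\{0\}$, invoke the previous lemma to obtain $a$ with $\hat a=f+\hat c$, and set $x:=a+(-c)$; since $a$ is soft and $-c$ is compact, $x$ is soft by Lemma~\ref{softorcompact} and the absorbing property, and $\hat x=f$. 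In cases where no such compact $c$ is available (for instance when $K_0(A)=0$), I would instead use the parameterization $x=\ov{[a]}-n\ov{[1]}$ of elements of $\Cusim(A)$, choosing a lift $[a]\in\Cu(\tilde A)$ of appropriate rank directly so that the resulting functional value matches~$f$.
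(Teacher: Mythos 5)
Your reduction to positive soft elements is fine, but the heart of part (i) is precisely the step you leave to ``a careful application of O5,'' and as sketched it does not work. From $x+w\leq y+w$ you pass to $x'+w'\leq y+w_{n_0}$ with $w'\ll w$ and $w_{n_0}$ from an exhaustion of $w$; this inequality is far too weak to recover $x'\leq y$ (already in $\ov{\Z}$, $5+0\leq 0+10$ shows that adding a small element on the left and a large one on the right carries no cancellation information), and no application of O5 will manufacture the needed relation $x'+z\ll y+z$ with the \emph{same} $z$ on both sides. Tellingly, your cancellation step never uses the softness of $x$, yet the statement is false without it (a nonzero compact $x$ and a soft $y$ with $\hat x=\hat y$ need not satisfy $x\leq y$). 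The paper's proof is arranged exactly to supply the missing $\ll$: the auxiliary element is not an arbitrary positive soft $w$ with $x+w\geq 0$, but one of a sequence $z_n$ with $\hat z_n$ \emph{continuous and finite-valued} (available by the previous lemma); then, for $x'\ll x''\ll x$, softness of $x$ gives a nonzero $v\geq 0$ with $x''+v\leq x$, the continuity and finiteness of $\hat z_n$ give $\hat z_n\ll \hat z_n+\hat v$ in $\mathrm{Lsc}_{++}(Q)$, hence $z_n\ll z_n+v$ in $\Cusim(A)$, and therefore $x'+z_n\ll x''+v+z_n\leq x+z_n\leq y+z_n$, at which point weak cancellation applies. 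Without controlling $\hat w$ (your $w$ from Theorem \ref{CusimCu}(i) could even satisfy $\hat w=\infty$ off $0$, making $x+w\leq y+w$ vacuous), the cancellation cannot be carried out.

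In part (ii), your surjectivity argument via $f=(f+n\hat c)-n\hat c$ with $c$ a nonzero positive compact element is a legitimate shortcut \emph{when such a $c$ exists}, and in that case it is simpler than the paper's construction. But the case you defer --- no nonzero projections, e.g.\ the Jacelon--Razak algebra $\mathcal W$, where $K_0(A)=0$ --- is exactly where the content lies, and ``choosing a lift $[a]\in\Cu(\tilde A)$ of appropriate rank so that the functional value matches $f$'' is not an argument: there is no reason a single difference $\ov{[a]}-n\ov{[1]}$ has prescribed pairing $-f$ against all functionals, and controlling this is the whole difficulty. The paper handles all cases uniformly by producing, for $f$ continuous, finite and strictly positive and each $\epsilon>0$, a soft $z$ with $-(1+2\epsilon)f\leq\hat z\leq-(1+\epsilon)f$ via the complement axiom (Theorem \ref{CusimCu}(ii) applied to $x+y\ll x+2y$ with $\hat x=f$, $\hat y=\epsilon f$), and then takes the supremum of the resulting sequence, using part (i) to know it is increasing in $\Cusim(A)$ and not merely at the level of functionals. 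You would need to supply an argument of this kind for the projectionless case before your proof of (ii) is complete.
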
	

\begin{proof}
(i) Let $x,y\in \Cusim(A)$ be  such that $\hat x\leq \hat y$ and $x$ is soft.
Choose a sequence of functions $f_n\in \mathrm{Lsc}_{++}(Q)$ that are finite-valued, continuous, and   with supremum the function equal to infinity on all $Q$ except 0. By the isomorphism of the set of positive soft elements in $\Cusim(A)$ with $\mathrm{Lsc}_{++}(Q)$, there exists an increasing sequence of positive soft elements $(z_n)_n$ in $\Cusim(A)_+$ such that $\hat z_n=f_n$. Since 
\[
x+\infty =y+\infty=\infty\geq 0
\] 
and  $0\in \Cusim(A)$ is compact, there exists $n$ such that $x+z_n\geq 0$ and $y+z_n\geq 0$. Set $z=z_n$. Then $\hat x+\hat z\leq \hat y+\hat z$. Since $x+z$ and $y+z$ are soft and positive, we have $x+z\leq y+z$ in $\Cusim_+(A)$.  Let $x'\ll x$. Let us show that $x'+z\ll x+z$. Choose $x''$ such that $x'\ll x''\ll x$. Since $x$ is soft, there exists  a nonzero $w\geq 0$ such that $x''+w\leq x$. Then $x''+w+z\leq x+z$. In $\mathrm{Lsc}_{++}(Q)$, we have  $\hat z\ll \hat w+\hat z$, since $\hat w$ is strictly positive  and $\hat z$ is continuous and finite. It follows that $z\ll z+w$ in $\Cusim(A)$. Hence, 
\[
x'+z\ll x''+(w+z)\leq x+z\leq y+z. 
\] 
By weak cancellation (Corollary \ref{weakcancellation}), $x'\leq y$.  Passing to the supremum over all $x'\ll x$, we conclude that $x\leq y$.

(ii) It is clear from (ii) that $x\mapsto \hat x$ is an order embedding. Let us prove surjectivity. We have already shown that the functions in $\mathrm{Lsc}_{++}(Q)$ are in the range of this map. Any function in  $\mathrm{Lsc}(Q, \ov{\R})$ can be expressed in the form $g-f$, where 
$f,g\in \mathrm{Lsc}_{++}(Q)$ and $f$ is continuous and finite.  (If  $h\in \mathrm{Lsc}(Q, \ov{\R})$, then $h$ is bounded from below  by lower semicontinuity and the compactness of $Q$. Choose $f_0\in \mathrm{Lsc}_{++}(Q)$  continuous. Then the desired decomposition is $h=(h+Cf_0)-Cf_0$, with $C>0$ sufficiently large.) It thus suffices to show that if $f\in \mathrm{Lsc}_{++}(Q)$ is continuous and finite  then $-f$ is in the range of the map $x\mapsto \hat x$.  Let $\epsilon>0$. Choose $x,y\in \Cusim(A)$ positive, soft, such that $\hat x=f$ and  $\hat y=\epsilon f$.  We have $(1+\epsilon)f\ll (1+2\epsilon)f$ in $\mathrm{Lsc}_{++}(Q)$ (since $f$ is continuous, finite, and strictly positive). Hence  $x+y\ll x+2y$. Using Theorem \ref{CusimCu} (ii), choose $w\in \Cusim(A)$ such that 
\[
x+y+w\leq 0\leq x+2y+w.
\] 
Set $z=y+w$. Then $z$ is soft, since $y$ is, and  
\[
-(1+2\epsilon)f\leq \hat z\leq -(1+\epsilon)f.
\] 
Applying the construction of $z$ for  $\epsilon_n=1/2^n$, with $n=1,2,\ldots$, we obtain a sequence $(z_n)_n$  of soft elements such that $\hat z_n$ is increasing
and has supremum $-f$. Since the $z_n$ are soft, the sequence is increasing in $\Cusim(A)$ (by (i)). Letting $z=\sup z_n$ we have $\hat z=-f$, as desired.
\end{proof}	

Let $\Cusim(A)_{\co}$ and $\Cusim(A)_{\mathrm{sft}}$ denote the subsemigroups of compact and soft elements respectively. By Lemma \ref{softorcompact}, $\Cusim(A)$ is the disjoint union of these two subsemigroups. By Theorem \ref{Cusimcompacts},  $\Cusim(A)_{\co}$ is isomorphic to $K_0(A)$ via the map \eqref{K0map}. By Lemma \ref{softiso},  $\Cusim(A)_{\mathrm{sft}}$ is isomorphic to  $\mathrm{Lsc}(Q,\ov{\R})$ via the map $x\mapsto \hat x$, where $Q=F_0(\Cu(A))$. Let us define a bijection
\[
\Cusim(A)\rightarrow  K_0(A)\sqcup \mathrm{Lsc}(Q,\ov{\R})
\]
by combining these two isomorphisms:
\begin{equation}\label{isomorphism}
\begin{aligned}
\ov{[p]}-\ov{[q]} &\mapsto [p]_{MvN}-[q]_{MvN}\in K_0(A)\hbox{ if }\ov{[p]}-\ov{[q]}\in \Cusim_{\co}(A)\\
x &\mapsto \hat x\in \mathrm{Lsc}(Q,\ov{\R})\hbox{ if }x\in \Cusim_{\mathrm{sft}}(A).
\end{aligned}
\end{equation}

We obtain a map from $K_0(A)$ to $\mathrm{Lsc}(Q,\ov{\R})$ by first regarding $x\in K_0(A)$
as a compact element in $x\in \Cusim(A)$ and then finding $\hat x\in \mathrm{Lsc}(Q,\ov{\R})$.
Let us continue to denote this map with a hat: $K_0(A)\ni x\mapsto \hat x\in \mathrm{Lsc}(Q,\ov{\R})$. We can now   endow $K_0(A)\sqcup \mathrm{Lsc}(Q,\ov{\R})$ with an order and an addition operation as follows:  On the sets $K_0(A)$ and $\mathrm{Lsc}(Q,\ov{\R})$, these are already defined. Let $x\in K_0(A)$ and $f\in \mathrm{Lsc}(Q,\ov{\R})$. We define 
\[
x+f:=\hat x+f\in \mathrm{Lsc}(Q,\ov{\R}).\] 
We define $f \leq x$ if $f\leq \hat x$ and  $x \leq f$ if $\hat x+h=f$ for some $h\in \mathrm{Lsc}_{++}(Q)$.

\begin{theorem}\label{simpleCusim}
Let $A$ be a simple, separable, pure C*-algebra such that $F_0(\Cu(A))\neq \{0\}$. Then the map defined in \eqref{isomorphism}  is an isomorphism of ordered semigroups.
\end{theorem}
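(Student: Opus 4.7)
The plan is to assemble the two isomorphisms already in hand---on compact elements via Theorem \ref{Cusimcompacts}(iii), and on soft elements via Lemma \ref{softiso}(ii)---into the single map \eqref{isomorphism} using the dichotomy of Lemma \ref{softorcompact}. As sets, $\Cusim(A)=\Cusim_{\co}(A)\sqcup \Cusim_{\mathrm{sft}}(A)$ and the codomain is $K_0(A)\sqcup \mathrm{Lsc}(Q,\ov{\R})$. The restriction of \eqref{isomorphism} to each piece is a bijection onto the corresponding piece, so \eqref{isomorphism} is a bijection.

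For addition, the ``compact $+$ compact'' and ``soft $+$ soft'' cases follow directly from the two restriction isomorphisms. In the mixed case, $x$ compact and $y$ soft, $x+y$ is soft by the absorbing property of soft elements, and by additivity of each densely finite functional we have $\widehat{x+y}=\hat{x}+\hat{y}$. This matches the convention $x+f:=\hat{x}+f$ on the target.

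For the order, the two ``same type'' cases are again immediate. For $f$ soft and $x$ compact, monotonicity of $\hat{\cdot}$ gives $f\leq x\Rightarrow \hat{f}\leq \hat{x}$, and Lemma \ref{softiso}(i) gives the reverse; this matches the target convention $f\leq x\Leftrightarrow f\leq \hat{x}$. For $x$ compact and $y$ soft, I would use that $x$ has an additive inverse $-x\in \Cusim_{\co}(A)$ by Theorem \ref{Cusimcompacts}(i), so translation by $-x$ is an order isomorphism and $x\leq y$ iff $0\leq y+(-x)$. Note that $y+(-x)$ is soft by absorbing, and nonzero (else $y=x$ would be compact, contradicting the dichotomy). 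Hence $y+(-x)$ is a nonzero positive soft element and corresponds under Lemma \ref{softiso}(ii) to some $h\in \mathrm{Lsc}_{++}(Q)$ with $h=\hat{y}-\hat{x}$, giving $\hat{x}+h=\hat{y}$. Conversely, given $h\in \mathrm{Lsc}_{++}(Q)$ with $\hat{x}+h=\hat{y}$, lift $h$ to a positive soft $s\in \Cusim(A)$ via Lemma \ref{softiso}(ii); then $x+s$ and $y$ are both soft and share the same image under $\hat{\cdot}$, so $x+s=y$ by Lemma \ref{softiso}(ii), whence $x\leq y$.

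The analytic core---the soft/compact dichotomy, almost unperforation and divisibility, and the parametrization of the positive soft part by $\mathrm{Lsc}_{++}(Q)$---has already been completed in the preceding lemmas, so no serious obstacle remains. The assembly above is a bookkeeping step mediated by two structural facts: soft absorbs compact, and compact elements have additive inverses. These two features are exactly what allow the compact and soft pieces of $\Cusim(A)$ to be matched cleanly against the two summands of $K_0(A)\sqcup \mathrm{Lsc}(Q,\ov{\R})$.
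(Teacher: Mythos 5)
Your proposal is correct and follows essentially the same route as the paper: assemble the two restriction isomorphisms (Theorem \ref{Cusimcompacts}(iii) and Lemma \ref{softiso}) via the soft/compact dichotomy, use the absorbing property of soft elements for mixed additivity, and use Lemma \ref{softiso} for the mixed-order cases. If anything, you are slightly more complete than the paper, since you also verify explicitly the order-preserving direction (compact $\leq$ soft forces $\hat y-\hat x\in\mathrm{Lsc}_{++}(Q)$, via translation by the additive inverse of the compact element), which the paper leaves implicit.
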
	

\begin{proof}
We have already shown that this map is bijective and an isomorphism when restricted both to the subsemigroups of compact elements and of soft elements. It remains to show that it is additive and an order embedding. 

Let $x,y\in \Cusim(A)$ with $x$ soft and $y$ compact. Then $x+y$ is soft. So the map is additive by the definition of addition on $K_0(A)\sqcup \mathrm{Lsc}(Q,\ov{\R})$. 

Suppose that the image of $x$ is less than or equal to the image of $y$, i.e., $\hat x\leq \hat y$. Then, by Lemma \ref{softiso}, $x\leq y$. 

Suppose on the other hand that the image of $y$ is less than the image of $x$. By the definition of the order in the codomain, this means that $\hat y+h=\hat x$, where  $h\in \mathrm{Lsc}_{++}(Q)$. Let $z\in \Cusim(A)_+$ be a soft element such that $\hat z = h$. Then $y+z$ is soft and $\hat y+\hat z=\hat x$. Hence, $y+z=x$, by Lemma \ref{softiso}. Since $z\geq 0$, we have that $y\leq x$. 
\end{proof}	

\begin{example}
Let $\mathcal W$	denote the Jacelon-Razak C*-algebra (\cite{Jacelon}).  Then $\Cu(\mathcal W)\cong [0,\infty]$ and $K_0(\mathcal W)=\{0\}$. Thus, $\Cusim(\mathcal W)\cong \{0\}\sqcup\ov{\R}$. Notice that the neutral element is $0\in \{0\}$ and not $0\in \ov{\R}$, which is soft.  
\end{example}

The calculation of $\Cusim(A)$ in Theorem \ref{simpleCusim} applies to simple $\mathcal Z$-stable, projectionless C*-algebras. Moreover, in this case $\Cusim(A)$ agrees with the original
construction in \cite{Robert}, since these C*-algebras have finite stable rank (see Subsection \ref{cforiginal}), as we now show.

\begin{theorem}
Let $A$ be a simple, projectionless,  $\mathcal Z$-stable C*-algebra. Then the stable rank of 
$A$ is at most two.
\end{theorem}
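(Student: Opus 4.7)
The plan is to invoke R{\o}rdam's dichotomy theorem for simple $\mathcal Z$-stable C*-algebras, which states that a simple, unital, $\mathcal Z$-stable C*-algebra is either purely infinite or has stable rank one.

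The first step is to rule out pure infiniteness. In a simple purely infinite C*-algebra, every nonzero hereditary subalgebra contains a nonzero projection (by Cuntz). Since $A$ is simple and projectionless, no such projection can exist, so $A$ cannot be purely infinite.

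If $A$ is unital, R{\o}rdam's dichotomy applied directly yields that the stable rank of $A$ equals one, and we are done. If $A$ is non-unital, we look at the split extension $0\to A\to \tilde A\to\mathbb C\to 0$ and apply the Rieffel-type inequality for stable rank in extensions to obtain $\mathrm{sr}(\tilde A)\le \mathrm{sr}_{\mathrm{nu}}(A)+\mathrm{sr}(\mathbb C)$, where $\mathrm{sr}_{\mathrm{nu}}(A)$ denotes the ``non-unital stable rank'' of $A$ (density of left-invertible pairs drawn from $A$ itself, not from $\tilde A$). A non-unital version of R{\o}rdam's dichotomy, obtained by reducing to the hereditary corners $\overline{hAh}$ for $h\in A_+$---each of which is again simple, projectionless, and, when $\sigma$-unital, $\mathcal Z$-stable, so inherits the unital dichotomy after a suitable unitization---shows that $\mathrm{sr}_{\mathrm{nu}}(A)\le 1$. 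Combined with $\mathrm{sr}(\mathbb C)=1$, this gives $\mathrm{sr}(\tilde A)\le 2$.

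The main obstacle is the non-unital case. R{\o}rdam's theorem is classically stated for unital algebras, and $\tilde A$ itself is not $\mathcal Z$-stable, so one cannot apply the dichotomy directly to it. Transferring the conclusion to $\tilde A$ requires either the non-unital version of the dichotomy (reached through hereditary corner approximations) or a direct argument exploiting the matrix divisibility of $\mathcal Z$ (in which one perturbs a pair $(x_1,x_2)\in\tilde A^2$ using a $2\times 2$ matrix block from an approximate copy of $M_2\subset\mathcal Z$ absorbed into $A$ to produce a left-invertible approximant). The extra ``$+1$'' coming from the unitization step is what forces the bound to read ``at most two'' rather than the sharper ``at most one'' that is actually true.
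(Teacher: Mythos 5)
Your reduction of the non-unital case does not work, and the non-unital case is the whole content of the theorem (the algebras of interest, such as $\mathcal W$, are stably projectionless). You propose to get a ``non-unital dichotomy'' by passing to hereditary corners $\overline{hAh}$ and then applying R{\o}rdam's unital theorem ``after a suitable unitization.'' But every hereditary subalgebra of a projectionless algebra is again non-unital (a unit would be a projection), and unitizing destroys $\mathcal Z$-stability, so the unital dichotomy can never be brought to bear this way; the step is circular and simply re-poses the original difficulty. The input that is actually needed here is a genuine theorem, namely \cite[Corollary 3.2]{RobertGlasgow}: a simple projectionless $\mathcal Z$-stable C*-algebra almost has stable rank one, i.e.\ $A\subseteq \overline{GL(\tilde A)}$. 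This is what the paper cites, and it is not a formal consequence of the unital dichotomy plus corner tricks.

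There is a second gap in the passage from $A$ to $\tilde A$. The quantity $\mathrm{sr}_{\mathrm{nu}}(A)$ you introduce (density of left-invertible tuples with entries drawn from $A$ itself) is vacuous: if $a_1,\dots,a_m\in A$ and $\sum_i b_ia_i=1$ in $\tilde A$, applying the character $\tilde A\to\mathbb C$ gives $0=1$, so no such tuple exists and your ``$\mathrm{sr}_{\mathrm{nu}}(A)\le 1$'' cannot hold as stated. Consequently there is no Rieffel-type inequality $\mathrm{sr}(\tilde A)\le \mathrm{sr}_{\mathrm{nu}}(A)+\mathrm{sr}(\mathbb C)$ to invoke; the standard extension estimates (via connected stable rank) only return the circular bound $\mathrm{sr}(\tilde A)\le \mathrm{sr}(\tilde A)$, since the stable rank of the ideal $A$ is by definition that of $\tilde A$. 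What replaces all of this in the paper is a short direct argument: given $(x_1,x_2)\in\tilde A^2$, perturb so the scalar parts are not both zero, multiply by an invertible scalar $2\times 2$ matrix to reduce to a pair of the form $(1+a_1,a_2)$ with $a_1,a_2\in A$, and then use almost stable rank one to replace $a_2$ by an invertible $\tilde a_2\in GL(\tilde A)$; the pair $(1+a_1,\tilde a_2)$ is left invertible with left inverse $(0,\tilde a_2^{-1})$. (Your alternative sketch via ``an approximate copy of $M_2\subset\mathcal Z$'' is also not available: $\mathcal Z$ contains no unital copy of $M_2$, only dimension-drop algebras or order-zero images of matrix algebras.) So both pillars of your argument---the non-unital dichotomy and the unitization inequality---need to be replaced, respectively, by the cited almost-stable-rank-one theorem and by the elementary row-reduction argument above.
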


\begin{proof}
By \cite[Corollary 3.2]{RobertGlasgow}, $A$ almost has stable rank one, in the sense that $A$ is contained in the closure of the invertible elements of $\tilde A$. Let us show that if $A$ almost has stable rank one  then it has stable rank at most two. Let $(\alpha_1\cdot 1+a_1,\alpha_2\cdot 1+a_2)\in \tilde A \times \tilde A$, where  $\alpha_1,\alpha_2\in \C$ and $a_1,a_2\in A$. Our goal is to show that they are in the closure of the left invertible pairs in  $\tilde A \times \tilde A$. By a small perturbation, we may assume without loss of generality  that $(\alpha_1,\alpha_2)\neq (0,0)$. Multiplying the vector $(\alpha_1\cdot 1+a_1,\alpha_2\cdot 1+a_2)$ by a suitable invertible scalar matrix, we may further assume that  $(\alpha_1,\alpha_2)=(1,0)$. The pair now has the form $(1+a_1,a_2)$, with $a_1,a_2\in A$. But, by the almost stable rank one property, $a_2$ is in the closure of the invertible elements in $\tilde A$. Let $\tilde a_2\in \tilde A$ be invertible. Then $(1+a_2,\tilde a_2)$ is left invertible, with left inverse $(0,\tilde a_2^{-1})$. This proves the theorem.
\end{proof}

\begin{bibdiv}
\begin{biblist}

\bib{Antoine-Perera-Thiel}{article}{
   author={Antoine, R.},
   author={Perera, F.},
   author={Thiel, H.},
   title={Tensor products and regularity properties of Cuntz semigroups},
   journal={Mem. Amer. Math. Soc.},
   volume={251},
   date={2018},
   number={1199},
   pages={viii+191},
   issn={0065-9266},
   isbn={978-1-4704-2797-9},
   isbn={978-1-4704-4282-8},
}

\bib{BRTTW}{article}{
	author={Blackadar, B.},
	author={Robert, L.},
	author={Tikuisis, A. P.},
	author={Toms, A. S.},
	author={Winter, W.},
	title={An algebraic approach to the radius of comparison},
	journal={Trans. Amer. Math. Soc.},
	volume={364},
	date={2012},
	number={7},
	pages={3657--3674},
}

\bib{Brown-Ciuperca}{article}{
author={Brown, N.~P.},
author={Ciuperca, A.},
title={Isomorphism of Hilbert modules over stably finite $C\sp*$-algebras},
journal={J. Funct. Anal.},
volume={257},
date={2009},
number={1},
pages={332--339},
}
	
\bib{CiupercaThesis}{thesis}{
author={Ciuperca, A},
title={Some properties of the Cuntz semigroup and an isomorphism theorem for a certain class of non-simple C∗-algebras}, 
organization = {University of Toronto}, 
type = {Ph. D. Thesis},	
date = {2008},
}

\bib{Ciuperca-Robert-Santiago}{article}{
   author={Ciuperca, A.},
   author={Robert, L.},
   author={Santiago, L.},
   title={The Cuntz semigroup of ideals and quotients and a generalized
   Kasparov stabilization theorem},
   journal={J. Operator Theory},
   volume={64},
   date={2010},
   number={1},
   pages={155--169},
}
	
	\bib{Coward-Elliott-Ivanescu}{article}{
		author={Coward, K. T.},
		author={Elliott, G. A.},
		author={Ivanescu, C.},
		title={The Cuntz semigroup as an invariant for $C\sp *$-algebras},
		journal={J. Reine Angew. Math.},
		volume={623},
		date={2008},
		pages={161--193},
	}

\bib{Linetal}{article}{
author={Elliott, G. A.},
author={Gong, G.},
author={Lin, Huaxin},
author={Niu, Z.},
title={The classification of simple separable KK-contractible C*-algebras with finite nuclear dimension},
eprint={https://arxiv.org/abs/1712.09463},
date={2018},
}

\bib{Elliott-Robert-Santiago}{article}{
   author={Elliott, G. A.},
   author={Robert, L.},
   author={Santiago, L.},
   title={The cone of lower semicontinuous traces on a $C^*$-algebra},
   journal={Amer. J. Math.},
   volume={133},
   date={2011},
   number={4},
   pages={969--1005},
}

\bib{Jacelon}{article}{
   author={Jacelon, B.},
   title={A simple, monotracial, stably projectionless $C^\ast$-algebra},
   journal={J. Lond. Math. Soc. (2)},
   volume={87},
   date={2013},
   number={2},
   pages={365--383},
}

\bib{Kirchberg-Rordam}{article}{
		author={Kirchberg, E.},
		author={R{\o}rdam, M.},
		title={Infinite non-simple $C\sp *$-algebras: absorbing the Cuntz
			algebras $\scr O\sb \infty$},
		journal={Adv. Math.},
		volume={167},
		date={2002},
		number={2},
		pages={195--264},
	}

	\bib{Rieffel}{article}{
		author={Rieffel, M. A.},
		title={Dimension and stable rank in the $K$-theory of
			$C\sp{\ast}$-algebras},
		journal={Proc. London Math. Soc. (3)},
		volume={46},
		date={1983},
		number={2},
		pages={301--333},
	}

\bib{Robert}{article}{
	author={Robert, L.},
	title={Classification of inductive limits of 1-dimensional NCCW
		complexes},
	journal={Adv. Math.},
	volume={231},
	date={2012},
	number={5},
	pages={2802--2836},
}

\bib{Robert2}{article}{
	author={Robert, L.},
	title={The Cuntz semigroup of some spaces of dimension at most two},
	language={English, with English and French summaries},
	journal={C. R. Math. Acad. Sci. Soc. R. Can.},
	volume={35},
	date={2013},
	number={1},
	pages={22--32},
}

\bib{RobertGlasgow}{article}{
   author={Robert, Leonel},
   title={Remarks on $\mathcal{Z}$-stable projectionless $\rm C^*$-algebras},
   journal={Glasg. Math. J.},
   volume={58},
   date={2016},
   number={2},
   pages={273--277},
}

	\bib{RordamZ}{article}{
		author={R{\o}rdam, M.},
		title={The stable and the real rank of $\scr Z$-absorbing $C\sp*$-algebras},
		journal={Internat. J. Math.},
		volume={15},
		date={2004},
		number={10},
		pages={1065--1084},
	}

	\bib{Rordam-Winter}{article}{
		author={R{\o}rdam, M.},
		author={Winter, W.},
		title={The Jiang-Su algebra revisited},
		journal={J. Reine Angew. Math.},
		volume={642},
		date={2010},
		pages={129--155},
	}

\bib{Tikuisis-Toms}{article}{
author={Tikuisis, A. P.},
author={Toms, A.},
title={On the structure of Cuntz semigroups in (possibly) nonunital $\rm C^*$-algebras},
journal={Canad. Math. Bull.},
volume={58},
date={2015},
number={2},
pages={402--414},
}

\bib{WinterPure}{article}{
author={Winter, W.},
title={Nuclear dimension and $\scr{Z}$-stability of pure $\rm C\sp*$-algebras},
journal={Invent. Math.},
volume={187},
date={2012},
number={2},
pages={259--342},
}

\end{biblist}
\end{bibdiv}

\end{document}